\numberwithin{equation}{section}
\newtheorem{theorem}{Theorem}[section]
\newtheorem{definition}[theorem]{Definition}
\newtheorem{lemma}[theorem]{Lemma}
\newtheorem{corollary}[theorem]{Corollary}
\newtheorem{remark}[theorem]{Remark}
\newtheorem{proposition}[theorem]{Proposition}
\newtheorem{problem}[theorem]{Problem}
\title[The $p$-th dual Minkowski problem for $k$-torsional rigidity]{The $p$-th dual Minkowski problem for the $k$-torsional rigidity corresponding to a $k$-Hessian equation}
\author{Xia Zhao and Peibiao Zhao}
\thanks{2020 Mathematics Subject Classification:  52A20 \ \ 35K96\ \ 58J35.}
\keywords{$k$-Hessian equation; $k$-torsional rigidity; curvature flow; dual Minkowski problem}
\begin{document}
\begin{abstract}
The study of the dual curvature measures [Y. Huang, E. Lutwak, D. Yang \& G. Y. Zhang, Acta. Math. 216 (2016): 325-388], which connects the cone-volume measure and Aleksandrov's integral curvature, and  has created a precedent for the theoretical research of the dual Brunn-Minkowski theory.

Motivated by the foregoing groundbreaking works, the present paper introduces the $p$-th dual $k$-torsional rigidity associated with a $k$-Hessian equation and establishes its Hadamard variational formula with $1\leq k\leq n-1$, which induces the $p$-th dual $k$-torsional measure. Further, based on the $p$-th dual $k$-torsional measure, this article, for the first time, proposes the $p$-th dual Minkowski problem of the $k$-torsional rigidity which can be equivalently converted to a nonlinear partial differential equation in smooth case:
\begin{align}\label{eq01}
f(x)=\tau(|\nabla h|^2+h^2)^{\frac{p-n}{2}}h_{\Omega}(x)|Du(\nu^{-1}_\Omega(x))|^{k+1}\sigma_{n-k}(h_{ij}(x)+h_\Omega(x)\delta_{ij}),
\end{align}
where $\tau>0$ is a constant, $f$ is a positive smooth function defined on $S^{n-1}$ and $\sigma_{n-k}$ is the $(n-k)$-th elementary symmetric function of the principal curvature radii. We confirm the existence of smooth non-even solution to the $p$-th dual Minkowski problem of the $k$-torsional rigidity for $p<n-2$ by the method of a curvature flow which converges smoothly to the solution of equation (\ref{eq01}). Specially, a novel approach for the uniform lower bound estimation in the $C^0$ estimation for the solution to the curvature flow is presented with the help of invariant functional $\Phi(\Omega_t)$. 
\end{abstract}
\maketitle
\vskip 20pt
\section{Introduction and main results}

The characterizing area measure $S_k(\Omega,\cdot)$ problem is referred to as the Christoffel-Minkowski problem: For a given integer $1\leq k\leq n-1$ and a finite Borel measure $\mu$ on an unit sphere $S^{n-1}$, what are the necessary and sufficient conditions such that $\mu$ is equal to the area measure $S_{k}(\Omega,\cdot)$ of a convex body. When $k=1$, it is the Christoffel problem which was once independently solved by Firey \cite{FI1} and Berg \cite{BE}. The case of $k=n-1$, the Christoffel-Minkowski problem is just the classical Minkowski problem: Given a non-zero finite Borel measure $\mu$ on $S^{n-1}$, under what the necessary and sufficient conditions on $\mu$, does there exist an unique convex body $\Omega$ such that the given measure $\mu$ is equal to the surface area measure $S(\Omega,\cdot)$? For $1<k<n-1$, it is a difficult and long-term open problem. Some important progress of the Christoffel-Minkowski problem was obtained by Guan and Guan \cite{GB} and Guan and Ma \cite{GM}, as well as \cite{GP, HC, ZR} and the other relevant references.

The $L_p$ form of the Minkowski problem is called the $L_p$ Minkowski problem which is posed by Lutwak \cite{LE0} with $p>1$. The $L_p$ Minkowski problem contains some special versions, when $p=1$, it is the classical Minkowski problem, the famous log-Minkowski problem \cite{BO} with $p=0$, and $p=-n$, it is the centro-affine Minkowski problem \cite{ZG}. Moreover, the solution of the $L_p$ Minkowski problem plays a key role in establishing the $L_p$ affine Sobolev inequality \cite{HC2, LE01}. Haberl, Lutwak, Yang and Zhang \cite{HC1} proposed and studied the even Orlicz Minkowski problem in 2010 which is a more generalized form because the Orlicz Minkowski problem is the classical Minkowski problem with $\varphi(s)=s$ and the $L_p$ Minkowski problem with $\varphi(s)=s^{1-p}$.

Recently, a very important pioneering work was born, Huang, Lutwak, Yang and Zhang \cite{HY} introduced the $q$-th dual curvature measure $\widetilde{C}_q(\Omega,\cdot)$ and the dual Minkowski problem, where the dual Minkowski problem can be stated below: Given a nonzero finite Borel measure $\mu$ on $S^{n-1}$, what are the necessary and sufficient conditions for the existence of a convex body $\Omega$ in $\mathbb{R}^n$ such that $\mu=\widetilde{C}_q(\Omega,\cdot)$. Two special cases of the dual Minkowski problem include the log-Minkowski problem for $q=n$ and the Aleksandrov problem when $q=0$. This work of \cite{HY} is a major development for the dual Brunn-Minkowski theory which would prompt scholars to study the dual Minkowski problem of various measures.

With the continuous development and enrichment of the Minkowski problems and their dual analogues, the Minkowski problem has inspired many other problems of a similar nature. Examples include the capacity Minkowski type problems which relates to the solution of boundary values problems \cite{JE, XJ, CO}, the Gaussian Minkowski problem \cite{HY1,LJQ}, the chord Minkowski problem \cite{LE02,ZX0}. In this article, we focus on the relevant Minkowski problem of the torsional rigidity which is related to the solution of boundary value problems. Among them, the torsional rigidity is essentially equivalent to the existence of a solution to the Laplace equation, while the $q$-torsional rigidity is essentially equivalent to the existence of a solution to the $q$-Laplace equation. In addition,
the value of this functional quantitatively describes the comprehensive ability of an object's cross-section to resist torsional deformation and store torsional strain energy when the internal stress reaches a mechanical equilibrium state under the action of torque. It profoundly reveals how the geometric shape of an object ultimately determines its macroscopic mechanical properties through a classical partial differential equation.
For convenience, we here only state the definition of the $q$-torsional rigidity. Let $\mathcal{K}^n$ be the collection of convex bodies in Euclidean space $\mathbb{R}^n$. The set of convex bodies containing the origin in their interiors in $\mathbb{R}^n$, we write $\mathcal{K}^n_o$. Moreover,  let $C^2_{+}$ be the class of convex bodies of $C^2$ with a positive Gauss curvature at the boundary. Let $\Omega\in\mathcal{K}^n$, the $q$-torsional rigidity $T_q(\Omega)$ \cite{CA1} with $q>1$ is defined by
\begin{align*}
\frac{1}{T_q(\Omega)}=\inf\bigg\{\frac{\int_{\Omega}|\overline{\nabla} U|^q dy}{[\int_{\Omega}|U|dy]^q}:U\in W_0^{1,q}(\Omega),\int_{\Omega}|U|dy>0\bigg\}.
\end{align*}
It is illustrated in \cite{BE0,HH} that the above functional has an unique minimizer $u\in W_0^{1,q}(\Omega)$ satisfying the following boundary value problem
\begin{align*}
\left\{
\begin{array}{lc}
\Delta_qu=-1\ \ \  \text{in} \ \ \ \Omega,\\
u=0,\ \ \ \ \ \ \ \ \text{on} \ \ \ \ \partial\Omega,\\
\end{array}
\right.
\end{align*}
where $$\Delta_qu\hat{=}{\rm div}(|\overline{\nabla} u|^{q-2}\nabla u)$$
is the $q$-Laplace operator.

Applying the integral by part to the $q$-Laplace equation, with the aid of Poho$\check{\textrm{z}}$aev-type identities \cite{PU}, the integral formula of $q$-torsional rigidity can be given by
\begin{align*}
T_q(\Omega)^{\frac{1}{q-1}}=&\frac{q-1}{q+n(q-1)}\int_{S^{n-1}}h(\Omega,x)d\mu^{tor}_{q}(\Omega,x)\\
\nonumber=&\frac{q-1}{q+n(q-1)}\int_{S^{n-1}}h(\Omega,x)|\overline{\nabla} u|^qdS(\Omega,x).
\end{align*}

When $q=2$, $T_q(\Omega)$ is the so-called torsional rigidity $T(\Omega)$ of a convex body $\Omega$ whose Minkowski problem was firstly studied by Colesanti and Fimiani \cite{CA1}. The Minkowski problem for the torsional rigidity was extended to the $L_p$ version by Chen and Dai \cite{CZM} who proved the existence of solutions for any fixed $p>1$ and $p\neq n+2$, Hu and Liu \cite{HJ01} for $0<p<1$. Li and Zhu \cite{LN} first developed and proven the Orlicz Minkowski problem $w.r.t.$ the torsional rigidity by the variational method and Hu, Liu and Ma \cite{HJ} obtained the smooth solution for this problem by a Gauss curvature flow. Huang, Song and Xu \cite{HY0} established the $L_p$ variational formula for the $q$-torsional rigidity with $q>1$. Hu and Zhang \cite{HJ2} established the functional Orlicz-Brunn-Minkowski inequality for the $q$-torsional rigidity. Following the work of Hu and Zhang in \cite{HJ2}, Zhao et al in \cite{ZX} have had a systematic investigation on this topic and proposed the Orlicz Minkowski problem for the $q$-torsional rigidity with $q>1$ and obtained its smooth non-even solutions by method of a Gauss curvature flow. Moreover, the authors further  in \cite{ZX1} have also posed and studied the $p$-th dual Minkowski problem for the $q$-torsional rigidity with $q>1$ and obtained the existence of smooth even solutions for $p<n(p\neq 0)$ and smooth non-even solutions for $p<0$ by the method of a Gauss curvature flow.

In the present paper, we will extend the dual Minkowski problem of the $q$-torsional rigidity (associated with a $q$-Laplace equation) to the dual Minkowski problem of the $k$-torsional rigidity which is related equivalently to solutions of a $k$-Hessian equation instead of the $q$-Laplace equation. It is believed that this research will contribute to the enrichment and development for the $k$-torsional rigidity in the dual Brunn-Minkowski theorey. Now, we recall and state firstly the concept of the $k$-torsional rigidity and its related contents as follows. We consider a  $k$-Hessian equation below:
\begin{align}\label{eq101}
\left\{
\begin{array}{lc}
S_k(D^2u)=1\ \ \  \text{in} \ \ \ \Omega,\\
u=0,\ \ \ \ \ \ \ \ \text{on} \ \ \ \ \partial\Omega,\\
\end{array}
\right.
\end{align}
where $\Omega$ is a bounded convex domain of $\mathbb{R}^n$ and $S_k(D^2u)$ is the $k$-elementary symmetric function of the eigenvalues of $D^2u$, $k\in\{1,\cdots,n\}$.

Notice that, when $k=1$ in (\ref{eq101}), it is the Laplace equation, while $k=n$ in (\ref{eq101}), it is the well-known Monge-Amp\`{e}re equation. For $k\geq 2$, the $S_k$ operator is fully nonlinear and it is not elliptic unless when it is restricted to a suitable class of admissible functions, the so-called $k$-convex functions (see Section \ref{sec2} for more details).

Next, we introduce the functional $T_k$ related to the equation (\ref{eq101}) which can be defined as follows (see \cite{Sa}):
\begin{align}\label{eq102}
\frac{1}{T_k(\Omega)}=\inf\bigg\{\frac{-\int_{\Omega} wS_k(D^2w) dy}{[\int_{\Omega}|w|dy]^{k+1}}:w\in \Phi _{k}^0(\Omega)\bigg\},
\end{align}
where $\Phi _{k}^0(\Omega)$ is the set of admissible functions that vanish on the boundary.

Note that $S_1(D^2u)=\Delta u$ and the functional $T(\Omega)$ related to $\Delta u$ is called the torsional rigidity of $\Omega$ which is defined by Colesanti \cite{CA0}, for this reason, $T_k(\Omega)$ is called the $k$-torsional rigidity of $\Omega$.

Consider the functional
\begin{align*}
J(w)=\frac{1}{k+1}\int_{\Omega}(-w)S_k(D^2w) dy-\int_{\Omega}w dy.
\end{align*}
From the works of Wang \cite{WXJ,WXJ1}, we know that $J$ has a minimizer $u\in \Phi _{k}^{0}(\Omega)$ which solves (\ref{eq101}) and also minimizers the quotient in (\ref{eq102}). Then from (\ref{eq101}) and Poho$\check{\textrm{z}}$aev identity \cite[Proposition 3 in Appendix A]{BR}, the $k$-torsional rigidity can be directly calculated as
\begin{align*}
T_k(\Omega)=\bigg(\frac{1}{k(n+2)}\int_{S^{n-1}}h(\Omega,x)|Du(\nu_\Omega^{-1}(x))|^{k+1}dS_{n-k}(\Omega,x)\bigg)^k.
\end{align*}
Denote $\widetilde{T}_k(\Omega)=(T_k(\Omega))^{\frac{1}{k}}$, namely,
\begin{align*}
\nonumber\widetilde{T}_k(\Omega)=\frac{1}{k(n+2)}\int_{S^{n-1}}h(\Omega,x)|Du(\nu_\Omega^{-1}(x))|^{k+1}dS_{n-k}(\Omega,x),
\end{align*}
where $u$ is the solution of (\ref{eq101}) on $\Omega$, $h(\Omega,\cdot)$ is the support function of $\Omega$, $\nu_\Omega$ is the Gauss map of $\partial\Omega$ (then $\nu_\Omega^{-1}(x)$ is the point on $\partial\Omega$ where the outer normal direction is $x$) and $S_{n-k}(\Omega,\cdot)$ denotes the $(n-k)$-th area measure of $\partial\Omega$. In particularly, when $k=1$, $S_{n-1}(\Omega,\cdot)=S(\Omega,\cdot)$ is just the classical surface area measure and $T(\Omega)$ is the torsional rigidity of $\Omega$. From the theory of convex bodies and differential geometry (see for example \cite{SC} and \cite{UR}), we see in this case that
\begin{align}\label{eq103}
dS_{n-k}(\Omega,x)=\sigma_{n-k}(h_{ij}+h\delta_{ij})dx,\quad x\in S^{n-1},
\end{align}
where $dx$ is the Lebesgue measure on $S^{n-1}$, $h_{ij}$ is the second covariant derivative of $h$ with respect to the local orthonormal frame $\{e_1,e_2,\cdots,e_{n-1}\}$ on $S^{n-1}$ and $\sigma_{n-k}(h_{ij}+h\delta_{ij})$ is the $(n-k)$-th elementary symmetric function of the eigenvalues of $(h_{ij}+h\delta_{ij})$ and $\delta_{ij}$ is the Kronecker delta. Thus
\begin{align}\label{eq104}
\widetilde{T}_k(\Omega)=\frac{1}{k(n+2)}\int_{S^{n-1}}h(\Omega,x)|Du(\nu_\Omega^{-1}(x))|^{k+1}\sigma_{n-k}(h_{ij}+h\delta_{ij})dx.
\end{align}
We notice that $T_k:\mathbb{R}^n\rightarrow \mathbb{R}_{+}$ is a positively homogeneous operator of degree $(n+2)k$.

Motivated by the works of the dual curvature measure and the dual Minkowski problem in \cite{HY} and the work  of the Minkowski problem to the $k$-torsional rigidity \cite{ZX2}, we focus on in the present paper considering the $p$-th dual Minkowski problem for the $k$-torsional rigidity with $1\leq k\leq n-1$ in the dual Brunn-Minkowski theory. Firstly, we give the definition of the $p$-th dual $k$-torsional measure.

\begin{definition}\label{def11} Let $\Omega\in\mathcal{K}^n_o$, $1\leq k\leq n-1$ and $p\in\mathbb{R}$. We define the $p$-th dual $k$-torsional measure in the following table:
\begin{table}[h!]
\centering
\caption{The case of different $p$ to the $p$-th dual $k$-torsional measure}
\begin{tabular}{ |m{1.5cm}|m{12cm}| c |}
      \hline
      \thead{$p\neq n$} & \thead{$\widetilde{Q}_{k,n-p}(\Omega,\eta)=\frac{1}{n-p}\int_{\alpha_\Omega^*(\eta)}\rho^{p+1-k}_{\Omega}(v)|Du(r_\Omega(v))|^{k+1}dv$} \\
      \hline
      \thead{$p=n$} &  \makecell{$\widetilde{Q}_{k,0}(\Omega,\eta)=\lim_{p\rightarrow n}\widetilde{Q}_{k,n-p}(\Omega,\eta)$ \ \ \ \ \ \ \ \ \ \ \ \ \ \ \ \ \ \ \ \ \ \ \ \ \ \ \ \ \ \ \ \\ $\ \ \ \ \ \ \ \ \ \ \ \ =\int_{\alpha_\Omega^*(\eta)}\log\rho_{\Omega}(v)\rho^{n+1-k}_{\Omega}(v)|Du(r_\Omega(v))|^{k+1}dv$}\\
      \hline
    \end{tabular}
\end{table}

for each Borel $\eta\subset S^{n-1}$ and $r(\Omega,v)=\rho(\Omega,v)v$, $\rho(\Omega,\cdot)$ is the radial function of $\Omega$, $\alpha_\Omega^*$ is the reverse radial Gauss image on $S^{n-1}$ and $dv$ is the spherical measure on $S^{n-1}$ (see Definition \ref{def34} for details).
\end{definition}

Naturally, the $p$-th dual $k$-torsional rigidity $\widetilde{Q}_{k,{n-p}}(\Omega)$ of $\Omega\in\mathcal{K}_o^n$ with $p\in\mathbb{R}$ and $1\leq k\leq n-1$ is denoted by
\begin{table}[h!]
\centering
\caption{The case of different $p$ to the $p$-th dual $k$-torsional rigidity}
\begin{tabular}{ |m{1.5cm}|m{12cm}| c |}
      \hline
      \thead{$p\neq n$} & \thead{$\widetilde{Q}_{k,n-p}(\Omega)=\frac{1}{n-p}\int_{S^{n-1}}\rho^{p+1-k}_{\Omega}(v)|Du(r_\Omega(v))|^{k+1}dv$} \\
      \hline
      \thead{$p=n$} &  \makecell{$\widetilde{Q}_{k,0}(\Omega)=\lim_{p\rightarrow n}\widetilde{Q}_{k,n-p}(\Omega)$ \ \ \ \ \ \ \ \ \ \ \ \ \ \ \ \ \ \ \ \ \ \ \ \ \ \ \ \ \ \ \ \\ $\ \ \ \ \ \ \ \ \ \ \ \ =\int_{S^{n-1}}\log\rho_{\Omega}(v)\rho^{n+1-k}_{\Omega}(v)|Du(r_\Omega(v))|^{k+1}dv$}\\
      \hline
    \end{tabular}
\end{table}

Then the  Minkowski problem of prescribing the $p$-th dual $k$-torsional measure can be described as:
\begin{problem}\label{pro12}
Let $1\leq k\leq n-1$ and $p\neq n$. Given a non-zero finite Borel measure $\mu$ on $S^{n-1}$, what are the necessary and sufficient conditions on $\mu$ such that there exists a convex body $\Omega\in\mathcal{K}_o^n$ whose the $p$-th dual $k$-torsional measure $\widetilde{Q}_{k,{n-p}}(\Omega,\cdot)$ is equal to the given measure $\mu$?
\end{problem}

In addition, we call the measure $\widetilde{Q}_{k,0}(\Omega,\cdot)$ is the dual log $k$-torsional measure, then the Minkowski problem of prescribing the dual log $k$-torsional measure is called the dual log Minkowski problem to the $k$-torsional rigidity which is stated as follows:

\begin{problem}\label{pro12+}
Let $1\leq k\leq n-1$. Given a non-zero finite Borel measure $\mu$ on $S^{n-1}$, what are the necessary and sufficient conditions on $\mu$ such that there exists a convex body $\Omega\in\mathcal{K}_o^n$ whose dual log $k$-torsional measure $\widetilde{Q}_{k,0}(\Omega,\cdot)$ is equal to the given measure $\mu$?
\end{problem}

\begin{remark}
We only discuss Problem \ref{pro12} with $p\neq n$ in this paper, and in subsequent article, we will discuss Problem \ref{pro12+} of $p=n$.
\end{remark}

If the given measure $\mu$ in Problem \ref{pro12} is absolutely continuous with respect to the Lebesgue measure and $\mu$ has a smooth density function $f:S^{n-1}\rightarrow (0,\infty)$, then according to (\ref{eq103}) and the Corfton formula
\begin{align*}
\int_{S^{n-1}}\rho^{n+1-k}(\Omega,v)dv=\int_{S^{n-1}}h(\Omega,x)dS_{n-k}(\Omega,x),
\end{align*}
solving Problem \ref{pro12} can be equivalently viewed as solving the following nonlinear partial differential equation on $S^{n-1}$:
\begin{align*}
f(x)=\frac{1}{n-p}\rho_\Omega^{p-n}h_{\Omega}|Du(\nu^{-1}_\Omega(x))|^{k+1}\sigma_{n-k}(h_{ij}(x)+h_\Omega(x)\delta_{ij}),
\end{align*}
equivalently,
\begin{align}\label{eq105}
f(x)=\frac{1}{n-p}(|\nabla h|^2+h^2)^{\frac{p-n}{2}}h_{\Omega}(x)|Du(\nu^{-1}_\Omega(x))|^{k+1}\sigma_{n-k}(h_{ij}(x)+h_\Omega(x)\delta_{ij}).
\end{align}
Here $h$ is the unknown function on $S^{n-1}$ to be found, $\nabla h$ and $h_{ij}$ denote the gradient vector and the Hessian matrix of $h$ with respect to an orthonormal frame on $S^{n-1}$.

If the factor
\begin{align*}
\frac{1}{n-p}(|\nabla h|^2+h^2)^{\frac{p-n}{2}}h_{\Omega}(x)
\end{align*}
is omitted in equation (\ref{eq105}), then (\ref{eq105}) will become the partial differential equation of
the Minkowski problem for $k$-torsional rigidity \cite{ZX2}. If only the factor $\frac{1}{n-p}(|\nabla h|^2+h^2)^{\frac{p-n}{2}}$ is omitted, then equation (\ref{eq105}) can be viewed as the partial differential equation of the logarithmic Minkowski problem to $k$-torsional rigidity. Moreover, when $p=n$, (\ref{eq105}) is the equation of the dual log Minkowski problem to the $k$-torsional rigidity.

In the present paper, we will investigate the smooth solutions to the normalized $p$-th dual Minkowski problem for the $k$-torsional rigidity with $p\neq n$ by the method of a curvature flow. Roughly speaking,  the Gauss curvature flow and the mean curvature flow are the two most common curvature flow methods, and are used to demonstrate the Minkowski problem and geometric inequalities, respectively. The Gauss curvature flow was first introduced and studied by Firey \cite{FI2} to model the shape change of worn stones. Since then, the Gauss curvature flow has been widely used to find
the smooth solutions of the various Minkowski problems, see \cite{CCQ,CH,LR,LYN}. In addition, the most crucial and difficult part in the study of mean curvature flows is the analysis of singularities. According to Huisken's classical theory \cite{HUI}, the mean convex surface will develop a first type singularity. For such singularities, their microstructure can be studied through the expansion process, and the final limit model is the self similar contraction solution. In this regard, the series of works by Colding and Minicozzi \cite{CO1,CO2} provides us with a profound and complete understanding of singularity structures. They established a profound connection between the singularity theory of mean curvature flow and the theory of stable minimal surfaces, and provided a detailed characterization of singularity classification.

The normalized equation we will study in this paper is as follows:
\begin{align}\label{eq106}
f(x)=\tau(|\nabla h|^2+h^2)^{\frac{p-n}{2}}h_{\Omega}(x)|Du(\nu^{-1}_\Omega(x))|^{k+1}\sigma_{n-k}(h_{ij}(x)+h_\Omega(x)\delta_{ij}),
\end{align}
where $\tau$ is a positive constant.

Let $1\leq k\leq n-1$, $p\neq n$, $\partial\Omega_0$ be a smooth, closed and strictly convex hypersurface in $\mathbb{R}^n$ containing the origin in its interior and $f$ be a positive smooth function on $S^{n-1}$. We construct and consider the long-time existence and convergence of a following curvature flow which is a family of convex hypersurfaces $\partial\Omega_t$  parameterized by  smooth maps $X(\cdot ,t):
S^{n-1}\times (0, \infty)\rightarrow \mathbb{R}^n$
satisfying the initial value problem:
\begin{align}\label{eq107}
\left\{
\begin{array}{lc}
\frac{\partial X(x,t)}{\partial t}=\frac{\langle X,v \rangle^2}{f(x)}(|\nabla h|^2+h^2)^{\frac{p-n}{2}}|D u(X(x,t),t)|^{k+1}\sigma_{n-k}(x,t)v
-\eta(t)X(x,t),\\
X(x,0)=X_0(x),\\
\end{array}
\right.
\end{align}
where $\sigma_{n-k}(x,t)$ is the $(n-k)$-th ($1\leq k\leq n-1$) elementary symmetric function for principal curvature radii, $v$ is the
outer unit normal at $X(x,t)$, $\langle X,v \rangle$ represents the standard inner product of $X$ and $v$ in $\mathbb{R}^n$ and $\eta(t)$ is given by
\begin{align}\label{eq108}
\eta(t)=\frac{\int_{S^{n-1}}\rho(v,t)^{p+1-k}|D u(X,t)|^{k+1}dv}{\int_{S^{n-1}}f(x)dx}.
\end{align}

For convenience, we construct a following functional which is very important for $C^0$ estimate of the solution to curvature flow (\ref{eq107}).
\begin{align}\label{eq109}
\Phi(\Omega_t)=\int_{S^{n-1}}\log h(x,t)f(x)dx.
\end{align}
Note that, we will show that $\log h(x,t)$ is well-defined in Section \ref{sec5}, i.e. $h(x,t)>0$.

We obtain the long-time existence and convergence of the flow (\ref{eq107}) in this article, see Theorem \ref{thm13} for details.
\begin{theorem}\label{thm13}
Let $1\leq k\leq n-1$, $p<n-2$ and $u(\cdot,t)$ be a smooth admissible solution of (\ref{eq101}) in $\Omega_t$. Let $\partial\Omega_0$ be a smooth, closed and strictly convex hypersurface in $\mathbb{R}^n$ containing the origin in its interior, and $f$ be a positive smooth function on $S^{n-1}$. Then the flow (\ref{eq107}) has an unique smooth non-even convex solution $\partial\Omega_t=X(S^{n-1},t)$. Moreover, when $t\rightarrow\infty$, there is a subsequence of $\partial\Omega_t$ that converges in $C^{\infty}$ to a smooth, closed and strictly convex hypersurface $\partial\Omega_\infty$, the support function $h^\infty(x)$ of convex body $\Omega_\infty$ enclosed by $\partial\Omega_\infty$ satisfies equation (\ref{eq106}).
\end{theorem}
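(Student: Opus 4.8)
The plan is to recast the geometric flow \eqref{eq107} as a scalar parabolic equation for the support function of $\Omega_t$ and then run the standard a priori estimate / continuation machinery, the two non-routine features being the coupling to the auxiliary $k$-Hessian problem \eqref{eq101} on the moving domain and the use of the invariant $\Phi$ for the $C^0$ lower bound. First I would reparametrise so that the outer normal at $X(x,t)$ is $x$ itself; writing $h(x,t)$ for the support function of $\Omega_t$ and using $\rho^2=|\nabla h|^2+h^2$, the normal component of \eqref{eq107} gives
\begin{align*}
\partial_t h(x,t)=\frac{h(x,t)^2}{f(x)}\,\rho^{\,p-n}\,\bigl|Du(X(x,t),t)\bigr|^{k+1}\sigma_{n-k}\bigl(h_{ij}+h\delta_{ij}\bigr)-\eta(t)\,h(x,t)
\end{align*}
on $S^{n-1}\times(0,\infty)$, where at each time $u(\cdot,t)$ is the admissible solution of \eqref{eq101} in $\Omega_t$. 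Since $\partial\Omega_0$ is smooth and strictly convex, $(h_{ij}+h\delta_{ij})$ is initially positive definite, hence lies in the G\aa rding cone $\Gamma_{n-k}$, so $\sigma_{n-k}$ is elliptic and the equation is (non-degenerately) parabolic; the usual linearisation argument then yields short-time existence and uniqueness of a smooth strictly convex solution, which extends for as long as $h$ is controlled in $C^2$ together with a uniform positive lower bound on the eigenvalues of $(h_{ij}+h\delta_{ij})$.

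\textbf{A priori estimates.} I would then establish, uniformly in $t$: \emph{(i) auxiliary estimates for $u$}: as long as $\Omega_t$ stays uniformly convex with uniformly bounded inradius and circumradius, the theory of $k$-Hessian equations (Caffarelli--Nirenberg--Spruck, Wang \cite{WXJ,WXJ1}) yields $\|u(\cdot,t)\|_{C^{2,\alpha}(\overline{\Omega_t})}\le C$ and, by the Hopf boundary lemma, $0<c\le|Du(X(x,t),t)|\le C$; this is what decouples the flow from \eqref{eq101} once the $C^0$--$C^2$ bounds for $h$ are in hand. \emph{(ii) $C^0$ estimates}: a maximum-principle argument at the spatial maximum of $h$, using the definition \eqref{eq108} of $\eta(t)$ and the sign condition $p<n-2$, gives $h(\cdot,t)\le C$; for the lower bound I would use that $\Phi(\Omega_t)=\int_{S^{n-1}}\log h\,f\,dx$ is invariant along \eqref{eq107} --- which follows by differentiating under the integral, substituting the evolution equation, and applying the Cauchy-type identity relating $dv$ to $dS_{n-k}$ --- so that the upper bound on $h$ forces $\int_{S^{n-1}}\log h\,f\,dx$ to remain bounded below, which together with convexity precludes the collapse $h\to0$ and yields $h(\cdot,t)\ge c>0$, hence $|\nabla h|\le C$ via $\rho^2=|\nabla h|^2+h^2$ and the two-ball estimate. \emph{(iii) $C^2$ estimates}: differentiating the evolution equation twice and applying the maximum principle to the largest eigenvalue of $(h_{ij}+h\delta_{ij})$ gives the upper bound, while the lower bound on the eigenvalues (equivalently a positive lower bound for $\sigma_{n-k}$, i.e. uniform parabolicity) is obtained through an auxiliary-function argument exploiting the $C^0$, $C^1$ bounds and the uniform bounds on $u$ and $|Du|$.

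\textbf{Long-time existence and convergence.} With the $C^2$ bounds and uniform ellipticity in place, the equation is uniformly parabolic and, being governed by the concave operator $\sigma_{n-k}^{1/(n-k)}$ on $\Gamma_{n-k}$, amenable to Krylov--Safonov and Schauder theory, which upgrades the solution to uniform-in-$t$ $C^\infty$ bounds; the continuation criterion then gives existence of the flow for all $t>0$, with $\eta(t)$ trapped between two positive constants. For convergence I would exhibit a monotone functional --- the $p$-th dual $k$-torsional rigidity $\widetilde{Q}_{k,n-p}(\Omega_t)$ (or its logarithm) --- whose time derivative, computed via the Hadamard variational formula established earlier together with the flow equation and a Cauchy--Schwarz/Jensen step, has a fixed sign and vanishes exactly at solutions of \eqref{eq106}. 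Being monotone and bounded, $\widetilde{Q}_{k,n-p}(\Omega_t)$ converges, so $\tfrac{d}{dt}\widetilde{Q}_{k,n-p}(\Omega_t)\to0$ along some $t_j\to\infty$; by the uniform $C^\infty$ bounds and Arzel\`a--Ascoli a subsequence of $h(\cdot,t_j)$ converges in $C^\infty$ to some $h^\infty>0$, which by the equality case is stationary, i.e. solves \eqref{eq106} with $\tau=1/\lim_j\eta(t_j)>0$. The limit body $\Omega_\infty$ is then smooth and strictly convex, and since neither $\Omega_0$ nor $f$ is assumed origin-symmetric the solution is in general non-even, completing Theorem \ref{thm13}.

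\textbf{Main obstacle.} The crux is the full set of $C^2$ estimates for $h$, especially the lower bound on the principal curvature radii that keeps the flow uniformly parabolic: unlike in the Gauss-curvature-flow setting, the speed carries the factor $|Du|^{k+1}$ with $u$ the solution of a fully nonlinear $k$-Hessian Dirichlet problem on the evolving domain, so one must first secure uniform-in-$t$ $C^{2,\alpha}$ bounds for $u$ up to $\partial\Omega_t$ and uniform positive two-sided bounds for $|Du|$ on the boundary, and then feed these into the maximum-principle argument for the eigenvalues of $(h_{ij}+h\delta_{ij})$. A secondary delicate point is the $C^0$ lower bound for $h$ when $p<n-2$, where the naive maximum-principle estimate fails and the invariance of $\Phi(\Omega_t)$ must be invoked as the substitute tool.
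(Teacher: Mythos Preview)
Your strategy is essentially the paper's: rewrite \eqref{eq107} as a scalar parabolic equation for $h$, prove $C^0$--$C^2$ a priori estimates (feeding in uniform bounds on $|Du|$ on $\partial\Omega_t$ from the $k$-Hessian theory), invoke Krylov for higher regularity and long-time existence, and use the monotonicity of $\widetilde{Q}_{k,n-p}(\Omega_t)$ together with Arzel\`a--Ascoli for subsequential convergence to a solution of \eqref{eq106}. The convergence step and the use of the invariance of $\Phi(\Omega_t)$ for the $C^0$ lower bound match the paper exactly.

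Two points where you diverge from the paper are worth flagging. First, you attribute the $C^0$ \emph{upper} bound to a maximum-principle argument at $\max_{S^{n-1}}h$ using $p<n-2$; the paper instead uses the invariance of $\Phi(\Omega_t)$ for the upper bound as well (via $h(x,t)\ge (x\cdot x^t_{\max})h(x^t_{\max},t)$, so that $\Phi(\Omega_0)=\Phi(\Omega_t)\ge C\log h_{\max}-c_1$). Your direct maximum-principle route is not obviously clean here because the speed carries $|Du|^{k+1}$ and $\eta(t)$, both of which scale with the diameter, so you would need to close the loop carefully. Second, in the paper the hypothesis $p<n-2$ is \emph{not} used in the $C^0$ estimate at all; it enters only in the lower bound for $\sigma_{n-k}$ (Lemma~\ref{lem55}), where the auxiliary function $E=\log\bigl(h^2 f^{-1}\rho^{p-n}|Du|^{k+1}\sigma_{n-k}\bigr)-A\rho^2/2$ is shown to be bounded below by a sign analysis that requires $p+2-n<0$. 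So the ``sign condition $p<n-2$'' should be relocated from your item (ii) to item (iii). With these adjustments your outline coincides with the paper's proof.
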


This paper is organized as follows. We collect some necessary background materials about convex bodies in Section \ref{sec2}. In Section \ref{sec3}, we obtain some properties of the $p$-th dual $k$-torsional measure and establish a Hadamard variational formula for the $p$-th dual $k$-torsional rigidity. In Section \ref{sec4}, we give the quantitative equation of the flow (\ref{eq107}) and confirm two key features of two important geometric functionals along the flow (\ref{eq107}). In Section \ref{sec5}, we give the priori estimates for solution to the flow (\ref{eq107}). We obtain the long-time existence and convergence of the flow (\ref{eq107}) and complete the proof of Theorem \ref{thm13} in Section \ref{sec6}.

\section{\bf Preliminaries}\label{sec2}
In this subsection, we give a brief review of some relevant notions and terminologies.

\subsection{Convex hypersurface} (see \cite{SC,UR}) Let $\mathbb{R}^n$ be the $n$-dimensional Euclidean space and $S^{n-1}$ be the unit sphere in $\mathbb{R}^n$. The origin-centered unit ball $\{y\in\mathbb{R}^n:|y|\leq 1\}$ is always denoted by $\mathcal{B}$. We write $\omega_n$ for the volume of $\mathcal{B}$ and denote its surface area by $n\omega_n$.

Let $\partial\Omega$ be a smooth, closed and strictly convex hypersurface in $\mathbb{R}^n$ containing the origin in its interior. The support function of a convex body $\Omega$ enclosed by $\partial\Omega$ is defined by
\begin{align*}h_\Omega(x)=h(\Omega,x)=\max\{x\cdot y:y\in\Omega\},\quad \forall x\in S^{n-1},\end{align*}
and the radial function of $\Omega$ with respect to $o$ (origin) $\in\mathbb{R}$ is defined by
\begin{align*}\rho_{\Omega}(v)=\rho(\Omega,v)=\max\{c>0:cv\in\Omega\},\quad  v\in S^{n-1}.\end{align*}
We easily obtain that the support function is homogeneous of degree $1$ and the radial function is homogeneous of degree $-1$.

For a convex body $\Omega\in\mathbb{R}^n$, its support hyperplane with outward unit normal vector $x\in S^{n-1}$ is represented by
\begin{align*}
H(\Omega,x)=\{y\in\mathbb{R}^n:y\cdot x=h(\Omega,x)\}.
\end{align*}
A boundary point of $\Omega$ which only has one supporting hyperplane is called a regular point, otherwise, it is a singular point. The set of singular points is denoted as $\sigma \Omega$, it is
well known that $\sigma \Omega$ has spherical Lebesgue measure 0.
The Gauss map $\nu_\Omega:y\in\partial \Omega\setminus \sigma \Omega\rightarrow S^{n-1}$ is represented by
\begin{align*}\nu_\Omega(y)=\{x\in S^{n-1}:y\cdot x=h_\Omega(x)\}.\end{align*}
Here $\partial \Omega\setminus \sigma \Omega$ is abbreviated as $\partial^\prime\Omega$.

Correspondingly, for a Borel set $\eta\subset S^{n-1}$, its inverse Gauss map is denoted by $\nu_\Omega^{-1}$,
\begin{align*}\nu_\Omega^{-1}(\eta)=\{y\in\partial^\prime \Omega:\nu_\Omega(y)\in\eta\}.\end{align*}

Suppose that $\Omega$ is parameterized by the inverse Gauss map $X:S^{n-1}\rightarrow \Omega$, that is $X(x)=\nu_\Omega^{-1}(x)$. Then the support function $h$ of $\Omega$ can be computed by
\begin{align}\label{eq201}h(x)=x\cdot X(x) , \ \ x\in S^{n-1},\end{align}
where $x$ is the outer unit normal of $\Omega$ at $X(x)$. Let $\{e_1,\cdots,e_{n-1}\}$ be an orthogonal frame on $S^{n-1}$. Let $\nabla$ be the gradient on $S^{n-1}$. Differentiating (\ref{eq201}), we have
\begin{align*}
\nabla_ih=\langle \nabla_ix,X(x)\rangle + \langle x,\nabla_iX(x)\rangle.
\end{align*}
Since $\nabla_iX(x)$ is tangent to $\partial\Omega$ at $X(x)$, we have
\begin{align*}
\nabla_ih=\langle \nabla_ix,X(x)\rangle.
\end{align*}
It follows that
\begin{align}\label{eq202}
\overline{\nabla} h=\nabla h+hx=X(x).
\end{align}
$\overline{\nabla}h$ is the point on $\partial\Omega$ whose outer unit normal vector is $x\in S^{n-1}$.

Denote by $h_i$ and $h_{ij}$ the first and second order covariant derivatives of $h$
on $S^{n-1}$, then computing as in \cite{JE0}, one can get
\begin{align}\label{eq203}
X(x)=h(x)_ie_i+h(x)x,\ \ \ \ X_i(x)=\omega_{ij}e_j,
\end{align}
where $\omega_{ij}=h_{ij}+h\delta_{ij}$. Note that we use the summation convention for the repeated indices here and after.

\subsection{Wull shapes and convex hulls}
Denote by $C(S^{n-1})$ the set of continuous functions on $S^{n-1}$ which is often equipped with the metric induced by the maximal norm. We write $C^+(S^{n-1})$ for the set of strictly positive functions in $C(S^{n-1})$. For any nonnegative $f\in C(S^{n-1})$, the Aleksandrov body is defined by
\begin{align*}
[f]=\bigcap_{v\in S^{n-1}}\bigg\{y\in \mathbb{R}^n:y\cdot v\leq f(v)\bigg\},
\end{align*}
the set is Wulff shape associated with $f$. Obviously, $[f]$ is a compact convex set containing the origin. If $\Omega$ is a compact convex set containing the origin, then $\Omega=[h_{\Omega}]$. The Aleksandrov convergence lemma is shown as follows: if the sequence $f_i\in C^+(S^{n-1})$ converges uniformly to $f\in C^+(S^{n-1})$, then $\lim_{i\rightarrow\infty}[f_i]=[f]$. The convex hull $\langle \rho \rangle$ generated by $\rho$ is a convex body defined by, for $\rho\in C^+(S^{n-1})$,
\begin{align*}
\langle \rho \rangle=\text{conv}\bigg\{\rho(v)v,v\in S^{n-1}\bigg\}.
\end{align*}
Clearly, $[f]^*=\langle \frac{1}{f} \rangle$ and if $\Omega\in\mathcal{K}_o^n$, $\langle \rho_\Omega \rangle=\Omega$.

Let $\Theta\subset S^{n-1}$ be a closed set, $f:\Theta\rightarrow\mathbb{R}$ be continuous, $\delta>0$ and $h_s:\Theta\rightarrow (0,\infty)$ be a continuous function is defined for any $s\in(-\delta,\delta)$ by (see \cite{HY}),
\begin{align*}
\log h_s(v)=\log h(v)+sf(v)+o(s,v),
\end{align*}
for any $v\in \Theta$ and the function $o(s,\cdot):\Theta\rightarrow \mathbb{R}$ is continuous and $\lim_{s\rightarrow 0}o(s,\cdot)/s=0$ uniformly on $\Theta$. Denoted by $[h_s]$ the Wulff shape determined by $h_s$, we shall call $[h_s]$ a logarithmic family of the Wulff shapes formed by $(h,f)$. On occasion, we shall write $[h_s]$ as $[h,f,s]$, and if $h$ happens to be the support function of a convex body $\Omega$ perhaps as $[\Omega,f,s]$, or as  $[\Omega,f,o,s]$, if required for clarity.

Let $g:\Theta\rightarrow \mathbb{R}$ be continuous and $\delta >0$. Let $\rho_s:\Theta\rightarrow (0,\infty)$ be a continuous function defined for each $s\in(-\delta,\delta)$ and each $v\in \Theta$ by
\begin{align*}
\log \rho_s(v)=\log \rho(v)+sg(v)+o(s,v).
\end{align*}
Denoted by $\langle \rho_s\rangle$ the convex hull generated by $\rho_s$, we shall call $\langle \rho_s\rangle$ a logarithmic family of the convex hulls generated by $(\rho,g)$. On occasion $\langle \rho_s\rangle$ as $\langle \rho, g, s\rangle$, and if $\rho$ happens to be the radial function of a convex body $\Omega$ perhaps as $\langle\Omega,g,s\rangle$, or as  $\langle\Omega,g,o,s\rangle$, if required for clarity.

Here we state the following lemma which is required in this paper.
\begin{lemma}\label{lem21} \cite[Lemma 4.2]{HY}
Let $\Theta\subset S^{n-1}$ be a closed set that is not contained in any closed hemisphere of $S^{n-1}$, $\rho_0:\Theta\rightarrow(0,\infty)$ and $g:\Theta\rightarrow\mathbb{R}$ be continuous. If $\langle \rho_s \rangle$ is a logarithmic family of convex hulls of $(\rho_0,g)$, then for $p\in\mathbb{R}$,
\begin{align*}
\lim_{s\rightarrow 0}\frac{ h^{-p}_{\langle \rho_s \rangle}(v)- h^{-p}_{\langle \rho_0 \rangle}(v)}{s}=-ph^{-p}_{\langle \rho_0 \rangle}(v)g(\alpha^*_{\langle \rho_0 \rangle}(v)),
\end{align*}
for all $v\in S^{n-1}\setminus \eta_{\langle \rho_0 \rangle}$. Moreover, there exist $\delta_0>0$ and $M>0$ so that
\begin{align*}
|h^{-p}_{\langle \rho_s \rangle}(v)-h^{-p}_{\langle \rho_0 \rangle}(v)|\leq M|s|,
\end{align*}
for all $v\in S^{n-1}$ and all $s\in(-\delta_0,\delta_0)$.
\end{lemma}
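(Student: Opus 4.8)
The plan is to reduce everything to the elementary variational behaviour of the support function of a compact set. Since $\langle\rho\rangle$ is the convex hull of $\{\rho(v)v:v\in\Theta\}$ and a set has the same support function as its convex hull, I first record that for every $u\in S^{n-1}$,
\begin{align*}
h_{\langle\rho_s\rangle}(u)=\max_{v\in\Theta}\rho_s(v)\,(u\cdot v),
\end{align*}
the maximum being attained because $\Theta$ is compact and $\rho_s$ continuous. Since $\Theta$ lies in no closed hemisphere, for each $u$ there is $v\in\Theta$ with $u\cdot v>0$, whence $h_{\langle\rho_s\rangle}(u)>0$; as $h_{\langle\rho_0\rangle}$ is continuous and positive on the compact sphere there are constants $0<c_0\leq C_0<\infty$ with $c_0\leq h_{\langle\rho_0\rangle}(u)\leq C_0$ for all $u$. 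From $\log\rho_s=\log\rho_0+sg+o(s,\cdot)$ with $o(s,\cdot)/s\to0$ uniformly and $\rho_0,g$ bounded on $\Theta$, one has $\rho_s(v)=\rho_0(v)\,e^{\,sg(v)+o(s,v)}$ and $\rho_s\to\rho_0$ uniformly on $\Theta$.

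Next I fix $u\notin\eta_{\langle\rho_0\rangle}$. By the very definition of $\eta_{\langle\rho_0\rangle}$, the supporting hyperplane $H(\langle\rho_0\rangle,u)$ exposes a single boundary point $x(u)$; being an exposed point of the convex hull of the compact set $\{\rho_0(v)v:v\in\Theta\}$, it equals $\rho_0(v_0)v_0$ for a unique $v_0\in\Theta$, and this $v_0$ is exactly $\alpha^*_{\langle\rho_0\rangle}(u)$ and the unique maximizer of $v\mapsto\rho_0(v)(u\cdot v)$ on $\Theta$. A compactness argument then gives that any maximizer $v_s$ of $v\mapsto\rho_s(v)(u\cdot v)$ converges to $v_0$ as $s\to0$: along a subsequence $v_{s_j}\to v'$, uniform convergence $\rho_{s_j}\to\rho_0$ yields $\rho_0(v')(u\cdot v')=\lim\rho_{s_j}(v_{s_j})(u\cdot v_{s_j})\geq\lim\rho_{s_j}(v_0)(u\cdot v_0)=h_{\langle\rho_0\rangle}(u)$, forcing $v'=v_0$ by uniqueness; in particular $u\cdot v_s>0$ for small $s$ since $u\cdot v_0>0$.

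With $v_s$ a maximizer at parameter $s$, I sandwich $h_{\langle\rho_s\rangle}(u)$: testing with $v_0$ gives $h_{\langle\rho_s\rangle}(u)\geq\rho_s(v_0)(u\cdot v_0)=h_{\langle\rho_0\rangle}(u)\,e^{\,sg(v_0)+o(s,v_0)}$, while $h_{\langle\rho_s\rangle}(u)=\rho_0(v_s)(u\cdot v_s)\,e^{\,sg(v_s)+o(s,v_s)}\leq h_{\langle\rho_0\rangle}(u)\,e^{\,sg(v_s)+o(s,v_s)}$ using $\rho_0(v_s)(u\cdot v_s)\leq h_{\langle\rho_0\rangle}(u)$. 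Taking logarithms, dividing by $s$ (the inequalities reverse for $s<0$) and letting $s\to0$, the outer terms both tend to $g(v_0)$ by continuity of $g$, by $v_s\to v_0$ and by $o(s,\cdot)/s\to0$; hence $\tfrac{d}{ds}\big|_{s=0}\log h_{\langle\rho_s\rangle}(u)=g(v_0)=g(\alpha^*_{\langle\rho_0\rangle}(u))$. Writing $h^{-p}_{\langle\rho_s\rangle}(u)=\exp(-p\log h_{\langle\rho_s\rangle}(u))$ and applying the chain rule gives
\begin{align*}
\lim_{s\to0}\frac{h^{-p}_{\langle\rho_s\rangle}(u)-h^{-p}_{\langle\rho_0\rangle}(u)}{s}=-p\,h^{-p}_{\langle\rho_0\rangle}(u)\,g(\alpha^*_{\langle\rho_0\rangle}(u)),
\end{align*}
which is the asserted variational formula.

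For the uniform bound I repeat the sandwich without any appeal to uniqueness: for every $u\in S^{n-1}$, picking a maximizer $v_0(u)$ at parameter $0$ (so $u\cdot v_0(u)>0$) and one $v_s(u)$ at parameter $s$, the same computation yields
\begin{align*}
e^{\,m(s)}\leq\frac{h_{\langle\rho_s\rangle}(u)}{h_{\langle\rho_0\rangle}(u)}\leq e^{\,M(s)},\qquad m(s)=\min_{\Theta}\bigl(sg+o(s,\cdot)\bigr),\quad M(s)=\max_{\Theta}\bigl(sg+o(s,\cdot)\bigr).
\end{align*}
Since $g$ is bounded on $\Theta$ and $|o(s,\cdot)|\leq|s|\,\varepsilon(s)$ with $\varepsilon(s)\to0$, for $|s|<\delta_0$ small we get $|m(s)|,|M(s)|\leq C|s|$, hence $|h_{\langle\rho_s\rangle}(u)-h_{\langle\rho_0\rangle}(u)|\leq C_0\bigl(e^{C|s|}-1\bigr)\leq M_1|s|$. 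As $h_{\langle\rho_s\rangle}(u),h_{\langle\rho_0\rangle}(u)$ both lie in the fixed compact interval $[\,c_0e^{-C\delta_0},\,C_0e^{C\delta_0}\,]\subset(0,\infty)$, on which $t\mapsto t^{-p}$ is Lipschitz with some constant $L$, we conclude $|h^{-p}_{\langle\rho_s\rangle}(u)-h^{-p}_{\langle\rho_0\rangle}(u)|\leq LM_1|s|=:M|s|$ for all $u\in S^{n-1}$ and $|s|<\delta_0$. The step I expect to be most delicate is the identification of $\alpha^*_{\langle\rho_0\rangle}(u)$ with the unique maximizer off the exceptional set $\eta_{\langle\rho_0\rangle}$, together with the selection argument $v_s\to v_0$: away from that null set the maximizer can jump, and it is precisely uniqueness that rescues the pointwise derivative. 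By contrast the Lipschitz estimate is robust, needing only that $\langle\rho_0\rangle$ has the origin in its interior, which is exactly where the no-hemisphere hypothesis enters.
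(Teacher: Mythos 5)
Your proof is correct. Note that the paper itself gives no proof of this statement: it is imported verbatim as \cite[Lemma 4.2]{HY}, and your argument is essentially a self-contained reconstruction of the Huang--Lutwak--Yang--Zhang proof, namely the representation $h_{\langle\rho_s\rangle}(u)=\max_{v\in\Theta}\rho_s(v)(u\cdot v)$ combined with a two-sided sandwich between test direction $v_0$ and a maximizer $v_s$, followed by the chain rule for $t\mapsto t^{-p}$ and a uniform Lipschitz bound from the boundedness of $g$ and the uniformity of $o(s,\cdot)/s\to 0$. The only step you state without justification is the identification of the unique maximizer $v_0$ with $\alpha^*_{\langle\rho_0\rangle}(u)$: this also uses that the exposed point $\rho_0(v_0)v_0$ is the radial boundary point of the hull in direction $v_0$, i.e.\ $\rho_{\langle\rho_0\rangle}(v_0)=\rho_0(v_0)$, which holds because the origin is interior to $\langle\rho_0\rangle$; this is a one-line gloss, not a gap.
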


\subsection{Symmetric functions and Hessian operators}(see \cite{BR}) For $k\in \{1,\cdots,n\}$, the $k$-th elementary symmetric function of $A$ is
\begin{align*}
S_k(A)=S(\lambda_1,\cdots,\lambda_n)=\sum_{1\leq i_1<\cdots<i_k\leq n}\lambda_{i_1}\cdots\lambda_{i_k},
\end{align*}
where $A=(a_{ij})$ is a matrix in the space $\mathcal{S}_n$ of the real symmetric $n\times n$ matrices and $\lambda_1,\cdots,\lambda_n$ is eigenvalues of $A$. Notice that $S_k(A)$ is just the sum of all $k\times k$ principal minors of $A$. In particularly, $S_1(A)=\rm tr A$ is the trace of $A$ and $S_n(A)=\det(A)$ is its determinant.

The operator $S_k^{\frac{1}{k}}$, for $k\in \{1,\cdots,n\}$ is homogeneous of degree $1$ and it is increasing and concave if restricted to
\begin{align*}
\Gamma_k=\{A\in\mathcal{S}_n: S_i(A)> 0~~\text{for}~~ i=1,\cdots,k\}.
\end{align*}

Denoting by
\begin{align*}
S_k^{ij}(A)=\frac{\partial}{\partial a_{ij}}S_k(A),
\end{align*}
Euler identity for homogeneous functions gives
\begin{align*}
S_k(A)=\frac{1}{k}S_k^{ij}(A)a_{ij}.
\end{align*}

Let $\Omega$ be an open subset of $\mathbb{R}^n$ and let $u\in C^2(\Omega)$, the $k$-Hessian operator $S_k(D^2u)$ is defined as the $k$-th elementary symmetric function of $D^2u$. Note that
\begin{align*}
S_1(D^2u)=\Delta u \quad \text{and} \quad S_n(D^2u)=\det(D^2 u).
\end{align*}
For $k>1$, the $k$-Hessian operators are fully nonlinear and, in general, not elliptic, unless restricted to the class of $k$-convex functions:
\begin{align*}
\Phi_k^2(\Omega)=\{u\in C^2(\Omega):S_i(D^2u)\geq 0~\text{in}~\Omega, i=1,2,\cdots,k\}.
\end{align*}
Notice that $\Phi^2_n(\Omega)$ coincides with class of $C^2(\Omega)$ convex functions.

A direct computation yields that $(S_k^{1j}(D^2u),\cdots,S_k^{nj}(D^2(u))$ is divergence free (see \cite{RE}), namely,
\begin{align*}
\frac{\partial}{\partial x_i}S_k^{ij}=0.
\end{align*}
Hence $S_k(D^2u)$ can be written in divergence form
\begin{align*}
S_k(D^2u)=\frac{1}{k}S_k^{ij}(D^2u)u_{ij}=\frac{1}{k}(S_k^{ij}(D^2u)u_j)_i,
\end{align*}
where subscripts $i,j$ stand for partial differentiations. For example, when $k=1$, we have $S_1^{ij}=\delta_{ij}$ and $S_1(D^2u)=\delta_{ij}u_{ij}$.

Let $\Omega$ be a bounded connected domain of $\mathbb{R}^n$ of class $C^2$ having principal curvatures $\kappa=(\kappa_1,\cdots,\kappa_{n-1})$ and outer unit normal $v_x$. For $k=1,\cdots,n-1$, we define the $k$-th curvature of $\partial \Omega$ by
\begin{align*}
\sigma_k(\partial \Omega)=\sigma_k(\kappa_1,\cdots,\kappa_{n-1}).
\end{align*}
Moreover, we set
\begin{align*}
\sigma_0=S_0\equiv 1, \quad \sigma_n\equiv 0.
\end{align*}
For example, $\sigma_1$ is equal to $(n-1)$-time the mean curvature of $\partial \Omega$, while $\sigma_{n-1}$ is the Gauss curvature of $\partial \Omega$.

In analogy with the case of functions, $\Omega$ is said $k$-convex, with $ k\in \{1,\cdots,{n-1}\}$, if $\sigma_j\geq 0$ for $j = 1,\cdots,k$ at every point $y=\partial \Omega$. We recall here that any sublevel set of a k-convex function is $(k-1)$-convex (see \cite{CA}).

In general, for $1\leq k\leq n$, a straightforward calculation yields
\begin{align*}
S_k(D^2u)=\sigma_k|Du|^k+\frac{S_ku_iu_lu_{lj}}{|Du|^2}.
\end{align*}
In addition, the following pointwise identity holds (see \cite{RE})
\begin{align*}
\sigma_{k-1}=\frac{S_k^{ij}(D^2u)u_iu_j}{|Du|^{k+1}}.
\end{align*}

\section{\bf The $p$-th dual $k$-torsional measure and variational formula}\label{sec3}

Firstly, we state the following variational formula for the $k$-torsional rigidity was proved in \cite{ZX2}.
\begin{lemma}\label{lem31}\cite[Lemma 3.1]{ZX2} Let $\Omega$ and $\Omega^\prime$ be two convex domains of $C^2_+$, and $h$ and $\theta$ be support functions of $\Omega$ and $\Omega^\prime$, respectively. Let $\Omega_s=\Omega+s\Omega^\prime$ with support function $h_s=h+s\theta$. Suppose $u(X,t)$ is the solution to (\ref{eq101}) in $\Omega_t$. Then
\begin{align*}
\frac{d}{dt}\widetilde{T}_k(\Omega_s)\bigg|_{s=0}=&\int_{S^{n-1}}\theta(x)|Du(X(x))|^{k+1}\sigma_{n-k}(h_{ij}(x)+h(x)\delta_{ij})dx\\
\nonumber=&\int_{S^{n-1}}\theta(x)d\mu_k^{tor}(\Omega,x).
\end{align*}
Here $\mu_k^{tor}(\Omega,\cdot)$ is the $k$-torsional measure of $\Omega$ \cite{ZX2}. Obviously, if $f\in C(S^{n-1})$, then
\begin{align}\label{eq301}
\int_{S^{n-1}}f(v)d\mu^{tor}_k(\Omega,v)=\int_{S^{n-1}}f(\alpha_\Omega(v))H(v)^{k+1}dv,
\end{align}
thus from (\ref{eq104}), we obtain
\begin{align*}
\widetilde{T}_k(\Omega)=\frac{1}{k(n+2)}\int_{S^{n-1}}\rho^{n+1-k}_{\Omega}(v)|Du|^{k+1}dv,
\end{align*}
where $H(v)=|D u(r_\Omega(v))|J(v)^{\frac{1}{k+1}}$ and $J(v)=\frac{\rho_\Omega(v)^{n+1-k}}{h_\Omega(\alpha_\Omega(v))}$, $r_\Omega(v)=\rho_\Omega(v)v$ and $v\in S^{n-1}$.
\end{lemma}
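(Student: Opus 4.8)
The plan is to avoid differentiating the boundary integrand in \eqref{eq104} head-on --- which would entangle the implicit $s$-dependence of the solution $u_s$ and of the curvature factor $\sigma_{n-k}$ --- and instead to pass to the \emph{energy form} of $\widetilde{T}_k$. Evaluating the Rayleigh quotient in \eqref{eq102} at its minimiser $u=u_\Omega$, which solves \eqref{eq101}, and using $S_k(D^2u)=1$ together with $u\le 0$ in $\Omega$ (a $k$-admissible function vanishing on $\partial\Omega$ is subharmonic, hence $\le 0$), one finds $T_k(\Omega)=\big(\int_\Omega(-u_\Omega)\,dy\big)^{k}$, so that
\begin{align*}
\widetilde{T}_k(\Omega)=\int_{\Omega}(-u_\Omega)\,dy.
\end{align*}
Put $u_s:=u_{\Omega_s}$ for the admissible solution of \eqref{eq101} on $\Omega_s=\Omega+s\Omega'$, whose support function is $h_s=h+s\theta$. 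By \eqref{eq202} the boundary point of $\Omega_s$ with outer normal $x$ is $\overline{\nabla}h_s(x)$, and since $\nabla\theta\perp x$ we get $\partial_s\overline{\nabla}h_s(x)\big|_{s=0}\cdot x=\theta(x)$, i.e.\ $\partial\Omega_s$ moves at $s=0$ with outward normal speed $\theta(\nu_\Omega)$. The transport formula then gives, because $u_\Omega=0$ on $\partial\Omega$,
\begin{align*}
\frac{d}{ds}\widetilde{T}_k(\Omega_s)\Big|_{s=0}=\int_{\Omega}(-\dot u)\,dy+\int_{\partial\Omega}(-u_\Omega)\,\theta(\nu_\Omega)\,d\mathcal H^{n-1}=\int_{\Omega}(-\dot u)\,dy,
\end{align*}
where $\dot u:=\partial_s u_s\big|_{s=0}$ is the shape derivative.

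The next step is to identify the problem solved by $\dot u$ and to integrate it against $1$. Differentiating $S_k(D^2u_s)=1$ in $s$ gives the linearised equation $S_k^{ij}(D^2u)\dot u_{ij}=0$ in $\Omega$, and differentiating the Dirichlet condition along the moving boundary gives $\dot u=-\theta(\nu_\Omega)\,\partial_\nu u$ on $\partial\Omega$. Writing $\int_\Omega(-\dot u)\,dy=\int_\Omega(-\dot u)\,S_k(D^2u)\,dy$ and using the formal self-adjointness of $w\mapsto S_k^{ij}(D^2u)w_{ij}$ --- which is exactly the divergence-free identity $\partial_i S_k^{ij}=0$ recalled in Section~\ref{sec2} --- the Green identity for the pair $(u,\dot u)$, combined with $S_k^{ij}u_{ij}=kS_k(D^2u)=k$ (Euler), $S_k^{ij}\dot u_{ij}=0$ and $u|_{\partial\Omega}=0$, makes the interior terms cancel and leaves
\begin{align*}
\int_{\Omega}(-\dot u)\,dy=\frac1k\int_{\partial\Omega}(-\dot u)\,S_k^{ij}(D^2u)\,u_j\,\nu_i\,d\mathcal H^{n-1}.
\end{align*}
On $\partial\Omega$ one has $Du=(\partial_\nu u)\nu$ with $\partial_\nu u=|Du|>0$ (Hopf), so the pointwise identity $\sigma_{k-1}=S_k^{ij}(D^2u)u_iu_j/|Du|^{k+1}$ from Section~\ref{sec2} yields $S_k^{ij}u_j\nu_i=\sigma_{k-1}\,|Du|^{k}$; inserting also $-\dot u=\theta(\nu_\Omega)\,|Du|$ turns the right-hand side into $\frac1k\int_{\partial\Omega}\theta(\nu_\Omega)\,|Du|^{k+1}\,\sigma_{k-1}(\kappa)\,d\mathcal H^{n-1}$.

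It remains to push this onto $S^{n-1}$ via the Gauss map $y\mapsto x=\nu_\Omega(y)$, $X(x)=\nu_\Omega^{-1}(x)$. Since $dx=K\,d\mathcal H^{n-1}$ and $\sigma_{k-1}(\kappa)=K\,\sigma_{n-k}(h_{ij}+h\delta_{ij})$ (the $(n-k)$-th symmetric function of the principal radii equals the $(k-1)$-th symmetric function of the curvatures divided by the Gauss curvature), the two Jacobian factors cancel and
\begin{align*}
\frac{d}{ds}\widetilde{T}_k(\Omega_s)\Big|_{s=0}=\frac1k\int_{S^{n-1}}\theta(x)\,|Du(X(x))|^{k+1}\,\sigma_{n-k}\big(h_{ij}(x)+h(x)\delta_{ij}\big)\,dx=\frac1k\int_{S^{n-1}}\theta\,d\mu_k^{tor}(\Omega,\cdot),
\end{align*}
which is the stated first variation formula, the overall constant being the one dictated by Euler's identity $S_k^{ij}u_{ij}=kS_k$ and consistent with the normalisation in \eqref{eq104} (and reducing to the first variation of the torsional rigidity when $k=1$). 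The two remaining identities in the statement --- the change of variables \eqref{eq301} and the radial expression for $\widetilde{T}_k$ --- then follow from \eqref{eq103} and the parametrisation of $\partial\Omega$ by the radial Gauss map $\alpha_\Omega$: writing $x=\alpha_\Omega(v)$ gives $\nu_\Omega^{-1}(x)=r_\Omega(v)$ and turns $\sigma_{n-k}(h_{ij}+h\delta_{ij})\,dx$ into the factor $H(v)^{k+1}\,dv$ recorded there.

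The main obstacle is the opening move of the middle paragraph: legitimising the shape derivative $\dot u$ and its linearised equation and boundary condition. This needs differentiability of the solution map $\Omega_s\mapsto u_{\Omega_s}$, obtained from the implicit function theorem applied to the $k$-Hessian operator; but this operator is elliptic only on the cone of $k$-admissible functions and, for $k<n$, degenerate in general, so one must work with a \emph{strictly} admissible $u$ on the strictly convex $C^2_+$ domain $\Omega$ --- so that the linearisation $S_k^{ij}(D^2u)\partial_{ij}$ is uniformly elliptic --- and with enough boundary regularity to run the Green identity and to guarantee $|Du|>0$ on $\partial\Omega$ by the Hopf lemma. This is precisely what the hypotheses $\Omega,\Omega'\in C^2_+$ and "$u$ a smooth admissible solution of \eqref{eq101}" provide; the transport formula and the integrations by parts up to $\partial\Omega$ are then routine for $u\in C^2(\overline{\Omega})$.
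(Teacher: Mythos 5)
The paper offers no proof of this lemma --- it is imported verbatim from \cite{ZX2} --- so there is no in-paper argument to compare yours against; I can only assess your derivation on its own terms. Your route (rewrite $\widetilde T_k(\Omega)=\int_\Omega(-u)\,dy$ by evaluating the quotient in \eqref{eq102} at the solution of \eqref{eq101}, differentiate via the transport formula, identify the linearised problem for the shape derivative $\dot u$, and convert $\int_\Omega(-\dot u)\,dy$ into a boundary integral using $\partial_iS_k^{ij}=0$, Euler's identity $S_k^{ij}u_{ij}=kS_k$, the pointwise identity $\sigma_{k-1}=S_k^{ij}u_iu_j/|Du|^{k+1}$, and $\sigma_{k-1}(\kappa)\,d\mathcal H^{n-1}=\sigma_{n-k}(h_{ij}+h\delta_{ij})\,dx$) is coherent, and each step checks out at the level of rigour customary here, with the differentiability of $s\mapsto u_{\Omega_s}$ correctly flagged as the point needing the uniform ellipticity of $S_k^{ij}(D^2u)\partial_{ij}$ on admissible solutions.

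The one substantive issue is the constant. Your computation gives
\begin{align*}
\frac{d}{ds}\widetilde T_k(\Omega_s)\Big|_{s=0}=\frac1k\int_{S^{n-1}}\theta(x)\,|Du(X(x))|^{k+1}\,\sigma_{n-k}\big(h_{ij}(x)+h(x)\delta_{ij}\big)\,dx,
\end{align*}
whereas the lemma as printed has no $\frac1k$, so strictly you have not proven the statement as written. That said, your constant is the one forced by the paper's own normalisation: $\widetilde T_k$ is homogeneous of degree $n+2$, so taking $\Omega'=\Omega$ and $\theta=h$ the variational formula must return $(n+2)\widetilde T_k(\Omega)$, and with the $\frac1k$ this reproduces \eqref{eq104} exactly, while without it one would get $\widetilde T_k(\Omega)=\frac{1}{n+2}\int_{S^{n-1}}h|Du|^{k+1}\sigma_{n-k}\,dx$, off from \eqref{eq104} by a factor of $k$ (the two agree only for $k=1$). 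So the mismatch is an inconsistency between the quoted lemma and \eqref{eq104} (or a $\frac1k$ hidden in the definition of $\mu_k^{tor}$ in \cite{ZX2}) rather than an error in your argument; you should say explicitly which normalisation of $\mu_k^{tor}$ you adopt and note that it is the one compatible with \eqref{eq104}. The remaining identities, \eqref{eq301} and the radial expression for $\widetilde T_k$, are indeed just the change of variables you indicate and need no further work.
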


\begin{proposition}\label{pro32}Let $\rho_0:S^{n-1}\rightarrow \mathbb{R}$ and $g:S^{n-1}\rightarrow \mathbb{R}$ be continuous. If $\langle \rho_s\rangle$ is a logarithmic family of convex hulls of $(\rho_0,g)$, then for $1\leq k\leq n-1$,
\begin{align*}
\lim_{s\rightarrow 0}\frac{\widetilde{T}_k(\langle\rho_s\rangle)-\widetilde{T}_k(\langle\rho_0\rangle)}{s}=\int_{S^{n-1}}g(v)\rho^{n+1-k}_{\langle\rho_0\rangle}(v)|Du(r_{\langle\rho_0\rangle}(v))|^{k+1}dv.
\end{align*}
\end{proposition}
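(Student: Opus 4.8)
The plan is to reduce the variational formula for $\widetilde{T}_k$ along a logarithmic family of convex hulls to the (already established) Hadamard-type variational formula of Lemma~\ref{lem31}, which is stated for Minkowski combinations of $C^2_+$ bodies, by passing through the radial/support-function dictionary and using the Corfton-type identity $\widetilde{T}_k(\Omega)=\frac{1}{k(n+2)}\int_{S^{n-1}}\rho^{n+1-k}_{\Omega}(v)|Du(r_\Omega(v))|^{k+1}dv$ that already appears in Lemma~\ref{lem31}. First I would record that, since $\rho_0,g$ are continuous on $S^{n-1}$, the functions $\rho_s$ with $\log\rho_s(v)=\log\rho_0(v)+sg(v)+o(s,v)$ form a family of continuous positive radial functions converging uniformly to $\rho_0$; hence $\langle\rho_s\rangle\to\langle\rho_0\rangle$ in the Hausdorff metric, and the associated support functions $h_{\langle\rho_s\rangle}$ converge uniformly to $h_{\langle\rho_0\rangle}$ as well. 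This furnishes the continuity needed to differentiate under the integral sign once a pointwise derivative and a uniform bound are in hand.

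The core step is to differentiate $\widetilde{T}_k(\langle\rho_s\rangle)$ in $s$ at $s=0$. I would like to write
\[
\widetilde{T}_k(\langle\rho_s\rangle)=\frac{1}{k(n+2)}\int_{S^{n-1}}\rho_s(v)^{\,n+1-k}\,\bigl|Du_s\bigl(r_{\langle\rho_s\rangle}(v)\bigr)\bigr|^{k+1}\,dv,
\]
where $u_s$ solves the $k$-Hessian equation \eqref{eq101} on $\langle\rho_s\rangle$, and then split the $s$-derivative into the contribution from the explicit factor $\rho_s^{\,n+1-k}$ and the contribution from the variation of the domain (hence of $u_s$ and of the boundary point $r_{\langle\rho_s\rangle}(v)$). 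The cleanest route, however, is not to differentiate this radial expression directly but to invoke Lemma~\ref{lem31}: approximating $\langle\rho_0\rangle$ and the perturbation by $C^2_+$ data, Lemma~\ref{lem31} gives $\frac{d}{ds}\widetilde{T}_k=\int_{S^{n-1}}\vartheta(x)\,d\mu_k^{tor}(\cdot,x)$ for a Minkowski-linear perturbation with increment-support-function $\vartheta$; then one transports this to the logarithmic-hull setting exactly as in the Huang--Lutwak--Yang--Zhang scheme, using \eqref{eq301}, i.e. $\int f\,d\mu_k^{tor}(\Omega,v)=\int f(\alpha_\Omega(v))H(v)^{k+1}\,dv$ with $H(v)=|Du(r_\Omega(v))|J(v)^{1/(k+1)}$ and $J(v)=\rho_\Omega(v)^{n+1-k}/h_\Omega(\alpha_\Omega(v))$. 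The change of variables from $x\in S^{n-1}$ (Gauss parametrization) to $v\in S^{n-1}$ (radial parametrization) via the radial Gauss map converts $\vartheta(x)=h_{\langle\rho_0\rangle}(x)\,g(\alpha^*_{\langle\rho_0\rangle}(x))$ (the logarithmic increment of the support function induced by the logarithmic increment $g$ of the radial function) together with the Jacobian $h_\Omega(\alpha_\Omega(v))/\rho_\Omega(v)^{n+1-k}$ into precisely the integrand $g(v)\,\rho_{\langle\rho_0\rangle}^{\,n+1-k}(v)\,|Du(r_{\langle\rho_0\rangle}(v))|^{k+1}$, with the factors $h_\Omega(\alpha_\Omega(v))$ cancelling and one power of $\rho$ being absorbed.

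The main obstacle is the interchange of limit and integral: I must produce a locally uniform (in $v$, uniform in small $s$) bound on the difference quotients of $\rho_s(v)^{\,n+1-k}|Du_s(r_{\langle\rho_s\rangle}(v))|^{k+1}$, which requires (i) the Lipschitz-in-$s$ estimate for $h^{-p}_{\langle\rho_s\rangle}$ and hence for $h_{\langle\rho_s\rangle}$ and $\rho_{\langle\rho_s\rangle}$ from Lemma~\ref{lem21}, and (ii) a stability estimate $|Du_s|\le C$ together with continuity of $Du_s$ under Hausdorff perturbation of the convex domain — this is where admissibility of the $k$-Hessian solution and the a~priori gradient bound near $\partial\Omega$ (available since $\partial\langle\rho_0\rangle$ can be taken smooth and strictly convex after approximation, then removed by a density argument in $C(S^{n-1})$) do the work. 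A secondary technical point is that the radial Gauss map $\alpha^*_{\langle\rho_0\rangle}$ is defined only off a set of measure zero; since this set is $dv$-null and the integrands are bounded, it does not affect the integral. Once these bounds are in place, dominated convergence yields the stated formula; the equality of the two descriptions (direct differentiation of the radial integral vs. the transported Lemma~\ref{lem31} formula) is then automatic, and Proposition~\ref{pro32} follows.
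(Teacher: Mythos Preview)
Your approach is essentially the one the paper takes: invoke Lemma~\ref{lem31} to write the derivative of $\widetilde{T}_k$ against the $k$-torsional measure, use Lemma~\ref{lem21} (with $p=-1$) to identify the pointwise derivative $\lim_{s\to0}\frac{h_{\langle\rho_s\rangle}-h_{\langle\rho_0\rangle}}{s}=h_{\langle\rho_0\rangle}\,g(\alpha^*_{\langle\rho_0\rangle})$ together with the uniform bound justifying dominated convergence, and then convert to radial form via \eqref{eq301}. The paper carries this out in four lines without the $C^2_+$ approximation or the explicit $|Du_s|$ stability discussion you sketch; those extra justifications are reasonable to flag but are not needed for the formal argument as the paper presents it.
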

\begin{proof}
Using the dominated convergence theorem,  Lemma \ref{lem31}, Lemma \ref{lem21} and (\ref{eq301}), we get
\begin{align*}
&\lim_{s\rightarrow 0}\frac{\widetilde{T}_k(\langle\rho_s\rangle)-\widetilde{T}_k(\langle\rho_0\rangle)}{s}\\
=&\int_{S^{n-1}}\lim_{s\rightarrow 0}\frac{h_{\langle \rho_s\rangle}(\xi)-h_{\langle \rho_0\rangle}(\xi)}{s}d\mu_k^{tor}(\langle \rho_0\rangle,\xi)\\
=&\int_{S^{n-1}}g(\alpha^*_{\langle \rho_0\rangle}(\xi))h_{\langle \rho_0\rangle}(\xi)d\mu_k^{tor}(\langle \rho_0\rangle,\xi)\\
=&\int_{S^{n-1}}g(v)h_{\langle \rho_0\rangle}(\alpha_{\langle \rho_0\rangle}(v))H(v)^{k+1}dv\\
=&\int_{S^{n-1}}g(v)\rho_{\langle \rho_0\rangle}^{n+1-k}(v)|Du(r_{\langle\rho_0\rangle}(v))|^{k+1}dv.
\end{align*}
\end{proof}
\begin{corollary}\label{cor33}Let $\Omega_1, \Omega_2\in\mathcal{K}_o^n$ and $p\in\mathbb{R}$. Then when $p\neq 0$,
\begin{align*}
\lim_{s\rightarrow 0}\frac{\widetilde{T}_k(\langle\rho_{\Omega_1\widetilde{+}_ps\cdot\Omega_2}\rangle)-\widetilde{T}_k(\langle\rho_{\Omega_1}\rangle)}{s}=
\frac{1}{p}\int_{S^{n-1}}\rho_{\Omega_2}(v)^p\rho_{\Omega_1}(v)^{n+1-k-p}|D u(r_{\Omega_1}(v))|^{k+1}dv,
\end{align*}
when $p=0$,
\begin{align*}
\lim_{s\rightarrow 0}\frac{\widetilde{T}_k(\langle\rho_{\Omega_1\widetilde{+}_0s\cdot\Omega_2}\rangle)-\widetilde{T}_k(\langle\rho_{\Omega_1}\rangle)}{s}=
\int_{S^{n-1}}\log\rho_{\Omega_2}(v)\rho_{\Omega_1}(v)^{n+1-k}|D u(r_{\Omega_1}(v))|^{k+1}dv.
\end{align*}
\end{corollary}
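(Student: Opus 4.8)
The plan is to deduce Corollary \ref{cor33} from Proposition \ref{pro32} by choosing the function $g$ in the logarithmic family of convex hulls appropriately for each of the two $L_p$-combinations $\widetilde{+}_p$. First I would recall the definition of the $L_p$ radial combination: for $p\neq 0$, $\rho_{\Omega_1\widetilde{+}_p s\cdot\Omega_2}(v)^p = \rho_{\Omega_1}(v)^p + s\,\rho_{\Omega_2}(v)^p$, so that writing this in logarithmic form gives
\begin{align*}
\log\rho_{\Omega_1\widetilde{+}_p s\cdot\Omega_2}(v)
= \log\rho_{\Omega_1}(v) + s\,\frac{1}{p}\,\frac{\rho_{\Omega_2}(v)^p}{\rho_{\Omega_1}(v)^p} + o(s,v),
\end{align*}
by a first-order Taylor expansion of $t\mapsto \tfrac1p\log(\rho_{\Omega_1}^p + t)$ at $t=0$; the remainder $o(s,v)$ satisfies $o(s,\cdot)/s\to 0$ uniformly because $\rho_{\Omega_1},\rho_{\Omega_2}$ are continuous and bounded away from $0$ and $\infty$ on $S^{n-1}$. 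Hence $\langle\rho_{\Omega_1\widetilde{+}_p s\cdot\Omega_2}\rangle$ is precisely a logarithmic family of convex hulls of $(\rho_{\Omega_1}, g)$ with
\begin{align*}
g(v) = \frac{1}{p}\,\rho_{\Omega_2}(v)^p\,\rho_{\Omega_1}(v)^{-p}.
\end{align*}

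Next I would substitute this $g$ into the conclusion of Proposition \ref{pro32}. Since $\langle\rho_{\Omega_1}\rangle=\Omega_1$ for $\Omega_1\in\mathcal{K}_o^n$, the proposition gives
\begin{align*}
\lim_{s\rightarrow 0}\frac{\widetilde{T}_k(\langle\rho_{\Omega_1\widetilde{+}_p s\cdot\Omega_2}\rangle)-\widetilde{T}_k(\langle\rho_{\Omega_1}\rangle)}{s}
= \int_{S^{n-1}} \frac{1}{p}\,\rho_{\Omega_2}(v)^p\,\rho_{\Omega_1}(v)^{-p}\,\rho_{\Omega_1}(v)^{n+1-k}\,|Du(r_{\Omega_1}(v))|^{k+1}\,dv,
\end{align*}
and collecting the powers of $\rho_{\Omega_1}$ yields the exponent $n+1-k-p$, which is exactly the claimed formula for $p\neq 0$. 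For the case $p=0$, the relevant combination is the logarithmic radial sum, $\log\rho_{\Omega_1\widetilde{+}_0 s\cdot\Omega_2}(v) = (1-s)\log\rho_{\Omega_1}(v) + s\log\rho_{\Omega_2}(v)$ — or equivalently a perturbation for which one reads off directly $g(v) = \log\rho_{\Omega_2}(v) - \log\rho_{\Omega_1}(v)$; I would again apply Proposition \ref{pro32} with this $g$. The term involving $-\log\rho_{\Omega_1}$ together with the geometry should be handled by noting that this is the known first variation giving back a multiple of $\widetilde{T}_k$ itself, but in fact the cleanest route is to observe that $\widetilde{+}_0$ can be taken as $\log\rho_s = \log\rho_{\Omega_1} + s\log\rho_{\Omega_2} + o(s)$ after the appropriate normalization, so that $g = \log\rho_{\Omega_2}$ and the formula follows immediately.

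The only genuinely delicate point is verifying that the perturbed radial functions define a bona fide logarithmic family in the sense required by Lemma \ref{lem21} and Proposition \ref{pro32} — that is, that the error term is $o(s)$ uniformly on $S^{n-1}$, and that $g$ is continuous. Both are routine here: $\rho_{\Omega_1}$ and $\rho_{\Omega_2}$ are continuous on the compact set $S^{n-1}$ and strictly positive (since $\Omega_1,\Omega_2\in\mathcal{K}_o^n$), so the composition with the smooth function $t\mapsto\frac1p\log(a+t)$ on a compact $t$-interval is uniformly controlled, giving the uniform $o(s)$ estimate, and $g$ is a continuous function of $v$. With this checked, both identities are immediate consequences of Proposition \ref{pro32}, so I would keep the proof short, essentially: identify $g$, invoke Proposition \ref{pro32}, simplify the exponent.
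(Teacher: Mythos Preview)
Your approach is exactly the paper's: expand $\log\rho_{\Omega_1\widetilde{+}_p s\cdot\Omega_2}$ to first order in $s$, read off $g$, and invoke Proposition~\ref{pro32} with $\langle\rho_0\rangle=\Omega_1$. The only wrinkle is your treatment of $p=0$: the paper takes $\rho_{\Omega_1\widetilde{+}_0 s\cdot\Omega_2}=\rho_{\Omega_1}\rho_{\Omega_2}^{\,s}$, so that $\log\rho_s=\log\rho_{\Omega_1}+s\log\rho_{\Omega_2}$ \emph{exactly} (no $o(s)$ term, no ``normalization''), and $g=\log\rho_{\Omega_2}$ drops out immediately---your initial convex-combination form $(1-s)\log\rho_{\Omega_1}+s\log\rho_{\Omega_2}$ is a different convention and would give $g=\log\rho_{\Omega_2}-\log\rho_{\Omega_1}$, which does not match the stated formula.
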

\begin{proof}
For sufficiently small $s$,
\begin{align*}
\rho_{\Omega_1\widetilde{+}_ps\cdot\Omega_2}=(\rho^p_{\Omega_1}+s\rho^p_{\Omega_2})^{\frac{1}{p}},\quad p\neq 0,\\
\rho_{\Omega_1\widetilde{+}_0s\cdot\Omega_2}=\rho_{\Omega_1}\rho^s_{\Omega_2},\quad p=0.
\end{align*}
Then
\begin{align}\label{eq302}
\begin{split}
\log(\rho_{\Omega_1\widetilde{+}_ps\cdot\Omega_2})=\log\rho_{\Omega_1}+s\frac{\rho^p_{\Omega_2}}{p\rho^p_{\Omega_1}}+o(s,\cdot),\quad p\neq 0,\\
\log\rho_{\Omega_1\widetilde{+}_0s\cdot\Omega_2}=\log\rho_{\Omega_1}+s\log\rho_{\Omega_2},\quad p=0.
\end{split}
\end{align}

Since $\Omega_1, \Omega_2\in\mathcal{K}_o^n$, the logarithmic family of convex hulls $\langle\rho_{\Omega_1\widetilde{+}_ps\cdot\Omega_2}\rangle=\log\rho_{\Omega_1}+s\frac{\rho^p_{\Omega_2}}{p\rho^p_{\Omega_1}}+o(s,\cdot)$ and $\langle\rho_{\Omega_1\widetilde{+}_0s\cdot\Omega_2}\rangle=\log\rho_{\Omega_1}+s\log\rho_{\Omega_2}$. Let $\langle\rho_0\rangle=\Omega_1$ and $g=\frac{\rho^p_{\Omega_2}}{p\rho^p_{\Omega_1}}$ with $p\neq 0$ and $g=\log\rho_{\Omega_2}$ with $p=0$, thus the desired result follows directly from Proposition \ref{pro32} and formula (\ref{eq302}).
\end{proof}

To simplify the definition, we use the normalized power function \cite{LE011}. For $p\in\mathbb{R}$, and $b\in (0,\infty)$, define $b^{\overline{a}}$, by
\begin{align}\label{eqnpf}
b^{\overline{a}}=
\left\{
    \begin{array}{lc}
       \frac{1}{a}b^a,\quad a\neq 0, \\
        \log b\quad a= 0.\\
    \end{array}
\right.
\end{align}

Using the above variational formula for the $k$-torsional rigidity with respect to the $p$-th radial combination, with the help of (\ref{eqnpf}), we can define the $p$-th dual mixed $k$-torsional rigidity follows: Let $1\leq k\leq n-1$, $p\in\mathbb{R}$ and convex bodies $\Omega_1, \Omega_2\in\mathcal{K}_o^n$, the $p$-th dual mixed $k$-torsional rigidity $\widetilde{Q}_{k,p}(\Omega_1,\Omega_2)$ is defined by
\begin{align}\label{eq303}
\widetilde{Q}_{k,p}(\Omega_1,\Omega_2)=\int_{S^{n-1}}\rho_{\Omega_2}(v)^{\bar{p}}\rho_{\Omega_1}(v)^{n+1-k-p}|Du(r_{\Omega_1}(v))|^{k+1}dv.
\end{align}

When $\Omega_1=\Omega_2$, the $p$-th dual mixed $k$-torsional rigidity of $\Omega_1$ will be shown to be the special case as follows:
\begin{align*}
\widetilde{Q}_{k}(\Omega_1)=\widetilde{Q}_{k,p}(\Omega_1,\Omega_1)=\int_{S^{n-1}}\rho_{\Omega_1}(v)^{\bar{p}}\rho_{\Omega_1}(v)^{n+1-k-p}|D u(r_{\Omega_1}(v))|^{k+1}dv.
\end{align*}
when $p\neq 0$, $\widetilde{T}_k(\Omega_1)=\frac{p}{k(n+2)}\widetilde{Q}_{k}(\Omega_1)$.

Let $\Omega_2=\mathcal{B}$ ($\mathcal{B}$ is a unit ball with $\rho_{\mathcal{B}}(v)=1$) and replace $p$ by $n-p$ in (\ref{eq303}) and $p\neq n$, the $p$-th dual $k$-torsional rigidity of $\Omega_1$ is defined by
\begin{align}\label{eq304}
\widetilde{Q}_{k,n-p}(\Omega_1)=\frac{1}{n-p}\int_{S^{n-1}}\rho_{\Omega_1}(v)^{p+1-k}|Du(r_{\Omega_1}(v))|^{k+1}dv.
\end{align}
When $p=n$, we use $\lim_{p\rightarrow n}\widetilde{Q}_{k,n-p}(\Omega_1)$ to define $\widetilde{Q}_{k,0}(\Omega_1)$, then
\begin{align*}
\widetilde{Q}_{k,0}(\Omega_1)=&\lim_{p\rightarrow n}\frac{1}{n-p}\int_{S^{n-1}}\rho_{\Omega_1}(v)^{p+1-k}|Du(r_{\Omega_1}(v))|^{k+1}dv\\
=&\lim_{p\rightarrow n}\frac{1}{n-p}\int_{S^{n-1}}\rho^{n-p}_{\Omega_1}(v)\rho^{2p+1-n-k}_{\Omega_1}(v)|Du(r_{\Omega_1}(v))|^{k+1}dv\\
=&\int_{S^{n-1}}\log\rho_{\Omega_1}(v)\rho_{\Omega_1}(v)^{n+1-k}|Du(r_{\Omega_1}(v))|^{k+1}dv.
\end{align*}

Because of the need, the definition of the $p$-th dual $k$-torsional measure has already been proposed in the introduction. For convenience, use the normalized power function (\ref{eqnpf}), the definition of the $p$-th dual $k$-torsional measure will be restated as follows.
\begin{definition}\label{def34}
Let $p\in \mathbb{R}$, $1\leq k\leq n-1$ and $\Omega\in\mathcal{K}_o^n$, we define the $p$-th dual $k$-torsional measure by
\begin{align*}
\widetilde{Q}_{k,{n-p}}(\Omega,\eta)=&\int_{\alpha_\Omega^*(\eta)}\rho^{\overline{n-p}}_{\Omega}(v)\rho^{2p+1-n-k}_{\Omega}(v)|Du(r_\Omega(v))|^{k+1}dv\\
=&\int_{S^{n-1}}\bm1_{\alpha_\Omega^*(\eta)}\rho^{\overline{n-p}}_{\Omega}(v)\rho^{2p+1-n-k}_{\Omega}(v)|Du(r_\Omega(v))|^{k+1}dv,
\end{align*}
for each Borel set $\eta\subset S^{n-1}$ and $r_\Omega(v)=\rho_\Omega(v)v$.
\end{definition}

Note that, we will not discuss $p=n$ but only $p\neq n$ in the present paper. Next, we give some properties of the $p$-th dual $k$-torsional measure and variational formula for the $p$-th dual $k$-torsional rigidity with $p\neq n$.

\subsection{\bf The $p$-th dual $k$-torsional measure for special classes of convex bodies}
\begin{lemma}\label{lem35}
Let $\Omega\in\mathcal{K}_o^n$, $1\leq k\leq n-1$ and $p\neq n$. For each function $g:S^{n-1}\rightarrow\mathbb{R}$, $\eta\subset S^{n-1}$, then
\begin{align}\label{eq305}
\int_{S^{n-1}}g(\xi)d\widetilde{Q}_{k,n-p}(\Omega,\xi)=\int_{S^{n-1}}g(\alpha_\Omega(v))\rho_{\Omega}(v)^{p+1-k}|Du(r_\Omega(v))|^{k+1}dv.
\end{align}
\end{lemma}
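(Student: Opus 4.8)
The plan is to unwind both sides of \eqref{eq305} through the change-of-variables machinery already set up for the $p$-th dual $k$-torsional measure, so that the identity reduces to a routine integral substitution on $S^{n-1}$. Starting from Definition \ref{def34}, we have
\begin{align*}
\int_{S^{n-1}}g(\xi)\,d\widetilde{Q}_{k,n-p}(\Omega,\xi)=\int_{S^{n-1}}g(\xi)\Big(\int_{S^{n-1}}\bm1_{\alpha_\Omega^*(\{\xi\})}(v)\rho^{\overline{n-p}}_{\Omega}(v)\rho^{2p+1-n-k}_{\Omega}(v)|Du(r_\Omega(v))|^{k+1}\,dv\Big),
\end{align*}
which, after reorganizing the double integration, becomes an integral over $v\in S^{n-1}$ of $g$ evaluated at the radial Gauss image $\alpha_\Omega(v)$ times the weight $\rho^{\overline{n-p}}_{\Omega}(v)\rho^{2p+1-n-k}_{\Omega}(v)|Du(r_\Omega(v))|^{k+1}$. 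The point is that $\alpha_\Omega^*$ and $\alpha_\Omega$ are essentially inverse to each other off a set of measure zero (the singular set has spherical Lebesgue measure $0$, as recalled in Section \ref{sec2}), so the indicator $\bm1_{\alpha_\Omega^*(\eta)}(v)$ equals $\bm1_\eta(\alpha_\Omega(v))$ for a.e.\ $v$; pushing this through and using that $g$ is an arbitrary Borel function reduces everything to showing
\begin{align*}
\rho^{\overline{n-p}}_{\Omega}(v)\rho^{2p+1-n-k}_{\Omega}(v)=\tfrac{1}{n-p}\rho_\Omega(v)^{p+1-k}.
\end{align*}

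Now I would verify this last algebraic identity directly from the definition \eqref{eqnpf} of the normalized power function. Since $p\neq n$, we have $n-p\neq 0$, so $\rho^{\overline{n-p}}_{\Omega}(v)=\frac{1}{n-p}\rho_\Omega(v)^{n-p}$; multiplying by $\rho_\Omega(v)^{2p+1-n-k}$ gives $\frac{1}{n-p}\rho_\Omega(v)^{(n-p)+(2p+1-n-k)}=\frac{1}{n-p}\rho_\Omega(v)^{p+1-k}$, exactly the weight appearing on the right-hand side of \eqref{eq305}. This confirms the claim for the case $p\neq n$, which is the only case under consideration in the paper.

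For the measure-theoretic step I would argue as follows: the map $v\mapsto\alpha_\Omega(v)$ is defined for all $v$ outside a set of spherical measure zero, and for Borel $\eta$ one has $v\in\alpha_\Omega^*(\eta)$ iff $\alpha_\Omega(v)\in\eta$ up to this null set; hence for the simple function $g=\bm1_\eta$ the identity \eqref{eq305} is precisely the definition of $\widetilde{Q}_{k,n-p}(\Omega,\eta)$ rewritten via $\bm1_{\alpha_\Omega^*(\eta)}(v)=\bm1_\eta(\alpha_\Omega(v))$ together with the power-function simplification above. The general case of an arbitrary $g$ then follows by the standard extension argument: linearity gives it for simple functions, monotone convergence extends it to nonnegative measurable $g$, and splitting into positive and negative parts handles general $g$ for which the integrals make sense. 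The main obstacle — really the only nontrivial point — is making sure the correspondence between $\alpha_\Omega^*$ and $\alpha_\Omega$ is used correctly on the null set where either is undefined or multivalued; once that bookkeeping is in place, the proof is a one-line substitution plus the normalized-power-function identity.
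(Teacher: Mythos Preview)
Your approach is essentially the same as the paper's: both arguments reduce to the identity $\bm1_{\alpha_\Omega^*(\eta)}(v)=\bm1_\eta(\alpha_\Omega(v))$ for a.e.\ $v$, verify \eqref{eq305} for indicators/simple functions, and then extend. The paper invokes \cite[Eq.~(2.21)]{HY} for the indicator identity and extends to bounded Borel $g$ via uniform approximation by simple functions, whereas you extend via monotone convergence and positive/negative parts; either route is standard. One caution: your opening display, which writes the integral against $d\widetilde{Q}_{k,n-p}$ as an integral of $g(\xi)$ times $\widetilde{Q}_{k,n-p}(\Omega,\{\xi\})$, is not a valid identity (singleton masses are generically zero), so it should be dropped in favor of the indicator-function argument you give afterward --- which is precisely what the paper does.
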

\begin{proof}
The proof of (\ref{eq305}) refers to \cite [Lemma 3.3]{HY}. Assuming $\psi$ is a simple function on $S^{n-1}$ given by
\begin{align*}
\psi=\sum_{i=1}^mc_i\bm1_{\eta_i}
\end{align*}
with $c_i\in\mathbb{R}$ and Borel set $\eta_i\subset S^{n-1}$. By Definition \ref{def34} with $p\neq n$ and \cite[Equation (2.21)]{HY}, we get
\begin{align*}
\int_{S^{n-1}}\psi(\xi)d\widetilde{Q}_{k,{n-p}}(\Omega,\xi)=&\int_{S^{n-1}}\sum_{i=1}^mc_i\bm1_{\eta_i}(\xi)d\widetilde{Q}_{k,{n-p}}(\Omega,\xi)\\
=&\sum_{i=1}^mc_i\widetilde{Q}_{k,{n-p}}(\Omega,\eta_i)\\
=&\frac{1}{n-p}\int_{S^{n-1}}\sum_{i=1}^mc_i\bm1_{\bm\alpha^*_\Omega(\eta_i)}(v)\rho_{\Omega}(v)^{p+1-k}|Du(r_\Omega(v))|^{k+1}dv\\
=&\frac{1}{n-p}\int_{S^{n-1}}\sum_{i=1}^mc_i\bm1_{\eta_i}(\alpha_\Omega(v))\rho_{\Omega}(v)^{p+1-k}|Du(r_\Omega(v))|^{k+1}dv\\
=&\frac{1}{n-p}\int_{S^{n-1}}\sum_{i=1}^m\psi(\alpha_\Omega(v))\rho_{\Omega}(v)^{p+1-k}|Du(r_\Omega(v))|^{k+1}dv.
\end{align*}
Note that we have established (\ref{eq305}) for simple functions, for a bounded Borel $g$, we choose a sequence of simple functions $\psi_k$ that converge to $g$, uniformly. Then $\psi_k\circ \alpha_\Omega$ to $g\circ \alpha_\Omega$ a.e. with respect to the spherical Lebesgue measure. Since $g$ is a Borel function on $S^{n-1}$ and the radial Gauss map $\alpha_\Omega$ is continuous on $S^{n-1}\setminus\eta_\Omega$, the composite function $g\circ \alpha_\Omega$ is a Borel function on $S^{n-1}\setminus\eta_\Omega$. Hence $g$ and $g\circ \alpha_\Omega$ are Lebesgue integrable on $S^{n-1}$ because $g$
is bounded and $\eta_\Omega$ has the Lebesgue measure zero. Taking the limit $k\rightarrow\infty$ establishes (\ref{eq305}).
\end{proof}

We conclude with an observation regarding the $p$-th dual $k$-torsional measures.

Let $P\in\mathcal{K}_o^n$ be a polytope with outer unit normals $v_1,\cdots,v_m$, $\triangle_i$ be the cone that consists of all of the rays emanating from the origin and passing through the facet of $P$ whose outer unit normal is $v_i$. Then recalling that we abbreviate $\bm\alpha_P^*(\{v_i\})$ by $\bm\alpha_P^*(v_i)$, we have
\begin{align}\label{eq306}
\bm\alpha_P^*(v_i)=S^{n-1}\cap \triangle_i.
\end{align}
If $\eta\subset S^{n-1}$ is a Borel set such that $\{v_1,\cdots,v_m\}\cap\eta=\emptyset$, then $\bm\alpha_P^*(\eta)$ has the spherical Lebesgue measure zero. Thus the $p$-th dual $k$-torsional measure $\widetilde{Q}_{k,{n-p}}(P,\cdot)$ is discrete and concentrated on $\{v_1,\cdots,v_m\}$. By Definition \ref{def34} with $p\neq n$ and equality (\ref{eq306}), we have
\begin{align*}
\widetilde{Q}_{k,{n-p}}(P,\cdot)=\sum_{i=1}^mc_i\delta_{v_i},
\end{align*}
where $\delta_{v_i}$ defines the delta measure concentrated at the point $v_i$ on $S^{n-1}$, and
\begin{align*}
c_i=\frac{1}{n-p}\int_{S^{n-1}\cap \triangle_i}\rho_P(v)^{p+1-k}|Du(r_P(v))|^{k+1}dv.
\end{align*}

\subsection{\bf Properties of the $p$-th dual $k$-torsional measure} In this subsection, we get some properties of the $p$-th dual $k$-torsional measure.
\begin{lemma}\label{lem36}
Let $\Omega\in\mathcal{K}_o^n$ and $p\neq n$, then the $p$-th dual $k$-torsional measure $\widetilde{Q}_{k,{n-p}}(\Omega,\cdot)$ is a Borel measure on $S^{n-1}$.
\end{lemma}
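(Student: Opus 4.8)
The plan is to verify the two defining properties of a Borel measure on $S^{n-1}$: that $\widetilde{Q}_{k,n-p}(\Omega,\emptyset)=0$, which is immediate since the integral in Definition \ref{def34} over the empty set vanishes, and that $\widetilde{Q}_{k,n-p}(\Omega,\cdot)$ is countably additive on Borel subsets of $S^{n-1}$. The natural route is to pull the measure back to the sphere via the reverse radial Gauss image $\bm\alpha^*_\Omega$, exactly as in the analogous argument for the dual curvature measure in \cite[Lemma 3.1]{HY}, and then use standard countable additivity of the Lebesgue integral.

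First I would recall that, by the basic properties of the radial Gauss map collected in Section \ref{sec2} (and the corresponding statements in \cite{HY}), for a family $\{\eta_j\}_{j\geq 1}$ of pairwise disjoint Borel subsets of $S^{n-1}$ the sets $\{\bm\alpha^*_\Omega(\eta_j)\}_{j\geq 1}$ are pairwise disjoint up to a set of spherical Lebesgue measure zero, and that $\bm\alpha^*_\Omega\big(\bigcup_j \eta_j\big)$ differs from $\bigcup_j \bm\alpha^*_\Omega(\eta_j)$ only by a set of spherical Lebesgue measure zero; moreover each $\bm\alpha^*_\Omega(\eta_j)$ is Lebesgue measurable. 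Since the density appearing in Definition \ref{def34}, namely
\begin{align*}
v\longmapsto \rho^{\overline{n-p}}_{\Omega}(v)\,\rho^{2p+1-n-k}_{\Omega}(v)\,|Du(r_\Omega(v))|^{k+1},
\end{align*}
is a nonnegative bounded measurable function on $S^{n-1}$ — here one uses that $\rho_\Omega$ is continuous and bounded away from $0$ and $\infty$ on $S^{n-1}$ because $\Omega\in\mathcal{K}_o^n$, and that $|Du|$ is bounded on $\overline{\Omega}$ since $u$ is a smooth admissible solution of \eqref{eq101} — the set function $\eta\mapsto \int_{\bm\alpha^*_\Omega(\eta)}(\cdots)\,dv$ inherits countable additivity from the countable additivity of integration against spherical Lebesgue measure, the measure-zero discrepancies above contributing nothing. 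This gives
\begin{align*}
\widetilde{Q}_{k,n-p}\Big(\Omega,\bigcup_{j\geq 1}\eta_j\Big)=\sum_{j\geq 1}\widetilde{Q}_{k,n-p}(\Omega,\eta_j),
\end{align*}
which together with $\widetilde{Q}_{k,n-p}(\Omega,\emptyset)=0$ and nonnegativity of the integrand establishes that $\widetilde{Q}_{k,n-p}(\Omega,\cdot)$ is a (finite, positive) Borel measure.

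The only genuinely delicate point — and the step I would expect to need the most care — is the measurability and near-disjointness of the reverse radial Gauss images $\bm\alpha^*_\Omega(\eta_j)$, i.e. that applying $\bm\alpha^*_\Omega$ does not destroy countable additivity modulo null sets. This is not new: it is precisely the content underlying \cite[Lemma 3.1]{HY}, and one simply cites it, noting that the overlaps $\bm\alpha^*_\Omega(\eta_i)\cap\bm\alpha^*_\Omega(\eta_j)$ for $i\neq j$ lie in the image of the non-regular part of $\partial\Omega$ and hence have spherical Lebesgue measure zero. Everything else is a routine application of the monotone/dominated convergence theorem to the partial sums $\sum_{j\leq N}\bm1_{\bm\alpha^*_\Omega(\eta_j)}$, using the $L^1$ bound furnished by boundedness of $\rho_\Omega$ and $|Du|$.
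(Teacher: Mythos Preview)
Your proposal is correct and follows essentially the same route as the paper: both verify $\widetilde{Q}_{k,n-p}(\Omega,\emptyset)=0$ trivially and then deduce countable additivity by pulling back through $\bm\alpha^*_\Omega$, invoking the measurability and almost-disjointness properties from \cite{HY} (the paper cites \cite[Lemmas 2.1--2.4]{HY} and the null set $\omega_\Omega$, where you cite \cite[Lemma 3.1]{HY}, but these encode the same facts), and then using countable additivity of the spherical Lebesgue integral. One small caveat: your parenthetical claim that the resulting measure is \emph{positive} relies on the density being nonnegative, which requires $p<n$; for $p>n$ the factor $\frac{1}{n-p}$ is negative and one only gets a signed Borel measure, though this does not affect the countable-additivity argument itself.
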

\begin{proof}
It is clear that $\widetilde{Q}_{k,{n-p}}(\Omega,\emptyset)=0$. We only need to prove the countable additivity. Namely, given a sequence of disjoint sets $\eta_i\subset S^{n-1}$, $i=1,2,\cdots$, with $\eta_i\cap \eta_j=\emptyset$ for $i\neq j$, the following formula holds:
\begin{align*}
\widetilde{Q}_{k,{n-p}}(\Omega,\cup_{i=1}^\infty\eta_i)=\sum_{i=1}^\infty\widetilde{Q}_{k,{n-p}}(\Omega,\eta_i).
\end{align*}
To this end, it follows from Definition \ref{def34} with $p\neq n$ that for each Borel set $\eta_i\subset S^{n-1}$, one has
\begin{align*}
\widetilde{Q}_{k,{n-p}}(\Omega,\eta_i)=\frac{1}{n-p}\int_{\alpha_\Omega^*(\eta_i)}\rho^{p+1-k}_{\Omega}(v)|Du(r_\Omega(v))|^{k+1}dv.
\end{align*}
By \cite[Lemmas 2.1-2.4]{HY}, the additivity for Lebesgue integral and fact that the spherical measure of $\omega_\Omega$ is zero, one has
\begin{align*}
\widetilde{Q}_{k,{n-p}}(\Omega,\cup_{i=1}^\infty\eta_i)=&\frac{1}{n-p}\int_{\alpha_\Omega^*(\cup_{i=1}^\infty\eta_i)}\rho^{p+1-k}_{\Omega}(v)|D u(r_\Omega(v))|^{k+1}dv\\
=&\frac{1}{n-p}\int_{\cup_{i=1}^\infty\alpha_\Omega^*(\eta_i)}\rho^{p+1-k}_{\Omega}(v)|Du(r_\Omega(v))|^{k+1}dv\\
=&\frac{1}{n-p}\int_{\cup_{i=1}^\infty\alpha_\Omega^*(\eta_i\setminus \omega_\Omega)}\rho^{p+1-k}_{\Omega}(v)|Du(r_\Omega(v))|^{k+1}dv\\
=&\frac{1}{n-p}\sum_{i=1}^\infty\int_{\alpha_\Omega^*(\eta_i\setminus \omega_\Omega)}\rho^{p+1-k}_{\Omega}(v)|Du(r_\Omega(v))|^{k+1}dv\\
=&\frac{1}{n-p}\sum_{i=1}^\infty\int_{\alpha_\Omega^*(\eta_i)}\rho^{p+1-k}_{\Omega}(v)|Du(r_\Omega(v))|^{k+1}dv\\
&-\frac{1}{n-p}\sum_{i=1}^\infty\int_{\alpha_\Omega^*(\omega_\Omega)}\rho^{p+1-k}_{\Omega}(v)|Du(r_\Omega(v))|^{k+1}dv\\
=&\frac{1}{n-p}\sum_{i=1}^\infty\int_{\alpha_\Omega^*(\eta_i)}\rho^{p+1-k}_{\Omega}(v)|Du(r_\Omega(v))|^{k+1}dv\\
=&\sum_{i=1}^\infty\widetilde{Q}_{k,{n-p}}(\Omega,\eta_i).
\end{align*}
The countable additivity holds and hence $\widetilde{Q}_{k,{n-p}}(\Omega,\cdot)$ is a Borel measure.
\end{proof}
\begin{lemma}\label{lem37}
Let $\Omega\in\mathcal{K}_o^n$ and $p\neq n$, then the $p$-th dual $k$-torsional measure $\widetilde{Q}_{k,{n-p}}(\Omega,\cdot)$ is absolutely continuous with respect to the $(n-k)$-th area measure $S_{n-k}(\Omega,\cdot)$.
\end{lemma}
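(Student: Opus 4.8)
The plan is to prove the stronger, concrete statement that $\widetilde{Q}_{k,n-p}(\Omega,\cdot)$ has a \emph{bounded} density with respect to $S_{n-k}(\Omega,\cdot)$; this at once gives absolute continuity. I would route the argument through the $k$-torsional measure $\mu_k^{tor}(\Omega,\cdot)$ and the radial--Gauss change of variables (\ref{eq301}) from Lemma \ref{lem31}, in two steps: (i) $\mu_k^{tor}(\Omega,\cdot)\ll S_{n-k}(\Omega,\cdot)$, and (ii) $\widetilde{Q}_{k,n-p}(\Omega,\cdot)\ll\mu_k^{tor}(\Omega,\cdot)$. Composing these settles the lemma.

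Step (i) is essentially a tautology given the setup: by Lemma \ref{lem31} together with (\ref{eq103}) we have $d\mu_k^{tor}(\Omega,x)=|Du(\nu_\Omega^{-1}(x))|^{k+1}\,dS_{n-k}(\Omega,x)$, and since $u$ is a smooth admissible solution of (\ref{eq101}) on the compact body $\overline{\Omega}$, the factor $|Du(\nu_\Omega^{-1}(x))|^{k+1}$ is bounded on $\partial\Omega$. A bounded nonnegative density forces $\mu_k^{tor}(\Omega,\cdot)\ll S_{n-k}(\Omega,\cdot)$.

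Step (ii), where the real computation sits, I would carry out as follows. Starting from Definition \ref{def34} (equivalently Lemma \ref{lem35}) and writing $\rho_\Omega(v)^{p+1-k}=\rho_\Omega(v)^{p-n}\cdot\rho_\Omega(v)^{n+1-k}$, for any Borel set $\eta\subset S^{n-1}$,
\begin{align*}
\widetilde{Q}_{k,n-p}(\Omega,\eta)=\frac{1}{n-p}\int_{S^{n-1}}\bm1_{\eta}(\alpha_\Omega(v))\,\rho_\Omega(v)^{p-n}\,\rho_\Omega(v)^{n+1-k}|Du(r_\Omega(v))|^{k+1}\,dv.
\end{align*}
For spherical--a.e. $v$ the point $r_\Omega(v)=\rho_\Omega(v)v$ is a regular boundary point with $r_\Omega(v)=\nu_\Omega^{-1}(\alpha_\Omega(v))$, so $\rho_\Omega(v)=|\nu_\Omega^{-1}(\alpha_\Omega(v))|$; and by the very definitions of $H$ and $J$ in Lemma \ref{lem31} one has $\rho_\Omega(v)^{n+1-k}|Du(r_\Omega(v))|^{k+1}=h_\Omega(\alpha_\Omega(v))\,H(v)^{k+1}$. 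Hence the integrand has the form $G(\alpha_\Omega(v))H(v)^{k+1}$ with $G(\xi)=\bm1_{\eta}(\xi)\,|\nu_\Omega^{-1}(\xi)|^{p-n}h_\Omega(\xi)$, and applying (\ref{eq301}) gives
\begin{align*}
\widetilde{Q}_{k,n-p}(\Omega,\eta)=\frac{1}{n-p}\int_{\eta}|\nu_\Omega^{-1}(\xi)|^{p-n}h_\Omega(\xi)\,d\mu_k^{tor}(\Omega,\xi).
\end{align*}
Since $\Omega\in\mathcal{K}_o^n$ is compact with the origin in its interior, $\rho_\Omega$, $h_\Omega$ and $|\nu_\Omega^{-1}|$ all lie between two positive constants, so the density $\tfrac{1}{n-p}|\nu_\Omega^{-1}(\xi)|^{p-n}h_\Omega(\xi)$ is bounded, which yields (ii). Combining (i) and (ii) gives $\widetilde{Q}_{k,n-p}(\Omega,\cdot)\ll\mu_k^{tor}(\Omega,\cdot)\ll S_{n-k}(\Omega,\cdot)$, and in fact exhibits the Radon--Nikodym derivative
\begin{align*}
\frac{d\widetilde{Q}_{k,n-p}(\Omega,\cdot)}{dS_{n-k}(\Omega,\cdot)}(\xi)=\frac{1}{n-p}|\nu_\Omega^{-1}(\xi)|^{p-n}h_\Omega(\xi)|Du(\nu_\Omega^{-1}(\xi))|^{k+1},
\end{align*}
which is precisely the density appearing in (\ref{eq105}).

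I do not expect a genuine obstacle, since the heavy lifting is already encoded in Lemma \ref{lem31} and Lemma \ref{lem35}; the only care required is measure-theoretic bookkeeping — performing the substitution only on the full-$dv$-measure subset of $S^{n-1}$ on which $\alpha_\Omega$ is single-valued (so that $r_\Omega=\nu_\Omega^{-1}\circ\alpha_\Omega$, and hence $G\circ\alpha_\Omega$, is well defined), invoking that the exceptional sets $\eta_\Omega$, $\omega_\Omega$ have spherical Lebesgue measure zero as recorded in \cite{HY}, and, when $p>n$, replacing the then-signed density by its absolute value, which does not affect the vanishing conclusion. One could instead argue directly: the radial parametrization of $S_{n-k}(\Omega,\cdot)$ has a density bounded below by a positive constant, so $S_{n-k}(\Omega,\eta)=0$ forces $\alpha_\Omega^{*}(\eta)$ to be $dv$-null, whence $\widetilde{Q}_{k,n-p}(\Omega,\eta)=0$ as an integral of a locally bounded function over a null set.
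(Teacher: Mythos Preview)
Your proof is correct and rests on the same change of variables (the radial--Gauss/Crofton relation $\rho^{n+1-k}\,dv=h\,dS_{n-k}$) that the paper uses; the paper simply applies it in one line to show $\widetilde{Q}_{k,n-p}(\Omega,\eta)=\frac{1}{n-p}\int_{\eta}\rho^{p-n}h\,|Du|^{k+1}\,dS_{n-k}=0$ whenever $S_{n-k}(\Omega,\eta)=0$, which is exactly the direct alternative you sketch at the end. Your slightly longer route through $\mu_k^{tor}$ is the same idea with an extra factorization, and it has the bonus of exhibiting the Radon--Nikodym derivative explicitly.
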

\begin{proof}
Let $\eta\subset S^{n-1}$ be such that $S_{n-k}(\Omega,\eta)=0$, using the Corfton formula, we conclude that
\begin{align*}
\widetilde{Q}_{k,{n-p}}(\Omega,\eta)=&\frac{1}{n-p}\int_{S^{n-1}}\bm1_{\alpha_\Omega^*(\eta)}\rho^{p+1-k}_{\Omega}(v)|Du(r_\Omega(v))|^{k+1}dv\\
=&\frac{1}{n-p}\int_{S^{n-1}}\bm1_{\alpha_\Omega^*(\eta)}\rho^{p-n}_{\Omega}(v)h_{\Omega}(x)|Du(r_\Omega(v))|^{k+1}dS_{n-k}(\Omega,x)=0,
\end{align*}
since we are integrating over a set of measure zero.
\end{proof}

\begin{lemma}\label{lem38}
If $\Omega_i\in\mathcal{K}_o^n$ with $\Omega_i\rightarrow\Omega_0\in\mathcal{K}_o^n$ and $p\neq n$, then $\widetilde{Q}_{k,{n-p}}(\Omega_i,\cdot)\rightarrow\widetilde{Q}_{k,{n-p}}(\Omega_0,\cdot)$, weakly.
\end{lemma}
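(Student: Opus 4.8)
The plan is to establish weak convergence of measures by testing against an arbitrary $g\in C(S^{n-1})$ and using the integral representation in Lemma \ref{lem35}. By that lemma,
\begin{align*}
\int_{S^{n-1}}g(\xi)\,d\widetilde{Q}_{k,n-p}(\Omega_i,\xi)=\frac{1}{n-p}\int_{S^{n-1}}g(\alpha_{\Omega_i}(v))\,\rho_{\Omega_i}(v)^{p+1-k}\,|Du_i(r_{\Omega_i}(v))|^{k+1}\,dv,
\end{align*}
where $u_i$ solves the $k$-Hessian problem \eqref{eq101} in $\Omega_i$. So it suffices to pass to the limit inside this integral and identify the limit with the same expression built from $\Omega_0$. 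First I would recall the standard convergence facts for $\Omega_i\to\Omega_0$ in the Hausdorff metric: the support functions $h_{\Omega_i}\to h_{\Omega_0}$ and the radial functions $\rho_{\Omega_i}\to\rho_{\Omega_0}$ uniformly on $S^{n-1}$; since all bodies lie in $\mathcal{K}_o^n$ with $\Omega_0$ having the origin in its interior, there are uniform bounds $0<c\le\rho_{\Omega_i}\le C<\infty$ for large $i$, so the powers $\rho_{\Omega_i}^{p+1-k}$ converge uniformly and are uniformly bounded. Also the radial Gauss maps satisfy $\alpha_{\Omega_i}(v)\to\alpha_{\Omega_0}(v)$ for every $v\notin\eta_{\Omega_0}$ (the singular set, of spherical measure zero), hence $g(\alpha_{\Omega_i}(v))\to g(\alpha_{\Omega_0}(v))$ a.e.\ by continuity of $g$.

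The main work — and the main obstacle — is the convergence of the gradient term $|Du_i(r_{\Omega_i}(v))|^{k+1}$. The plan is to invoke stability of solutions of the $k$-Hessian torsion problem \eqref{eq101} under domain convergence: since $\Omega_i\to\Omega_0$ with $\Omega_0$ a bounded convex domain and all $\Omega_i$ convex, the admissible solutions $u_i$ converge to $u_0$ uniformly on compact subsets of $\Omega_0$, and by interior gradient estimates for the $k$-Hessian equation together with boundary regularity (the solutions extend continuously to $\overline{\Omega_i}$ with $u_i=0$ on $\partial\Omega_i$), one obtains $|Du_i|\to|Du_0|$ locally uniformly in $\Omega_0$ and uniform boundedness of $|Du_i|$ on a neighborhood of $\partial\Omega_0$ in terms of a uniform inner/outer ball condition. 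Concretely, one uses that $r_{\Omega_i}(v)=\rho_{\Omega_i}(v)v\to r_{\Omega_0}(v)v=r_{\Omega_0}(v)$ uniformly, so the evaluation points converge; combined with the local uniform convergence $Du_i\to Du_0$ one gets $Du_i(r_{\Omega_i}(v))\to Du_0(r_{\Omega_0}(v))$ for each fixed $v$. A uniform bound $|Du_i(r_{\Omega_i}(v))|\le C$ independent of $i$ and $v$ follows from the uniform geometry of the $\Omega_i$ (barriers built from the explicit radial solution on balls sandwiching $\Omega_i$). These are exactly the ingredients that are already used implicitly in the variational formula Lemma \ref{lem31} and in \cite{ZX2}, so I would cite that stability result rather than reprove it.

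Having established these two facts — pointwise a.e.\ convergence of the full integrand $g(\alpha_{\Omega_i}(v))\rho_{\Omega_i}(v)^{p+1-k}|Du_i(r_{\Omega_i}(v))|^{k+1}$ and a uniform $L^\infty$ bound on $S^{n-1}$ (a finite measure space) — the dominated convergence theorem yields
\begin{align*}
\lim_{i\to\infty}\int_{S^{n-1}}g(\xi)\,d\widetilde{Q}_{k,n-p}(\Omega_i,\xi)=\frac{1}{n-p}\int_{S^{n-1}}g(\alpha_{\Omega_0}(v))\,\rho_{\Omega_0}(v)^{p+1-k}\,|Du_0(r_{\Omega_0}(v))|^{k+1}\,dv=\int_{S^{n-1}}g(\xi)\,d\widetilde{Q}_{k,n-p}(\Omega_0,\xi),
\end{align*}
again by Lemma \ref{lem35} applied to $\Omega_0$. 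Since $g\in C(S^{n-1})$ was arbitrary, this is precisely weak convergence $\widetilde{Q}_{k,n-p}(\Omega_i,\cdot)\to\widetilde{Q}_{k,n-p}(\Omega_0,\cdot)$, completing the proof. The only delicate point to flag in the write-up is that the a.e.\ convergence $\alpha_{\Omega_i}\to\alpha_{\Omega_0}$ off $\eta_{\Omega_0}$ requires $\Omega_0\in\mathcal{K}_o^n$ (so that $\eta_{\Omega_0}$ has measure zero), which is part of the hypothesis.
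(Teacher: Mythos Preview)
Your proposal is correct and follows essentially the same route as the paper: test against an arbitrary $g\in C(S^{n-1})$, invoke the integral representation of Lemma~\ref{lem35}, establish pointwise a.e.\ convergence of each factor together with uniform $L^\infty$ bounds, and conclude by dominated convergence. The only cosmetic difference is in how the gradient term is controlled: the paper simply cites continuity of $Du$ from \cite[Theorem~3.1]{WXJ1} to get the uniform bound $|Du(r_{\Omega_i}(v))|\le C_1$, whereas you sketch the barrier argument with inscribed/circumscribed balls (which is in fact the argument the paper itself uses later, in the proof of Lemma~\ref{lem54}).
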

\begin{proof}
Let $g:S^{n-1}\rightarrow\mathbb{R}$ be continuous. From (\ref{eq305}), we know that
\begin{align*}
\int_{S^{n-1}}g(\xi)d\widetilde{Q}_{k,{n-p}}(\Omega_i,\xi)=\frac{1}{n-p}\int_{S^{n-1}}g(\alpha_{\Omega_i}(v))\rho_{\Omega_i}(v)^{p+1-k}|Du(r_{\Omega_i}(v))|^{k+1}dv,
\end{align*}
for all $i$. The convergence $\Omega_i\rightarrow\Omega_0$ with respect to the Hausdorff metric implies that $\rho(\Omega_i,v)\rightarrow\rho(\Omega_0,v)$ uniformly on $S^{n-1}$. Since $\Omega_i, \Omega_0\in\mathcal{K}_o^n$, there are positive constants $c$ and $C$ such that for all $v\in S^{n-1}$ and all $i=1,2,\cdots$,
\begin{align*}
c\leq \rho(\Omega_i,v),\rho(\Omega_0,v)\leq C.
\end{align*}
For any given continuous function $g:S^{n-1}\rightarrow\mathbb{R}$ that there is a positive constant $I$ such that for any $i=1,2,\cdots$,
\begin{align*}
|g(\alpha_{\Omega_i})\rho^{p+1-k}(\Omega_i,\cdot)|\leq I \quad \text{and} \quad |g(\alpha_{\Omega_0})\rho^{p+1-k}(\Omega_0,\cdot)|\leq I.
\end{align*}
From $\Omega_i\rightarrow \Omega_0$ and continuity of $r_\Omega$, we know that $r(\Omega_i,v)\rightarrow r(\Omega_0,v)$. The continuity of $Du$ from \cite[Theorem 3.1]{WXJ1} on $\Omega_i,\Omega_0\in\mathcal{K}_o^n$ implies
\begin{align*}|D u(r_{\Omega_i}(v))|\leq C_1\quad\text{and}\quad|D u(r_{\Omega_0}(v))|\leq C_1.
\end{align*}
Thus the desired result directly from \cite[Lemma 2.2]{HY} and dominated convergence theorem:
\begin{align*}
&\frac{1}{n-p}\int_{S^{n-1}}g(\alpha_{\Omega_i}(v))\rho_{\Omega_i}(v)^{p+1-k}|D u(r_{\Omega_i}(v))|^{k+1}dv\\
&\rightarrow\frac{1}{n-p}\int_{S^{n-1}}g(\alpha_{\Omega_0}(v))\rho_{\Omega_0}(v)^{p+1-k}|Du(r_{\Omega_0}(v))|^{k+1}dv,
\end{align*}
this implies that $\widetilde{Q}_{k,{n-p}}(\Omega_i,\cdot)\rightarrow\widetilde{Q}_{k,{n-p}}(\Omega_0,\cdot)$, weakly.
\end{proof}
\subsection{Variational formulas for the $p$-th dual $k$-torsional rigidity}
\begin{theorem}\label{the39}
Let $\eta\subset S^{n-1}$ be a closed set not contained in any closed hemisphere of $S^{n-1}$, $\rho_0:\eta\rightarrow (0,\infty)$ and $g:\eta\rightarrow\mathbb{R}$ be continuous. If $\langle\rho_s\rangle$ is a logarithmic family of the convex hulls of $(\rho_0,g)$, then for $p\neq n$ and $1\leq k\leq n-1$,
\begin{align*}
\lim_{s\rightarrow 0}\frac{\widetilde{Q}_{k,{n-p}}(\langle\rho_s\rangle^*)-\widetilde{Q}_{k,{n-p}}(\langle\rho_0\rangle^*)}{s}=-(p+1-k)(k+2)\int_\eta g(\xi)d\widetilde{Q}_{k,{n-p}}(\langle\rho_0\rangle^*,\xi).
\end{align*}
\end{theorem}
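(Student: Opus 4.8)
The plan is to differentiate the defining integral of $\widetilde{Q}_{k,n-p}(\langle\rho_s\rangle^*)$ under the integral sign, after rewriting it in a form adapted to Lemma \ref{lem21}. Write $K_s=\langle\rho_s\rangle^*$; by polarity $\rho_{K_s}(v)=h_{\langle\rho_s\rangle}(v)^{-1}$ and $\alpha^*_{\langle\rho_0\rangle}=\alpha_{K_0}$, so by (\ref{eq304})
\begin{align*}
\widetilde{Q}_{k,n-p}(\langle\rho_s\rangle^*)=\frac{1}{n-p}\int_{S^{n-1}}h_{\langle\rho_s\rangle}(v)^{-(p+1-k)}\,|Du_{K_s}(r_{K_s}(v))|^{k+1}\,dv ,
\end{align*}
with $u_{K_s}$ the admissible solution of (\ref{eq101}) in $K_s$. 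The factor $h_{\langle\rho_s\rangle}(v)^{-(p+1-k)}=\rho_{K_s}(v)^{p+1-k}$ is handled directly by Lemma \ref{lem21} with exponent $p+1-k$: its $s$-derivative at $s=0$ is $-(p+1-k)h_{\langle\rho_0\rangle}(v)^{-(p+1-k)}g(\alpha^*_{\langle\rho_0\rangle}(v))$, with an $O(|s|)$ bound uniform in $v$. The genuine difficulty is the torsional factor $|Du_{K_s}(r_{K_s}(v))|^{k+1}$, which moves together with the domain $K_s$.

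To control it I would route through the $k$-torsional rigidity, whose first variation freezes the solution $u$ (Lemma \ref{lem31} and its logarithmic-family version, Proposition \ref{pro32}). By Corollary \ref{cor33} with $\Omega_2=\mathcal{B}$ (here $n-p\neq0$ is used),
\begin{align*}
\widetilde{Q}_{k,n-p}(K)=\frac{\partial}{\partial t}\Big|_{t=0}\widetilde{T}_k\big(\langle\rho_{K\widetilde{+}_{n-p}t\cdot\mathcal{B}}\rangle\big),\qquad K\in\mathcal{K}_o^n ,
\end{align*}
hence $\widetilde{Q}_{k,n-p}(\langle\rho_s\rangle^*)=\partial_t\big|_{t=0}\,\widetilde{T}_k(\langle\rho_{\langle\rho_s\rangle^*\widetilde{+}_{n-p}t\cdot\mathcal{B}}\rangle)$, and differentiating in $s$ and interchanging $\partial_s$ with $\partial_t$ turns the question into a second-order variation of $\widetilde{T}_k$. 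For the inner ($s$) differentiation the relevant family is a logarithmic family of convex hulls whose log-variation is read off from Lemma \ref{lem21} with exponent $n-p$, so Proposition \ref{pro32} applies; for the outer ($t$) differentiation one applies Proposition \ref{pro32} again (the $t$-family is again such a logarithmic family) together with the product rule for the explicit rational factor in $\rho$ produced at the inner step, and with the variation of the $k$-torsional measure $\mu^{tor}_k$ and of the (reverse) radial Gauss maps along the $t$-family. Collecting all contributions — one factor $p+1-k$ coming from Lemma \ref{lem21} applied to the support-function term and the remaining factor $k+2$ emerging from the variation of the torsional term $|Du|^{k+1}$ — leaves the constant $-(p+1-k)(k+2)$ in front of $\frac{1}{n-p}\int_{S^{n-1}}g(\alpha^*_{\langle\rho_0\rangle}(v))\,\rho_{\langle\rho_0\rangle^*}(v)^{p+1-k}\,|Du_{\langle\rho_0\rangle^*}(r_{\langle\rho_0\rangle^*}(v))|^{k+1}\,dv$.

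Finally, this last integral is converted into a measure integral by the change-of-variables identity (\ref{eq305}) of Lemma \ref{lem35} applied to $\Omega=\langle\rho_0\rangle^*$, using $g\circ\alpha^*_{\langle\rho_0\rangle}=g\circ\alpha_{\langle\rho_0\rangle^*}$ and $\rho_{\langle\rho_0\rangle^*}^{p+1-k}=h_{\langle\rho_0\rangle}^{-(p+1-k)}$; this gives exactly $-(p+1-k)(k+2)\int_\eta g(\xi)\,d\widetilde{Q}_{k,n-p}(\langle\rho_0\rangle^*,\xi)$. Every passage to the limit under the integral is justified by dominated convergence, using the uniform bound of Lemma \ref{lem21}, the two-sided bounds on $\rho$ and $h$ for bodies in $\mathcal{K}_o^n$, and the boundedness and continuity of $r_\Omega$ and of $|Du|$ under $C^2_+$ perturbations of the domain (as in the proof of Lemma \ref{lem38}, via \cite[Theorem 3.1]{WXJ1}); a standard approximation then removes the smoothness assumption. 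I expect the main obstacle to be precisely the torsional factor $|Du_{K_s}(r_{K_s}(v))|^{k+1}$: since $\widetilde{Q}_{k,n-p}$ is not itself the value of a minimization problem, $u$ cannot be frozen directly as it is for $\widetilde{T}_k$, and making the induced second-order variation of $\widetilde{T}_k$ rigorous — tracking the variation of $\mu^{tor}_k$ and of the radial and reverse radial Gauss maps and justifying the exchange of $\partial_s$ and $\partial_t$ — is where the real work lies, relying on the interior/boundary regularity theory and the $C^1$-stability of solutions of the $k$-Hessian equation with respect to the domain.
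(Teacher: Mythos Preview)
Your setup matches the paper's: rewrite $\widetilde{Q}_{k,n-p}(\langle\rho_s\rangle^*)$ via polarity as an integral of $h_{\langle\rho_s\rangle}^{-(p+1-k)}|Du_{K_s}(r_{K_s}(v))|^{k+1}$, then use Lemma~\ref{lem21} on the support-function factor and Lemma~\ref{lem35} at the end to convert back to the measure integral. Where you diverge from the paper is exactly at the torsional factor $|Du_{K_s}(r_{K_s}(v))|^{k+1}$, and here your proposed route is both different and, as you yourself flag, incomplete.

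The paper does \emph{not} route through a second-order variation of $\widetilde{T}_k$. Instead it differentiates $|Du(h_{\langle\rho_s\rangle}^{-1}(v)\,v)|^{k+1}$ directly in $s$ using $|Du|=-Du\cdot v$ on the boundary and the chain rule, obtaining terms in $D^2u$ (from moving the boundary point) and in $D\dot u$ (from the domain-variation of the solution). It then packages this derivative as $\mathcal{L}(\phi)$, where $\phi$ is the infinitesimal variation of $\rho_{K_s}^{p+1-k}$ and $\mathcal{L}=\mathcal{L}_1+\mathcal{L}_2$ is shown to be self-adjoint on $S^{n-1}$ (the $\mathcal{L}_2$ part by the argument in \cite{HY0}). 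Self-adjointness turns $\int \rho_{K_0}^{p+1-k}\,\mathcal{L}(\phi)$ into $\int \phi\,\mathcal{L}(\rho_{K_0}^{p+1-k})$, and the $(k{+}1)$-homogeneity of $|Du|^{k+1}$ gives $\mathcal{L}(\rho_{K_0}^{p+1-k})=(k+1)|Du|^{k+1}$. Adding this $(k+1)$ to the $1$ coming from the support-function factor yields the $(k+2)$, and the $-(p+1-k)$ comes from Lemma~\ref{lem21} as you noted.

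Your plan to write $\widetilde{Q}_{k,n-p}(K)=\partial_t|_{t=0}\widetilde{T}_k(\langle\rho_{K\widetilde{+}_{n-p}t\cdot\mathcal{B}}\rangle)$ and then swap $\partial_s,\partial_t$ does not sidestep the core difficulty: after the inner Proposition~\ref{pro32} step you still land on an integral containing $|Du|^{k+1}$ for the $t$-dependent body, and differentiating that in $t$ forces you back to exactly the computation the paper carries out. Moreover, justifying the exchange of derivatives and the variation of $\mu^{tor}_k$ requires at least as much regularity input as the direct approach. So your route is not wrong in spirit, but it is strictly longer and leaves the decisive step (the self-adjointness/homogeneity mechanism that produces the factor $k+2$) unperformed; the paper's direct computation is the cleaner way to close the argument.
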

\begin{proof}This proof is similar to \cite[Theorem 4.4]{HY}, however, due to the existence of $|Du|$, it is even more difficult than the proof of \cite[Theorem 4.4]{HY}. Here we omit \cite[page 364: lines 1-22]{HY} to only write the calculation parts. From (\ref{eq304}) and Lemma \ref{lem21}, we have
\begin{align*}
&\lim_{s\rightarrow 0}\frac{\widetilde{Q}_{k,{n-p}}(\langle\rho_s\rangle^*)-\widetilde{Q}_{k,{n-p}}(\langle\rho_0\rangle^*)}{s}=\frac{d}{ds}\widetilde{Q}_{k,{n-p}}(\langle\rho_s\rangle^*)\bigg|_{s=0}\\
=&\frac{1}{n-p}\!\int_{S^{n-1}}\!\bigg(\!\frac{d}{ds}\rho^{p+1-k}_{\langle\rho_s\rangle^*}(v)\bigg|_{s=0}\!|D u(r_{\langle\rho_0\rangle^*}(v))|^{k+1}\!+\!\rho^{p+1-k}_{\langle\rho_0\rangle^*}(v)\frac{d}{ds}\!|Du(r_{\langle\rho_s\rangle^*}(v))|^{k+1}\bigg|_{s=0}\bigg)\!dv\\
=&\frac{1}{n-p}\!\int_{S^{n-1}}\!\bigg(\!\frac{d}{ds}\rho^{p+1-k}_{\langle\rho_s\rangle^*}(v)\bigg|_{s=0}\!|D u(r_{\langle\rho_0\rangle^*}(v))|^{k+1}\!+\!\rho^{p+1-k}_{\langle\rho_0\rangle^*}(v)\frac{d}{ds}\!|Du(\rho_{\langle\rho_s\rangle^*}(v)v)|^{k+1}\bigg|_{s=0}\bigg)\!dv\\
=&\frac{1}{n-p}\!\int_{S^{n-1}}\!\bigg(\!\frac{d}{ds}h^{-({p+1-k})}_{\langle\rho_s\rangle}(v)\bigg|_{s=0}\!|D u(r_{\langle\rho_0\rangle^*}(v))|^{k+1}\!+\!\rho^{p+1-k}_{\langle\rho_0\rangle^*}(v)\frac{d}{ds}\!|D u(h^{-1}_{\langle\rho_s\rangle}(v)v)|^{k+1}\bigg|_{s=0}\bigg)\!dv\\
=&\frac{1}{n-p}\int_{S^{n-1}}\bigg(\lim_{s\rightarrow0}\frac{h^{-({p+1-k})}_{\langle\rho_s\rangle}(v)-h^{-({p+1-k})}_{\langle\rho_0\rangle}(v)}{s}|D u(r_{\langle\rho_0\rangle^*}(v))|^{k+1}\\
&+\rho^{p+1-k}_{\langle\rho_0\rangle^*}(v)\frac{d}{ds}|D u(h^{-1}_{\langle\rho_s\rangle}(v)v)|^{k+1}\bigg|_{s=0}\bigg)dv\\
=&\frac{1}{n-p}\int_{S^{n-1}\backslash \eta_0}-(p+1-k)h^{-({p+1-k})}_{\langle\rho_0\rangle}(v)g(\alpha^*_{\langle\rho_0\rangle}(v))|D u(r_{\langle\rho_0\rangle^*}(v))|^{k+1}dv\\
&+\frac{1}{n-p}\int_{S^{n-1}}\rho^{p+1-k}_{\langle\rho_0\rangle^*}(v)\frac{d}{ds}|D u(h^{-1}_{\langle\rho_s\rangle}(v)v)|^{k+1}\bigg|_{s=0}dv\\
=&\frac{1}{n-p}\int_{S^{n-1}\backslash \eta_0}-(p+1-k)\rho^{p+1-k}_{\langle\rho_0\rangle^*}(v)g(\alpha^*_{\langle\rho_0\rangle}(v))|D u(r_{\langle\rho_0\rangle^*}(v))|^{k+1}dv\\
&+\frac{1}{n-p}\int_{S^{n-1}}\rho^{p+1-k}_{\langle\rho_0\rangle^*}(v)\frac{d}{ds}|D u(h^{-1}_{\langle\rho_s\rangle}(v)v)|^{k+1}\bigg|_{s=0}dv.
\end{align*}
Recall that
\begin{align*}|Du(h^{-1}_{\langle\rho_s\rangle}(v)v)|=-Du(h^{-1}_{\langle\rho_s\rangle}(v)v)\cdot v.
\end{align*}
Thus
\begin{align*}
&\frac{d}{ds}|Du(h^{-1}_{\langle\rho_s\rangle}(v)v)|^{k+1}\bigg|_{s=0}\\
=&(k+1)|Du(h^{-1}_{\langle\rho_0\rangle}(v)v)|^k\frac{d}{ds}|Du(h^{-1}_{\langle\rho_s\rangle}(v)v)|\bigg|_{s=0}\\
=&-(k+1)|D u(h^{-1}_{\langle\rho_0\rangle}(v)v)|^k\bigg((D ^2u(h^{-1}_{\langle\rho_0\rangle}(v)v)\frac{d}{ds}(h^{-1}_{\langle\rho_s\rangle}(v)v))\cdot v+(D \dot{u}(h^{-1}_{\langle\rho_0\rangle}(v)v))\cdot v\bigg)\\
=&\!-(k+1)\!|Du\!(h^{-1}_{\langle\rho_0\rangle}(v)v)|^k\!\bigg(\!(D ^2u(h^{-1}_{\langle\rho_0\rangle}(v)v)\![-h^{-1}_{\langle\rho_0\rangle}(v)\!g(\alpha^*_{\langle\rho_0\rangle}(v))]v)\!\cdot v\!+\!(D \dot{u}(h^{-1}_{\langle\rho_0\rangle}(v)v))\!\cdot \!v\!\bigg)\\
=&-(k+1)|D u(r_{\langle\rho_0\rangle^*}(v))|^k\bigg(D ^2u(r_{\langle\rho_0\rangle^*}(v))[-\rho_{\langle\rho_0\rangle^*}(v)g(\alpha^*_{\langle\rho_0\rangle}(v))]+(D \dot{u}(r_{\langle\rho_0\rangle^*}(v)))\cdot v\bigg)\\
=&(k+1)|D u(r_{\langle\rho_0\rangle^*}(v))|^k D ^2u(r_{\langle\rho_0\rangle^*}(v))\rho_{\langle\rho_0\rangle^*}(v)g(\alpha^*_{\langle\rho_0\rangle}(v))\\
&-(k+1)|D u(r_{\langle\rho_0\rangle^*}(v))|^k(D \dot{u}(r_{\langle\rho_0\rangle^*}(v)))\cdot v.\\
\end{align*}

Denote (see \cite{CO} or \cite{HY0})
\begin{align*}
\frac{d}{ds}|D u(r_{\langle\rho_s\rangle^*}(v))|^{k+1}\bigg|_{s=0}=&\frac{d}{ds}|D u(\rho_{\langle\rho_s\rangle^*}(v)v)|^{k+1}\bigg|_{s=0}\\
=&\mathcal{L}(-(p+1-k)g(\alpha^*_{\langle\rho_0\rangle}(v))\rho^{p+1-k}_{\langle\rho_0\rangle^*}(v))\\
=&\mathcal{L}_1(-(p+1-k)g(\alpha^*_{\langle\rho_0\rangle}(v))\rho^{p+1-k}_{\langle\rho_0\rangle^*}(v))\\
&+\mathcal{L}_2(-(p+1-k)g(\alpha^*_{\langle\rho_0\rangle}(v))\rho^{p+1-k}_{\langle\rho_0\rangle^*}(v)) \end{align*}
with
\begin{align*}
&\mathcal{L}_1(-(p+1-k)g(\alpha^*_{\langle\rho_0\rangle}(v))\rho^{p+1-k}_{\langle\rho_0\rangle^*}(v))\\
&=(k+1)|Du(r_{\langle\rho_0\rangle^*}(v))|^kD ^2u(r_{\langle\rho_0\rangle^*}(v))\rho_{\langle\rho_0\rangle^*}(v)g(\alpha^*_{\langle\rho_0\rangle^*}(v)),
\end{align*}
and
\begin{align*}
&\mathcal{L}_2(-(p+1-k)g(\alpha^*_{\langle\rho_0\rangle}(v))\rho^{p+1-k}_{\langle\rho_0\rangle^*}(v))\\
&=-(k+1)|D u(r_{\langle\rho_0\rangle^*}(v))|^k(D \dot{u}(r_{\langle\rho_0\rangle^*}(v)))\cdot v.
\end{align*}
We can see that $\mathcal{L}$ is a self-adjoint operator on $S^{n-1}$, i.e.
\begin{align*}
\int_{S^{n-1}}\varphi^1\mathcal{L}\varphi^2=\int_{S^{n-1}}\varphi^2\mathcal{L}\varphi^1.
\end{align*}
Indeed, $\mathcal{L}_1$ is self-adjoint obviously. In addition, according to the conclusion of \cite[page 69]{HY0}, we know that $\mathcal{L}_2$ is self-adjoint.

By the $(k+1)$-homogeneity of $l(u)=|D u|^{k+1}$, it yields that
\begin{align*}
\mathcal{L}(\rho^{p+1-k}_{\langle\rho_0\rangle^*})=(k+1)|D u|^{k+1}.
\end{align*}
Hence based on the above calculations and Definition \ref{def34} with $p\neq n$, we get
\begin{align*}
&\lim_{s\rightarrow 0}\frac{\widetilde{Q}_{k,{n-p}}(\langle\rho_s\rangle^*)-\widetilde{Q}_{k,{n-p}}(\langle\rho_0\rangle^*)}{s}=\frac{d}{ds}\widetilde{Q}_{k,{n-p}}(\langle\rho_s\rangle^*)\bigg|_{s=0}\\
=&\frac{1}{n-p}\int_{S^{n-1}\backslash \eta_0}\bigg(-(p+1-k)\rho^{p+1-k}_{\langle\rho_0\rangle^*}(v)g(\alpha^*_{\langle\rho_0\rangle^*}(v))|D u(r_{\langle\rho_0\rangle^*}(v))|^{k+1}\\
&+\rho^{p+1-k}_{\langle\rho_0\rangle^*}(v)\mathcal{L}(-(p+1-k)g(\alpha^*_{\langle\rho_0\rangle}(v))\rho^{p+1-k}_{\langle\rho_0\rangle^*}(v))\bigg)dv\\
=&\frac{1}{n-p}\int_{S^{n-1}\backslash \eta_0}\bigg(-(p+1-k)\rho^{p+1-k}_{\langle\rho_0\rangle^*}(v)g(\alpha^*_{\langle\rho_0\rangle^*}(v))|D u(r_{\langle\rho_0\rangle^*}(v))|^{k+1}\\
&-(p+1-k)g(\alpha^*_{\langle\rho_0\rangle}(v))\rho^{p+1-k}_{\langle\rho_0\rangle^*}\mathcal{L}(\rho^{p+1-k}_{\langle\rho_0\rangle^*}(v))\bigg)dv\\
=&\frac{1}{n-p}\int_{S^{n-1}\backslash \eta_0}\bigg(-(p+1-k)\rho^{p+1-k}_{\langle\rho_0\rangle^*}(v)g(\alpha^*_{\langle\rho_0\rangle^*}(v))|D u(r_{\langle\rho_0\rangle^*}(v))|^{k+1}\\
&-(p+1-k)(k+1)g(\alpha^*_{\langle\rho_0\rangle}(v))\rho^{p+1-k}_{\langle\rho_0\rangle^*}|Du(r_{\langle\rho_0\rangle^*}(v))|^{k+1}\bigg)dv\\
=&\frac{-(p+1-k)(k+2)}{n-p}\int_{S^{n-1}\backslash \eta_0}\rho^{p+1-k}_{\langle\rho_0\rangle^*}(v)g(\alpha^*_{\langle\rho_0\rangle^*}(v))|D u(r_{\langle\rho_0\rangle^*}(v))|^{k+1}dv\\
=&\frac{-(p+1-k)(k+2)}{n-p}\int_{S^{n-1}\setminus\eta_0}(\hat{g}\bm1_\eta)(\alpha_{\langle\rho_0\rangle^*}(v))\rho^{p+1-k}_{\langle\rho_0\rangle^*}(v)|D u(r_{\langle\rho_0\rangle^*}(v))|^{k+1}dv\\
=&-(p+1-k)(k+2)\int_{S^{n-1}\setminus\eta_0}(\hat{g}\bm1_\eta)(\xi)d\widetilde{Q}_{k,{n-p}}(\langle\rho_0\rangle^*,\xi)\\
=&-(p+1-k)(k+2)\int_\eta g(\xi)d\widetilde{Q}_{k,{n-p}}(\langle\rho_0\rangle^*,\xi).
\end{align*}
Here $g(\alpha_{\langle\rho_0\rangle^*}(v))=(\hat{g}\bm1_\eta)(\alpha_{\langle\rho_0\rangle^*}(v))$, it has been proven that $g$ can be extended to a continuous function $\hat{g}:S^{n-1}\rightarrow\mathbb{R}$, (see \cite[page 364]{HY}) for all $v\in S^{n-1}\setminus \eta_0$.
\end{proof}
\begin{theorem}\label{the310}
Let $\Omega\in\mathcal{K}^n_o$ and $f:S^{n-1}\rightarrow\mathbb{R}$ be continuous. If $[h_s]$ is a logarithmic family of the Wulff shapes with respect to $(h_\Omega, f)$, then for $p\neq n$ and $1\leq k\leq n-1$,
\begin{align*}
\lim_{s\rightarrow 0}\frac{\widetilde{Q}_{k,{n-p}}([h_s])-\widetilde{Q}_{k,{n-p}}(\Omega)}{s}=(p+1-k)(k+2)\int_{S^{n-1}}f(\xi)d\widetilde{Q}_{k,{n-p}}(\Omega,\xi).
\end{align*}
\end{theorem}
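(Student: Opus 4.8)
The plan is to reduce the statement about the logarithmic family of Wulff shapes $[h_s]$ to the already-established variational formula for the logarithmic family of convex hulls in Theorem \ref{the39}, via the duality $[h_s]^* = \langle 1/h_s \rangle$ recorded in Section \ref{sec2}. First I would set $\rho_s = 1/h_s$ on $S^{n-1}$; since $\log h_s(v) = \log h_\Omega(v) + s f(v) + o(s,v)$ uniformly, we get $\log \rho_s(v) = \log \rho_0(v) - s f(v) + o(s,v)$ where $\rho_0 = 1/h_\Omega = \rho_{\Omega^*}$, so $\langle \rho_s \rangle$ is precisely a logarithmic family of convex hulls generated by $(\rho_0, g)$ with $g = -f$. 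Because $\Omega \in \mathcal{K}_o^n$, the set $S^{n-1}$ (the domain of $f$) is certainly not contained in any closed hemisphere, so the hypotheses of Theorem \ref{the39} are met.

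Next I would apply Theorem \ref{the39} to this family: writing $\langle \rho_s \rangle^* = [h_s]^{**} = [h_s]$ (the Wulff shape $[h_s]$ is convex and contains the origin, hence equals its own bidual), and similarly $\langle \rho_0 \rangle^* = \Omega$, the theorem yields
\begin{align*}
\lim_{s\to 0}\frac{\widetilde{Q}_{k,n-p}([h_s]) - \widetilde{Q}_{k,n-p}(\Omega)}{s}
= -(p+1-k)(k+2)\int_{S^{n-1}} g(\xi)\, d\widetilde{Q}_{k,n-p}(\Omega,\xi).
\end{align*}
Substituting $g = -f$ flips the sign and produces exactly the claimed formula $(p+1-k)(k+2)\int_{S^{n-1}} f(\xi)\, d\widetilde{Q}_{k,n-p}(\Omega,\xi)$.

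The one point requiring genuine care — and the main obstacle — is bookkeeping the duality correctly: I must verify that $\langle \rho_s \rangle^*$ in the statement of Theorem \ref{the39} really corresponds to $[h_s]$ here, i.e. that the operation $\rho \mapsto \langle \rho \rangle$ followed by polar duality inverts $h \mapsto [h]$ on the relevant class. Using $[f]^* = \langle 1/f \rangle$ from Section \ref{sec2} applied with $f$ replaced by $h_s$ gives $[h_s]^* = \langle 1/h_s \rangle = \langle \rho_s \rangle$, hence $\langle \rho_s \rangle^* = [h_s]^{**} = [h_s]$ since $[h_s] \in \mathcal{K}_o^n$. One should also note that the continuity of $\rho_0 = 1/h_\Omega$ and of $g=-f$ on $S^{n-1}$ is automatic from $\Omega \in \mathcal{K}_o^n$ and the continuity of $f$, and that the error term transforms correctly: $\log(1/h_s) = -\log h_\Omega - sf - o(s,v)$, with the same uniform smallness of the remainder, so indeed $\langle \rho_s \rangle = \langle \rho_0, -f, s\rangle$ in the notation of Section \ref{sec2}. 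With these identifications in place the result is immediate from Theorem \ref{the39}.
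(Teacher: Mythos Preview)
Your proposal is correct and takes essentially the same approach as the paper: both reduce the Wulff-shape variation to the convex-hull variation of Theorem \ref{the39} via the duality $[h_s]=\langle 1/h_s\rangle^*$ with $g=-f$, which is exactly the identification $[\Omega,f,o,s]=\langle\Omega^*,-f,-o,s\rangle^*$ invoked in the paper (citing \cite[Lemma 2.8]{HY}). The paper also writes out an intermediate integral computation before quoting Theorem \ref{the39}, but this is just an unpacking of the same reduction and adds nothing beyond what you have.
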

\begin{proof}
From the definition of the $p$-th dual $k$-torsional rigidity (\ref{eq304}) and Theorem \ref{the39}, we attain
\begin{align*}
&\lim_{s\rightarrow 0}\frac{\widetilde{Q}_{k,{n-p}}([h_s])-\widetilde{Q}_{k,{n-p}}(\Omega)}{s}=\frac{d}{ds}\widetilde{Q}_{k,{n-p}}([h_s])\bigg|_{s=0}\\
=&\frac{1}{n-p}\int_{S^{n-1}}\bigg(\frac{d}{ds}\rho^{p+1-k}_{[h_s]}(v)\bigg|_{s=0}|Du(r_\Omega(v))|^{k+1}+\rho^{p+1-k}_\Omega(v)\frac{d}{ds}|D u(r_{[h_s]}(v))|^{k+1}\bigg|_{s=0}\bigg)dv\\
=&\frac{1}{n-p}\!\int_{S^{n-1}}\!\bigg(\!\lim_{s\rightarrow0}\frac{\rho^{p+1-k}_{[h_s]}\!(v)\!-\!\rho^{p+1-k}_\Omega(v)}{s}\!|\!D u(r_{\Omega}\!(v))|^{k+1}\!+\!\rho^{p+1-k}_\Omega(v)\frac{d}{ds}\!|\!Du(r_{\![h_s]}(v))|^{k+1}\bigg|_{s=0}\bigg)\!dv\\
=&\frac{(p+1-k)(k+2)}{n-p}\int_{S^{n-1}}f(\alpha_\Omega(v))\rho^{p+1-k}_\Omega(v)|Du(r_\Omega(v))|^{k+1}dv\\
=&(p+1-k)(k+2)\int_{S^{n-1}}f(\xi)d\widetilde{Q}_{k,{n-p}}(\Omega,\xi).
\end{align*}

Here the last second equality uses Theorem \ref{the39}. For the convenience of readers, we give a simple explanation. The logarithmic family of Wulff shapes $[h_s]$ is defined as the Wulff shape of $h_s$, where $h_s$ is given by
\begin{align*}
\log h_s=\log h_\Omega+sf+o(s,\cdot).
\end{align*}
This and $\frac{1}{h_\Omega}=\rho_\Omega^*$, allow us to define
\begin{align*}
\log \rho_s^*=\log \rho_\Omega^*-sf-o(s,\cdot),
\end{align*}
and $\rho_s^*$ will generate a logarithmic family of convex hull $\langle\Omega^*,-f,-o,s\rangle$. From \cite[Lemma 2.8]{HY}, we know that $\langle\rho_s\rangle^*=[h_s]$ and $\langle\rho_0\rangle^*=[h_0]$, then
\begin{align*}
[\Omega,f,o,s]=\langle\Omega^*,-f,-o,s\rangle^*.
\end{align*}
Thus the desired result follows directly from Theorem \ref{the39}.
\end{proof}
\begin{corollary}\label{cor311}
Let $\Omega_1, \Omega_2\in\mathcal{K}_o^n$, $p\neq n$ and $1\leq k\leq n-1$. Then
\begin{align*}
&\lim_{s\rightarrow 0}\frac{\widetilde{Q}_{k,{n-p}}((1-s)\Omega_1+s\Omega_2)-\widetilde{Q}_{k,{n-p}}(\Omega_1)}{s}\\
=&(p+1-k)(k+2)[\widetilde{Q}(\Omega_1,\Omega_2)-\widetilde{Q}_{k,{n-p}}(\Omega_1)],
\end{align*}
and
\begin{align*}
&\lim_{s\rightarrow 0}\frac{\widetilde{Q}_{k,{n-p}}((1-s)\Omega_1+_0s\Omega_2)-\widetilde{Q}_{k,{n-p}}(\Omega_1)}{s}\\
=&(p+1-k)(k+2)\int_{S^{n-1}}\log\bigg(\frac{h_{\Omega_2}(\xi)}{h_{\Omega_1}(\xi)}\bigg)
d\widetilde{Q}_{k,{n-p}}(\Omega_1,\xi).
\end{align*}
Here $\widetilde{Q}(\Omega_1,\Omega_2)=\int_{S^{n-1}}\frac{h_{\Omega_2}(v)}{h_{\Omega_1}(v)}d\widetilde{Q}_{k,{n-p}}(\Omega_1,\xi)$.
\end{corollary}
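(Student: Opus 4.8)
The plan is to realize each of the two one-parameter families $(1-s)\Omega_1+s\Omega_2$ and $(1-s)\Omega_1+_0 s\Omega_2$ as a logarithmic family of Wulff shapes with respect to $(h_{\Omega_1},f)$ for a suitable continuous $f$ on $S^{n-1}$, and then apply Theorem \ref{the310} directly, simplifying the resulting integral.

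First I would treat the Minkowski combination. Its support function is $h_s=(1-s)h_{\Omega_1}+sh_{\Omega_2}=h_{\Omega_1}\bigl(1+s(\frac{h_{\Omega_2}}{h_{\Omega_1}}-1)\bigr)$, so taking logarithms gives $\log h_s=\log h_{\Omega_1}+s\bigl(\frac{h_{\Omega_2}}{h_{\Omega_1}}-1\bigr)+o(s,\cdot)$, where the remainder is $o(s)$ uniformly on $S^{n-1}$ because $h_{\Omega_1},h_{\Omega_2}$ are bounded away from $0$ and $\infty$ (both bodies lie in $\mathcal{K}_o^n$). Hence $[h_s]$ is a logarithmic family formed by $(h_{\Omega_1},f)$ with $f=\frac{h_{\Omega_2}}{h_{\Omega_1}}-1$; since the Wulff shape of a support function is the body itself, $[h_s]=(1-s)\Omega_1+s\Omega_2$. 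Theorem \ref{the310} then gives the derivative as $(p+1-k)(k+2)\int_{S^{n-1}}\bigl(\frac{h_{\Omega_2}}{h_{\Omega_1}}-1\bigr)\,d\widetilde{Q}_{k,n-p}(\Omega_1,\cdot)$. Splitting the integral and using $\int_{S^{n-1}}d\widetilde{Q}_{k,n-p}(\Omega_1,\cdot)=\widetilde{Q}_{k,n-p}(\Omega_1)$ (take $g\equiv 1$ in Lemma \ref{lem35}, which recovers \eqref{eq304}) together with the definition $\widetilde{Q}(\Omega_1,\Omega_2)=\int_{S^{n-1}}\frac{h_{\Omega_2}}{h_{\Omega_1}}\,d\widetilde{Q}_{k,n-p}(\Omega_1,\cdot)$ yields the first formula.

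For the logarithmic combination, the defining support function is $h_s=h_{\Omega_1}^{1-s}h_{\Omega_2}^{s}$, so $\log h_s=\log h_{\Omega_1}+s\log\frac{h_{\Omega_2}}{h_{\Omega_1}}$ exactly, with no remainder term; thus $[h_s]$ is the logarithmic family formed by $(h_{\Omega_1},f)$ with $f=\log\frac{h_{\Omega_2}}{h_{\Omega_1}}$, and $[h_s]=(1-s)\Omega_1+_0 s\Omega_2$ by definition of the logarithmic Minkowski combination. Applying Theorem \ref{the310} immediately delivers the second formula.

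The computations are essentially routine, so no genuine obstacle is expected; the only points needing a little care are verifying that the remainder in the first expansion is genuinely $o(s)$ uniformly on $S^{n-1}$ (which uses the positivity and boundedness of $h_{\Omega_1}$, hence $\Omega_1\in\mathcal{K}_o^n$) and noting that $f=\frac{h_{\Omega_2}}{h_{\Omega_1}}-1$ and $f=\log\frac{h_{\Omega_2}}{h_{\Omega_1}}$ are continuous on $S^{n-1}$ so that the hypotheses of Theorem \ref{the310} are met.
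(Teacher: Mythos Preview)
Your proposal is correct and follows essentially the same approach as the paper: in both cases you expand $\log h_s$ about $s=0$ to identify the family as a logarithmic family of Wulff shapes with respect to $(h_{\Omega_1},f)$, then apply Theorem~\ref{the310} and split the resulting integral. The paper's proof is virtually identical, writing $f=\frac{h_{\Omega_2}-h_{\Omega_1}}{h_{\Omega_1}}$ (which is your $\frac{h_{\Omega_2}}{h_{\Omega_1}}-1$) and $f=\log\frac{h_{\Omega_2}}{h_{\Omega_1}}$ for the two cases.
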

\begin{proof}
For sufficiently small $s$, we define $h_s$ by
\begin{align*}
h_s=(1-s)h_{\Omega_1}+sh_{\Omega_2}=h_{\Omega_1}+s(h_{\Omega_2}-h_{\Omega_1}),
\end{align*}
taking the logarithm of both sides of the above equality, we obtain the following form
\begin{align*}
\log h_s=\log h_{\Omega_1}+s\bigg(\frac{h_{\Omega_2}-h_{\Omega_1}}{h_{\Omega_1}}\bigg)+o(s,\cdot).
\end{align*}
From Theorem \ref{the310}, we get
\begin{align*}
&\lim_{s\rightarrow 0}\frac{\widetilde{Q}_{k,{n-p}}((1-s)\Omega_1+s\Omega_2)-\widetilde{Q}_{k,{n-p}}(\Omega_1)}{s}\\
=&(p+1-k)(k+2)\int_{S^{n-1}}\frac{h_{\Omega_2}-h_{\Omega_1}}{h_{\Omega_1}}d\widetilde{Q}_{k,{n-p}}(\Omega_1,\xi)\\
=&(p+1-k)(k+2)\int_{S^{n-1}}\frac{h_{\Omega_2}}{h_{\Omega_1}}d\widetilde{Q}_{k,{n-p}}(\Omega_1,\xi)\!-\!(p+1-k)(k+2)\int_{S^{n-1}}\!d\widetilde{Q}_{k,{n-p}}(\Omega_1,\xi)\\
=&(p+1-k)(k+2)[\widetilde{Q}(\Omega_1,\Omega_2)-\widetilde{Q}_{k,{n-p}}(\Omega_1)].
\end{align*}
Similarly, for sufficiently small $s$, we can also denote $h_s$ by
\begin{align*}
h_s=h_{\Omega_1}^{1-s}h_{\Omega_2}^s=h_{\Omega_1}\bigg(\frac{h_{\Omega_2}}{h_{\Omega_1}}\bigg)^s,
\end{align*}
then
\begin{align*}
\log h_s=\log h_{\Omega_1}+s\log\bigg(\frac{h_{\Omega_2}}{h_{\Omega_1}}\bigg).
\end{align*}
Thus we have following result by Theorem \ref{the310},
\begin{align*}
\lim_{s\rightarrow 0}\frac{\widetilde{Q}_{k,{n-p}}((1-s)\Omega_1+_0s\Omega_2)\!-\!\widetilde{Q}_{k,{n-p}}(\Omega_1)}{s}=&(p\!+\!1\!-\!k)(k\!+\!2)\!\int_{S^{n-1}}\!\log \frac{h_{\Omega_2}}{h_{\Omega_1}}d\widetilde{Q}_{k,n\!-\!p}(\Omega_1,\xi).
\end{align*}
\end{proof}
\begin{corollary}\label{cor312}
Let $\Omega_1, \Omega_2, \Omega_3\in\mathcal{K}_o^n$, $p\neq n$ and $1\leq k\leq n-1$. Then
\begin{align*}
&\lim_{s\rightarrow 0}\frac{\widetilde{Q}_{k,n-p}((1-s)\Omega_1+_0s\Omega_2,\Omega_3)-\widetilde{Q}_{k,n-p}(\Omega_1,\Omega_3)}{s}\\
=&(p+1-k)(k+2)\int_{S^{n-1}}\log \frac{h_{\Omega_2}}{h_{\Omega_1}}d\widetilde{Q}_{k,n-p}(\Omega_1,\Omega_3,\xi),
\end{align*}
where
\begin{align*}
\widetilde{Q}_{k,n-p}(\Omega_1,\Omega_3,\eta)=\frac{1}{n-p}\int_{\alpha^*_{\Omega_1}(\eta)}\rho_{\Omega_1}(\xi)^{n-p}\rho_{\Omega_3}(\xi)^{p+1-k}|D u(r_{\Omega_1}(\xi))|^{k+1}d\xi.
\end{align*}
\end{corollary}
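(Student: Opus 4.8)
The plan is to mimic the proof of Corollary~\ref{cor311}, carrying the fixed body $\Omega_3$ along as a frozen weight. Put $f=\log(h_{\Omega_2}/h_{\Omega_1})$ and $h_s=h_{\Omega_1}^{1-s}h_{\Omega_2}^{s}$, so that $\log h_s=\log h_{\Omega_1}+sf$; then $[h_s]$ is the logarithmic family of Wulff shapes determined by $(h_{\Omega_1},f)$ and $(1-s)\Omega_1+_0s\Omega_2=[h_s]$. Arguing exactly as in Lemma~\ref{lem35}, one has $\int_{S^{n-1}}g\,d\widetilde{Q}_{k,n-p}(\Omega_1,\Omega_3,\cdot)=\frac{1}{n-p}\int_{S^{n-1}}g(\alpha_{\Omega_1}(v))\,\rho_{\Omega_3}(v)^{n-p}\rho_{\Omega_1}(v)^{p+1-k}|Du(r_{\Omega_1}(v))|^{k+1}\,dv$, so it suffices to establish the mixed variational formula
\[
\frac{d}{ds}\widetilde{Q}_{k,n-p}([h_s],\Omega_3)\Big|_{s=0}=(p+1-k)(k+2)\int_{S^{n-1}}f(\xi)\,d\widetilde{Q}_{k,n-p}(\Omega_1,\Omega_3,\xi)
\]
and then specialise $f$ as above. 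One also needs the mixed analogue of Lemma~\ref{lem38} (weak continuity of $\widetilde{Q}_{k,n-p}(\cdot,\Omega_3,\cdot)$ together with the uniform bounds on $\rho_{\Omega_3}$, $\rho_{\Omega_1}$, $|Du|$ that legitimise differentiation under the integral sign), proved verbatim as in Section~\ref{sec3} since $\rho_{\Omega_3}$ is continuous and bounded above and below on $S^{n-1}$.

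For the mixed formula, following the scheme of Theorems~\ref{the39}--\ref{the310}, write
\[
\widetilde{Q}_{k,n-p}([h_s],\Omega_3)=\frac{1}{n-p}\int_{S^{n-1}}\rho_{\Omega_3}(v)^{n-p}\,\rho_{[h_s]}(v)^{p+1-k}\,|Du_s(r_{[h_s]}(v))|^{k+1}\,dv,
\]
where $u_s$ is the admissible solution of \eqref{eq101} on $[h_s]$. Differentiating at $s=0$ under the integral sign (legitimate as in the proof of Lemma~\ref{lem38}, via Lemma~\ref{lem21} and the gradient bounds of \cite[Theorem 3.1]{WXJ1}), the frozen factor $\rho_{\Omega_3}^{n-p}$ passes through, and the remainder is exactly the computation of Theorem~\ref{the39}: Lemma~\ref{lem21} gives $\frac{d}{ds}\rho_{[h_s]}^{p+1-k}|_{s=0}=(p+1-k)f(\alpha_{\Omega_1})\rho_{\Omega_1}^{p+1-k}$, and, the domain perturbation being prescribed by $f$ just as there, the chain-rule computation of Theorem~\ref{the39} gives $\frac{d}{ds}|Du_s(r_{[h_s]})|^{k+1}|_{s=0}=\mathcal{L}\big((p+1-k)f(\alpha_{\Omega_1})\rho_{\Omega_1}^{p+1-k}\big)$, with $\mathcal{L}=\mathcal{L}_1+\mathcal{L}_2$ the self-adjoint operator from the proof of Theorem~\ref{the39} (unchanged, being built only from $u$ on $\Omega_1$). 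Hence $\frac{d}{ds}\widetilde{Q}_{k,n-p}([h_s],\Omega_3)|_{s=0}$ is the sum of a \emph{direct term} $(p+1-k)\int_{S^{n-1}}f\,d\widetilde{Q}_{k,n-p}(\Omega_1,\Omega_3,\cdot)$ and an \emph{$\mathcal{L}$-term} $\frac{1}{n-p}\int_{S^{n-1}}\rho_{\Omega_3}^{n-p}\rho_{\Omega_1}^{p+1-k}\,\mathcal{L}\big((p+1-k)f(\alpha_{\Omega_1})\rho_{\Omega_1}^{p+1-k}\big)\,dv$.

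\textbf{The main obstacle is the $\mathcal{L}$-term.} In Theorem~\ref{the39} it collapsed because the factor multiplying $\mathcal{L}(\cdot)$ was precisely $\rho^{p+1-k}$, on which the $(k+1)$-homogeneity of $l(u)=|Du|^{k+1}$ gives the clean identity $\mathcal{L}(\rho^{p+1-k})=(k+1)|Du|^{k+1}$ and self-adjointness does the rest; here the multiplying factor carries the extra frozen factor $\rho_{\Omega_3}^{n-p}$, so one must instead verify $\mathcal{L}(\rho_{\Omega_3}^{n-p}\rho_{\Omega_1}^{p+1-k})=(k+1)\rho_{\Omega_3}^{n-p}|Du|^{k+1}$, i.e. re-run the derivation behind Theorem~\ref{the39}'s collapse with the fixed weight $\rho_{\Omega_3}^{n-p}$ inserted throughout. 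The local part $\mathcal{L}_1$ (which involves only $D^2u(r_{\Omega_1})$ and the radial displacement at the same point) acts as multiplication by a fixed function of $v$, hence commutes past $\rho_{\Omega_3}^{n-p}$, and its contribution is the same as in Theorem~\ref{the39}; the nonlocal part $\mathcal{L}_2$, built from the solution $\dot u$ of the linearised $k$-Hessian equation $S_k^{ij}(D^2u)\,\partial_{ij}\dot u=0$ on $\Omega_1$ with Dirichlet data proportional to $f$, I would treat through a weighted form of the Green identity for the divergence-form operator $\partial_i\!\left(S_k^{ij}\partial_j\,\cdot\,\right)$ --- the source of $\mathcal{L}_2$'s self-adjointness --- retaining $\rho_{\Omega_3}^{n-p}$ throughout and using that $\dot u$ is governed by the perturbation $f$ of $\Omega_1$ alone and is unaffected by $\Omega_3$. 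Granting this, the $\mathcal{L}$-term equals $(p+1-k)(k+1)\int_{S^{n-1}}f\,d\widetilde{Q}_{k,n-p}(\Omega_1,\Omega_3,\cdot)$; added to the direct term it produces the factor $(k+2)$, and specialising $f=\log(h_{\Omega_2}/h_{\Omega_1})$ gives the stated identity. I expect the handling of this weighted nonlocal term --- ensuring the contribution of $\mathcal{L}_2$ against the non-constant frozen factor $\rho_{\Omega_3}^{n-p}$ still collapses as above --- to be the only substantive point beyond the proofs of Theorems~\ref{the39}--\ref{the310}.
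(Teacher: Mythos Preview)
Your approach is considerably more detailed than the paper's; the paper dispatches Corollary~\ref{cor312} in a single sentence (``directly obtained from replacing $p=n-p$ in formula~(\ref{eq303}), Definition~\ref{def34} and Corollary~\ref{cor311}''), treating it as a formal restatement of the one–body case with the mixed definition~(\ref{eq303}) inserted. No computation is carried out, and in particular the weighted $\mathcal{L}_2$ issue you isolate is not addressed at all. So at the level of strategy you and the paper agree; the difference is that you try to actually execute the argument and find a genuine obstruction the paper does not mention.

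Two remarks. First, your integral representation $\int g\,d\widetilde{Q}_{k,n-p}(\Omega_1,\Omega_3,\cdot)=\frac{1}{n-p}\int g(\alpha_{\Omega_1})\rho_{\Omega_3}^{n-p}\rho_{\Omega_1}^{p+1-k}|Du(r_{\Omega_1})|^{k+1}\,dv$ has the exponents on $\rho_{\Omega_1}$ and $\rho_{\Omega_3}$ swapped relative to the measure as written in the statement. Your version is the one consistent with~(\ref{eq303}) after $p\mapsto n-p$ (the second argument carries $\rho^{n-p}$, the first carries $\rho^{p+1-k}$ and the gradient term), so this looks like a misprint in the paper rather than in your work; but it is worth flagging.

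Second, the gap you identify is real and your proposed fix is not quite enough. Self-adjointness of $\mathcal{L}$ lets you transfer the operator onto the factor $\rho_{\Omega_3}^{n-p}\rho_{\Omega_1}^{p+1-k}$, but the identity $\mathcal{L}(\rho_{\Omega_1}^{p+1-k})=(k+1)|Du|^{k+1}$ used in Theorem~\ref{the39} came from the $(k+1)$-homogeneity of $|Du|^{k+1}$ under the \emph{uniform} dilation $\rho_s=e^s\rho_0$. Replacing that input by $\rho_{\Omega_3}^{n-p}\rho_{\Omega_1}^{p+1-k}$ corresponds to a \emph{non-uniform} radial perturbation $\partial_s\log\rho_s\propto\rho_{\Omega_3}^{n-p}$, for which homogeneity says nothing; so the claim $\mathcal{L}(\rho_{\Omega_3}^{n-p}\rho_{\Omega_1}^{p+1-k})=(k+1)\rho_{\Omega_3}^{n-p}|Du|^{k+1}$ does not follow from the argument you sketch, and your ``weighted Green identity'' idea would still need a separate evaluation of $\mathcal{L}_2$ on this input. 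The paper's one-line proof does not resolve this either; it simply asserts the analogy.
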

\begin{proof} The result is directly obtained from replacing $p=n-p$ in formula (\ref{eq303}), Definition \ref{def34} and Corollary \ref{cor311}.
\end{proof}

\section{\bf Geometric flow and associated functionals}\label{sec4}
In this subsection, we give the scalar form of geometric flow (\ref{eq107}) and discuss the key geometric features of associated functionals along the flow (\ref{eq107}) to solve the $p$-th dual Minkowski problem for the $k$-torsional rigidity with $1\leq k\leq n-1$.

Taking the scalar product of both sides of the equation and of the initial condition in the flow (\ref{eq107}) by $v$, by means of the definition of support function (\ref{eq201}), we describe flow (\ref{eq107}) with the support function as the following quantity equation.
\begin{align}\label{eq401}
\left\{
\begin{array}{lc}
\frac{\partial h(x,t)}{\partial t}=\frac{h^2}{f(x)}(|\nabla h|^2+h^2)^{\frac{p-n}{2}}|D u(\overline{\nabla}h,t)|^{k+1}\sigma_{n-k}(x,t)
-\eta(t)h(x,t),\\
h(x,0)=h_0(x).\\
\end{array}
\right.
\end{align}
From $\rho^2=|\nabla h|^2+h^2$, we can write (\ref{eq401}) as
\begin{align}\label{eq402}
\left\{
\begin{array}{lc}
\frac{\partial h(x,t)}{\partial t}=\frac{h^2}{f(x)}\rho^{p-n}|D u(\overline{\nabla}h,t)|^{k+1}\sigma_{n-k}(x,t)
-\eta(t)h(x,t),\\
h(x,0)=h_0(x).\\
\end{array}
\right.
\end{align}
Notice that
\begin{align}\label{eq403}
\frac{1}{\rho(v,t)}\frac{\partial\rho(v,t)}{\partial t}=\frac{1}{h(x,t)}\frac{\partial h(x,t)}{\partial t}.
\end{align}
Thus
\begin{align}\label{eq404}\left\{
\begin{array}{lc}
\frac{\partial \rho(v,t)}{\partial t}=\frac{h}{f(x)}\rho^{p+1-n}|D u(\overline{\nabla}h,t)|^{k+1}\sigma_{n-k}(x,t)
-\eta(t)\rho(v,t),\\
\rho(v,0)=\rho_0(v).\\
\end{array}
\right.\end{align}

Firstly, we show that the functional $\widetilde{Q}_{k,{n-p}}(\Omega_t)$ with $p\neq n$ defined as (\ref{eq304}) is non-decreasing along the flow (\ref{eq107}).
\begin{lemma}\label{lem41}
The functional $\widetilde{Q}_{k,{n-p}}(\Omega_t)$ with $p\neq n$ is non-decreasing along the flow (\ref{eq107}). Namely, $\frac{\partial}{\partial t}\widetilde{Q}_{k,{n-p}}(\Omega_t)\geq0$, the equality holds if and only if the support function of $\Omega_t$ satisfies (\ref{eq106}).
\end{lemma}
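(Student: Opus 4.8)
The plan is to differentiate $\widetilde{Q}_{k,n-p}(\Omega_t)$ directly using its radial-function representation \eqref{eq304} and the evolution equation \eqref{eq404}, and then recognize the resulting integrand as (up to a positive multiple) the variational quantity supplied by Theorem \ref{the310}, with the role of the ``test function'' $f$ played by the logarithmic time-derivative of $h(\cdot,t)$. First I would write
\begin{align*}
\frac{\partial}{\partial t}\widetilde{Q}_{k,n-p}(\Omega_t)
=\frac{1}{n-p}\int_{S^{n-1}}\frac{\partial}{\partial t}\Big(\rho_{\Omega_t}(v)^{p+1-k}|Du(r_{\Omega_t}(v),t)|^{k+1}\Big)\,dv,
\end{align*}
split the $t$-derivative into the $\rho^{p+1-k}$ factor and the $|Du|^{k+1}$ factor exactly as in the proof of Theorem \ref{the39}, and use the self-adjointness of the operator $\mathcal L$ (together with $\mathcal L(\rho^{p+1-k})=(k+1)|Du|^{k+1}$) to collapse the $|Du|^{k+1}$-derivative term into another copy of the $\rho^{p+1-k}$-derivative term. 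The upshot, following the same bookkeeping as in Theorem \ref{the310} with $s$ replaced by $t$ and $f$ replaced by $\partial_t \log h = \frac1h \partial_t h$, should be
\begin{align*}
\frac{\partial}{\partial t}\widetilde{Q}_{k,n-p}(\Omega_t)
=(p+1-k)(k+2)\int_{S^{n-1}}\frac{1}{h(x,t)}\frac{\partial h(x,t)}{\partial t}\,d\widetilde{Q}_{k,n-p}(\Omega_t,x).
\end{align*}

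Next I would substitute the flow equation \eqref{eq402} for $\partial_t h$ and unwind $d\widetilde{Q}_{k,n-p}(\Omega_t,\cdot)$ via \eqref{eq103} and the Corfton formula into an integral over $S^{n-1}$ against $dx$. Using $dS_{n-k}=\sigma_{n-k}(h_{ij}+h\delta_{ij})\,dx$, $\rho^2=|\nabla h|^2+h^2$, and $|Du(\overline\nabla h,t)|=|Du(\nu_{\Omega_t}^{-1}(x),t)|$, the integrand becomes, after cancellation, a constant multiple of
\begin{align*}
\left(\frac{h}{f(x)}\rho^{p-n}|Du|^{k+1}\sigma_{n-k}-\eta(t)h\right)
\cdot\frac{1}{h}\cdot\rho^{p-n}h\,|Du|^{k+1}\sigma_{n-k},
\end{align*}
which, after writing $G:=\rho^{p-n}h\,|Du|^{k+1}\sigma_{n-k}$, is a multiple of $\big(G/f - \eta(t)h\big)G$. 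I would then invoke the definition \eqref{eq108} of $\eta(t)$, which after the same change of variables reads $\eta(t)=\big(\int_{S^{n-1}} G\,dx\big)\big/\big(\int_{S^{n-1}} f\,dx\big)$, i.e. $\eta(t)\int f = \int G$. Thus
\begin{align*}
\frac{\partial}{\partial t}\widetilde{Q}_{k,n-p}(\Omega_t)
= c\left(\int_{S^{n-1}}\frac{G^2}{f}\,dx - \eta(t)\int_{S^{n-1}}G\,dx\right)
= c\left(\int_{S^{n-1}}\frac{G^2}{f}\,dx - \frac{\big(\int_{S^{n-1}}G\,dx\big)^2}{\int_{S^{n-1}}f\,dx}\right),
\end{align*}
where $c=(p+1-k)(k+2)/(n-p)>0$ needs to be checked to be positive for the relevant range of $p$ and $k$ (this is where the hypothesis $p<n-2$, hence $p+1-k<n-1-k\le n-2<n$ and $n-p>0$, gets used; one should confirm the sign of $(p+1-k)$ is handled or absorbed correctly — see the remark below). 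By the Cauchy–Schwarz inequality applied to $G/\sqrt f$ and $\sqrt f$, the bracket is $\ge 0$, with equality iff $G/f$ is constant a.e., i.e. iff $G=\tau^{-1}f$ for some constant, which is precisely equation \eqref{eq106}.

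The main obstacle I anticipate is the first step: justifying the passage of $\partial_t$ inside the integral and, more substantively, correctly computing $\partial_t|Du(r_{\Omega_t}(v),t)|^{k+1}$ when $u$ itself depends on $t$ through the domain $\Omega_t$ — this is exactly the subtlety that makes Theorem \ref{the39} ``even more difficult than \cite[Theorem 4.4]{HY}.'' One must reuse the decomposition $\partial_t|Du|^{k+1}=\mathcal L_1(\cdot)+\mathcal L_2(\cdot)$ with the material-derivative term $\dot u$ handled via the self-adjointness of $\mathcal L_2$ established on \cite[page 69]{HY0}, and be careful that the ``speed'' driving the domain variation here is $\partial_t\log\rho = \partial_t\log h$ rather than an abstract $g$; matching signs and the factor $(k+2)$ against Theorem \ref{the310} is the delicate part. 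A secondary point worth a sentence is the sign of the constant $(p+1-k)(k+2)/(n-p)$: since the lemma is stated for $p\ne n$ but Theorem \ref{thm13} restricts to $p<n-2$, I would state the monotonicity under whatever sign hypothesis makes $c>0$ and note that the equality case is unaffected by the sign. Everything else is routine change-of-variables bookkeeping between the radial ($dv$) and Gauss ($dx$) parametrizations already used repeatedly in Section \ref{sec3}.
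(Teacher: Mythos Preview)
Your approach is essentially identical to the paper's: invoke the variational formula of Theorem~\ref{the310} (the paper cites it directly rather than rederiving the $\mathcal L$-argument), substitute the flow equation \eqref{eq402}/\eqref{eq404}, and apply Cauchy--Schwarz/H\"older to the resulting expression $\int G^2/f\,dx - (\int G\,dx)^2/\int f\,dx$ to conclude nonnegativity with equality exactly at \eqref{eq106}. Your flagged concern about the sign of $(p+1-k)(k+2)/(n-p)$ is legitimate---the paper's proof applies H\"older in the same direction and likewise does not address this sign, so as written the argument only yields $\partial_t\widetilde{Q}_{k,n-p}(\Omega_t)\ge 0$ when that constant is nonnegative.
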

\begin{proof}
From Theorem \ref{the310}, we know that
\begin{align*}
\frac{d}{dt}\widetilde{Q}_{k,{n-p}}(\Omega_t)=\frac{(p+1-k)(k+2)}{n-p}\int_{S^{n-1}}\rho^{p-k}\frac{\partial \rho(v,t)}{\partial t}|Du|^{k+1}dv.
\end{align*}
Thus from (\ref{eq404}), (\ref{eq108}), $\rho^{n+1-k}dv=h\sigma_{n-k}dx$ and the H\"{o}lder inequality, we obtain
\begin{align*}
&\frac{d}{dt}\widetilde{Q}_{k,{n-p}}(\Omega_t)\\
=&\frac{(p\!+\!1\!-\!k)\!(k\!+
\!2)}{n-p}\!\int_{S^{n-1}}\!\rho^{p-k}\!\bigg(\!\frac{h}{f(x)}\!\rho^{p-n+1}|Du(\overline{\nabla}h,t)|^{k+1}\sigma_{n-k}(x,t)
\!-\eta(t)\rho(v,t)\!\bigg)\!|Du|^{k+1}dv\\
=&\frac{(p+1-k)(k+2)}{n-p}\bigg[\int_{S^{n-1}}\rho^{2p+1-n-k}\frac{h}{f(x)}|Du|^{2(k+1)}\sigma_{n-k}dv\\
&-\frac{\int_{S^{n-1}}\rho(v,t)^{p+1-k}|Du|^{k+1}dv}{\int_{S^{n-1}}f(x)dx}\int_{S^{n-1}}\rho(v,t)^{p+1-k}|Du|^{k+1}dv\bigg]\\
=&\frac{(p+1-k)\!(k+2)}{(n-p)\int_{S^{n-1}}f(x)dx}\!\bigg\{\bigg[\bigg(\int_{S^{n-1}}\!f(x)dx\bigg)^{\frac{1}{2}}\!\bigg(\!\int_{S^{n-1}}\!\rho^{2p+1-n-k}\!\frac{h}{f(x)}
|Du|^{2(k+1)}\sigma_{n-k}dv\!\bigg)^{\frac{1}{2}}\bigg]^2\\
&-\bigg(\int_{S^{n-1}}\rho(v,t)^{p+1-k}|Du|^{k+1}dv\bigg)^2\bigg\}\\
=&\bigg\{\bigg[\bigg(\int_{S^{n-1}}f(x)\frac{\rho^{n+1-k}}{h\sigma_{n-k}}dv\bigg)^{\frac{1}{2}}\bigg(\int_{S^{n-1}}\rho^{2p+1-n-k}\frac{h}{f(x)}
|Du|^{2(k+1)}\sigma_{n-k}dv\bigg)^{\frac{1}{2}}\bigg]^2\\
&-\bigg(\int_{S^{n-1}}\rho(v,t)^{p+1-k}|Du|^{k+1}dv\bigg)^2\bigg\}\frac{(p+1-k)(k+2)}{(n-p)\int_{S^{n-1}}f(x)dx}\\
=&\bigg\{\!\bigg[\!\bigg(\int_{S^{n-1}}\!\bigg[\bigg(f(x)\frac{\rho^{n+1-k}}{h\sigma_{n-k}}\bigg)^{\frac{1}{2}}\bigg]^2\!dv\bigg)^{\frac{1}{2}}\!\bigg(\int_{S^{n-1}}
\!\bigg[\bigg(\rho^{2p+1-n-k}\!\frac{h}{f(x)}|Du|^{2(k+1)}\sigma_{n-k}\!\bigg)^{\frac{1}{2}}\bigg]^2\!dv\!\bigg)^{\frac{1}{2}}\!\bigg]^2\\
&-\bigg(\int_{S^{n-1}}\rho(v,t)^{p+1-k}|Du|^{k+1}dv\bigg)^2\bigg\}\frac{(p+1-k)(k+2)}{(n-p)\int_{S^{n-1}}f(x)dx}\\
\geq&\frac{(p+1-k)(k+2)}{(n-p)\int_{S^{n-1}}f(x)dx}\bigg\{\bigg[\int_{S^{n-1}}\bigg(f(x)\frac{\rho^{n+1-k}}{h\sigma_{n-k}}\bigg)^{\frac{1}{2}}\bigg(\rho^{2p+1-n-k}\frac{h}{f(x)}
|Du|^{2(k+1)}\sigma_{n-k}\bigg)^{\frac{1}{2}}dv\bigg]^2\\
&-\bigg(\int_{S^{n-1}}\rho(v,t)^{p+1-k}|Du|^{k+1}dv\bigg)^2\bigg\}\\
=&\frac{(p\!+\!1\!-\!k)\!(k\!+\!2)}{(n\!-\!p)\!\int_{S^{n-1}}\!f(x)dx}\!\bigg[\!\bigg(\int_{S^{n-1}}\!\rho(v,t)^{p+1-k}\!|Du|^{k+1}\!dv\!\bigg)^2-
\!\bigg(\int_{S^{n-1}}\!\rho(v,t)^{p+1-k}\!|Du|^{k+1}\!dv\!\bigg)^2\bigg]\\
=&0.
\end{align*}

According to the equality condition of H\"{o}lder inequality, we know that the above equality holds if and only if
\begin{align*}
\bigg(f(x)\frac{\rho^{n+1-k}}{h\sigma_{n-k}}\bigg)^{\frac{1}{2}}=\tau\bigg(\rho^{2p+1-n-k}\frac{h}{f(x)}
|Du|^{2(k+1)}\sigma_{n-k}\bigg)^{\frac{1}{2}},
\end{align*}
the above equation can be simplified as
\begin{align*}
f(x)=\tau\rho^{p-n}h{f(x)}|Du|^{k+1}\sigma_{n-k},
\end{align*}
namely,
\begin{align*}
f(x)=\tau(h^2+|\nabla h|^2)^{\frac{p-n}{2}}h{f(x)}|Du|^{k+1}\sigma_{n-k}.
\end{align*}
This is equation (\ref{eq106}) with $\tau=\frac{1}{\eta(t)}$.
\end{proof}

Moreover, we prove the functional (\ref{eq109}) is unchanged along the flow (\ref{eq107}). Please refer to the following lemma for details.
\begin{lemma}\label{lem42} The functional (\ref{eq109}) is unchanged along the flow (\ref{eq107}). That is $\frac{d}{d t}\Phi(\Omega_t)=0$.
\end{lemma}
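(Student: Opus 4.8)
The plan is to differentiate $\Phi(\Omega_t)$ under the integral sign, substitute the scalar flow equation (\ref{eq402}), and then observe that the two resulting terms cancel exactly by virtue of the change-of-variables identity $\rho^{n+1-k}\,dv=h\sigma_{n-k}\,dx$ together with the definition (\ref{eq108}) of $\eta(t)$. Concretely, from (\ref{eq109}),
\[
\frac{d}{dt}\Phi(\Omega_t)=\int_{S^{n-1}}\frac{1}{h(x,t)}\,\frac{\partial h(x,t)}{\partial t}\,f(x)\,dx,
\]
which is legitimate once the a priori estimates of Section \ref{sec5} are in hand: they keep $h(\cdot,t)$ pinched between two positive constants uniformly on $S^{n-1}\times[0,T]$ (in particular $\log h$ is well defined), so the integrand and its $t$-derivative are bounded and differentiation under the integral sign is valid.

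Next I substitute (\ref{eq402}). Dividing that equation by $h$ gives
\[
\frac{1}{h}\,\frac{\partial h}{\partial t}=\frac{h}{f(x)}\,\rho^{p-n}\,|Du(\overline{\nabla}h,t)|^{k+1}\sigma_{n-k}(x,t)-\eta(t),
\]
so that the factor $f(x)$ cancels in the first term and one is left with
\[
\frac{d}{dt}\Phi(\Omega_t)=\int_{S^{n-1}}h\,\rho^{p-n}\,|Du|^{k+1}\sigma_{n-k}\,dx-\eta(t)\int_{S^{n-1}}f(x)\,dx.
\]

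Finally, applying $\rho^{n+1-k}\,dv=h\sigma_{n-k}\,dx$ to the first integral rewrites it as $\int_{S^{n-1}}\rho^{p+1-k}|Du|^{k+1}\,dv$, while (\ref{eq108}) yields $\eta(t)\int_{S^{n-1}}f(x)\,dx=\int_{S^{n-1}}\rho^{p+1-k}|Du|^{k+1}\,dv$; here one uses $\overline{\nabla}h=X(x,t)$ from (\ref{eq202}), so that $|Du(\overline{\nabla}h,t)|=|Du(X,t)|$. The two integrals coincide and cancel, giving $\frac{d}{dt}\Phi(\Omega_t)=0$. The only point needing genuine care is the justification of differentiating under the integral sign; everything else is the algebraic cancellation that the particular normalization $\eta(t)$ was designed to produce, which is precisely why $\Phi$ can serve as the conserved quantity underlying the uniform lower bound in the $C^0$ estimate.
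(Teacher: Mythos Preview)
Your proof is correct and follows essentially the same computation as the paper: differentiate under the integral, substitute the scalar flow (\ref{eq402}), use the Crofton-type identity $\rho^{n+1-k}\,dv=h\sigma_{n-k}\,dx$, and cancel against the definition (\ref{eq108}) of $\eta(t)$. One small caveat: you justify differentiation under the integral by invoking the a priori estimates of Section~\ref{sec5}, but Lemma~\ref{lem42} is itself used in the proof of Lemma~\ref{lem52}, so that reasoning would be circular; the correct (and sufficient) justification is simply the smoothness and positivity of $h(\cdot,t)$ on $S^{n-1}\times[0,T)$ coming from short-time existence of the parabolic flow.
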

\begin{proof}
By (\ref{eq109}), (\ref{eq108}), (\ref{eq402}) and $\rho^{n+1-k}dv=h\sigma_{n-k}dx$, we obtain the following result,
\begin{align*}
\frac{\partial}{\partial t}\Phi(\Omega_t)=&\int_{S^{n-1}}\frac{f(x)}{h}\frac{\partial h}{\partial t}dx\\
=&\int_{S^{n-1}}\frac{f(x)}{h}\bigg(\frac{h^2}{f(x)}\rho^{p-n}|D u|^{k+1}\sigma_{n-k}(x,t)
-\eta(t)h(x,t)\bigg)dx\\
=&\int_{S^{n-1}}\!h\rho^{p-n}|D u|^{k+1}\sigma_{n-k}(x,t)dx\!-\!\frac{\int_{S^{n-1}}\rho^{p+1-k}|Du|^{k+1}dv}{\int_{S^{n-1}}f(x)dx}\!\int_{S^{n-1}}f(x)dx\\
=&\int_{S^{n-1}}\rho^{p-n}|D u|^{k+1}\rho^{n+1-k}dv-\int_{S^{n-1}}\rho^{p+1-k}|Du|^{k+1}dv\\
=&0.
\end{align*}
\end{proof}

\section{\bf Priori estimates}\label{sec5}

In this subsection, we establish the $C^0$, $C^1$ and $C^2$ estimates for solutions to equation (\ref{eq402}). In the following of this paper, we always assume that $\partial\Omega_0$ be a smooth, closed and strictly convex hypersurface in $\mathbb{R}^n$ containing the origin in its interior. $h:S^{n-1}\times [0,T)\rightarrow \mathbb{R}$ is a smooth solution to equation (\ref{eq402}) with the initial $h(\cdot,0)$ the support function of $\partial\Omega_0$. Here $T$ is the maximal time for existence of the smooth solutions to equation (\ref{eq402}).

\subsection{$C^0$, $C^1$ estimates}

In order to complete the $C^0$ estimate, we firstly need to introduce the following lemma for convex bodies.
\begin{lemma}\label{lem51}\cite[Lemma 2.6]{CH}
Let $\Omega\in\mathcal{K}^n_o$, $h$ and $\rho$ are respectively the support function and the radial function of $\Omega$, and $x_{\max}$ and $\xi_{\min}$ are two points such that
$h(x_{\max})=\max_{S^{n-1}}h$ and $\rho(\xi_{\min})=\min_{S^{n-1}}\rho$. Then
\begin{align*}
\max_{S^{n-1}}h=&\max_{S^{n-1}}\rho \quad \text{and} \quad \min_{S^{n-1}}h=\min_{S^{n-1}}\rho;\end{align*}
\begin{align*}h(x)\geq& x\cdot x_{\max}h(x_{\max}),\quad \forall x\in S^{n-1};\end{align*}
\begin{align*}\rho(\xi)\xi\cdot\xi_{\min}\geq&\rho(\xi_{\min}),\quad \forall \xi\in S^{n-1}.\end{align*}
\end{lemma}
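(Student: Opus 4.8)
The plan is to obtain all three assertions from elementary relations between the support function and the radial function of a body $\Omega\in\mathcal{K}^n_o$, together with polarity duality. Throughout one uses that, since $\Omega$ is convex with the origin in its interior, both $h_\Omega$ and $\rho_\Omega$ are positive on $S^{n-1}$, that $\partial\Omega=\{\rho_\Omega(v)v:v\in S^{n-1}\}$, and that $\Omega$ is trapped between the concentric balls $B(o,r)\subseteq\Omega\subseteq B(o,R)$, where $r=\min_{S^{n-1}}\rho$ and $R=\max_{S^{n-1}}\rho$ (the inclusion $B(o,r)\subseteq\Omega$ holds because the segment from the interior point $o$ to any point of norm $<r$ cannot meet $\partial\Omega$; the inclusion $\Omega\subseteq B(o,R)$ holds because every $y\in\Omega$ lies between $o$ and the boundary point $\rho_\Omega(y/|y|)\,y/|y|$ on its ray).

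First I would prove $\max h=\max\rho$ and $\min h=\min\rho$. For the maxima: from $\Omega\subseteq B(o,R)$ one gets $h_\Omega(x)=\max_{y\in\Omega}\langle x,y\rangle\le R$ for every $x\in S^{n-1}$; conversely, if $y^{\ast}\in\partial\Omega$ realizes $|y^{\ast}|=R$ then $h_\Omega(y^{\ast}/|y^{\ast}|)\ge\langle y^{\ast}/|y^{\ast}|,y^{\ast}\rangle=R$, so equality holds. For the minima: $B(o,r)\subseteq\Omega$ gives $h_\Omega\ge h_{B(o,r)}\equiv r$, hence $\min h\ge r$; conversely, let $y_{\ast}\in\partial\Omega$ be a nearest boundary point (so $|y_{\ast}|=r$) and pick any supporting hyperplane of $\Omega$ at $y_{\ast}$ with outer unit normal $\nu$. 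Then $B(o,r)\subseteq\Omega$ lies in the halfspace $\{y:\langle y,\nu\rangle\le\langle y_{\ast},\nu\rangle\}$, so $r=\max_{y\in B(o,r)}\langle y,\nu\rangle\le\langle y_{\ast},\nu\rangle\le|y_{\ast}|=r$; thus equality in Cauchy--Schwarz forces $\nu=y_{\ast}/|y_{\ast}|$ and $h_\Omega(\nu)=r$, whence $\min h\le r$.

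Next, the inequality $h(x)\ge (x\cdot x_{\max})\,h(x_{\max})$. The key observation is that $h(x_{\max})\,x_{\max}\in\Omega$: choosing $\bar y\in\Omega$ with $\langle x_{\max},\bar y\rangle=h(x_{\max})=\max h=\max\rho=R$, the chain $R=\langle x_{\max},\bar y\rangle\le|\bar y|\le R$ forces equality in Cauchy--Schwarz, i.e.\ $\bar y=h(x_{\max})\,x_{\max}$. Since this point lies in $\Omega$, for every $x\in S^{n-1}$ we get $h(x)=\max_{y\in\Omega}\langle x,y\rangle\ge\langle x,h(x_{\max})x_{\max}\rangle=(x\cdot x_{\max})\,h(x_{\max})$. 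Finally, the estimate for $\rho$ at $\xi_{\min}$ is the polar counterpart: one applies the support-function inequality just proved to the polar body $\Omega^{\ast}\in\mathcal{K}^n_o$ and uses $h_{\Omega^{\ast}}=1/\rho_\Omega$, under which a maximum point of $h_{\Omega^{\ast}}$ is exactly a minimum point $\xi_{\min}$ of $\rho_\Omega$; clearing the (positive) denominators in the resulting inequality yields the desired relation between $\rho_\Omega(\xi)$ and $\rho_\Omega(\xi_{\min})$.

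I do not expect any serious obstacle. The only mildly delicate point is the internal-tangency step in the proof of $\min h=\min\rho$, where one must argue that the outward normal of $\Omega$ at a closest boundary point $y_{\ast}$ is forced to be $y_{\ast}/|y_{\ast}|$; but as indicated this reduces to pairing the halfspace description of a supporting hyperplane against the contained ball $B(o,r)$ and invoking equality in Cauchy--Schwarz. Everything else is bookkeeping with the definitions of $h_\Omega$, $\rho_\Omega$, and $\Omega^{\ast}$.
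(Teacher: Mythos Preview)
The paper does not give its own proof of this lemma; it is simply quoted from \cite[Lemma~2.6]{CH}. Your argument for the first two assertions is correct and is the standard one: the identities $\max h=\max\rho$ and $\min h=\min\rho$ follow from the sandwich $B(o,r)\subseteq\Omega\subseteq B(o,R)$ together with the supporting-hyperplane/Cauchy--Schwarz step at a nearest boundary point, and the estimate $h(x)\ge(x\cdot x_{\max})\,h(x_{\max})$ follows because the point $h(x_{\max})\,x_{\max}$ lies in $\Omega$.

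One point deserves a warning. If you actually carry your polarity argument through, from
\[
h_{\Omega^{\ast}}(\xi)\ \ge\ (\xi\cdot\xi_{\min})\,h_{\Omega^{\ast}}(\xi_{\min}),
\qquad h_{\Omega^{\ast}}=\frac{1}{\rho_{\Omega}},
\]
you obtain $\rho_{\Omega}(\xi)\,(\xi\cdot\xi_{\min})\le\rho_{\Omega}(\xi_{\min})$, i.e.\ the inequality with the \emph{opposite} sign to what is printed. This is not a defect in your reasoning: the direction ``$\le$'' is the correct one (for the unit ball the printed inequality would assert $\xi\cdot\xi_{\min}\ge1$ for every $\xi\in S^{n-1}$, which is false). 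The statement as reproduced in the paper carries a typographical sign error in the third display; your polarity argument proves the correct version, and when $\xi\cdot\xi_{\min}\le0$ the bound is trivial. In any case only the first two assertions are used later (in the proof of Lemma~\ref{lem52}), so the slip has no downstream effect.
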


\begin{lemma}\label{lem52}
Let $\Omega_t$ be a smooth strictly convex solution to the flow (\ref{eq107}) in $\mathbb{R}^n$ and $u(X,t)$ be the smooth admissible solution of (\ref{eq101}) in $\Omega_t$, and $f$ be a positive smooth function on $S^{n-1}$. Then there is a positive constant $C$ independent of $t$ such that
\begin{align}\label{eq501}
\frac{1}{C}\leq h(x,t)\leq C, \ \ \forall(x,t)\in S^{n-1}\times[0,T),
\end{align}
\begin{align}\label{eq502}
\frac{1}{C}\leq \rho(v,t)\leq C, \ \ \forall(v,t)\in S^{n-1}\times[0,T).
\end{align}
Here $h(x,t)$ and $\rho(v,t)$ are the support function and the radial function of $\Omega_t$, respectively.
\end{lemma}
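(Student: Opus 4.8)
The conclusion to establish is a two‑sided bound, uniform in $t\in[0,T)$, on $h(\cdot,t)$ and $\rho(\cdot,t)$. The plan is to reduce this to controlling $R(t):=\max_{S^{n-1}}h(\cdot,t)$ from above and $r(t):=\min_{S^{n-1}}h(\cdot,t)$ from below, and to drive both bounds with the two functionals already analyzed: the invariant $\Phi(\Omega_t)$ from Lemma~\ref{lem42} and the monotone quantity $\widetilde{Q}_{k,{n-p}}(\Omega_t)$ from Lemma~\ref{lem41}. By Lemma~\ref{lem51} one has $R(t)=\max_{S^{n-1}}\rho(\cdot,t)$ and $r(t)=\min_{S^{n-1}}\rho(\cdot,t)$, so (\ref{eq501}) and (\ref{eq502}) are equivalent and it suffices to treat $h$. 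I would first record the auxiliary input on the torsion function: comparing the admissible solution $u(\cdot,t)$ of (\ref{eq101}) in $\Omega_t$ with the explicit torsion functions of an inscribed ball and a circumscribed ball of $\Omega_t$ (legitimate barriers since $S_k^{1/k}$ is elliptic and concave on $\Gamma_k$) gives positive constants, depending only on the in‑ and circumradius of $\Omega_t$ — hence ultimately only on $r(t)$, $R(t)$ and the fact that $o\in\mathrm{int}\,\Omega_t$ — between which $|Du(\nu_{\Omega_t}^{-1}(x),t)|$ is squeezed. I would also use repeatedly the identity $\rho^{n+1-k}\,dv=h\,\sigma_{n-k}\,dx$ and the relation $\eta(t)\int_{S^{n-1}}f\,dx=(n-p)\widetilde{Q}_{k,{n-p}}(\Omega_t)$ that is immediate from (\ref{eq108}).

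Next I would check that positivity of $h$ is preserved, so that $\log h$ and hence $\Phi$ are well defined: the first term on the right of (\ref{eq402}) is nonnegative for a strictly convex body with $o\in\mathrm{int}\,\Omega_t$, so $\partial_t h\ge-\eta(t)h$ pointwise and Gr\"onwall gives $h(x,t)\ge h(x,0)\exp\!\big(-\int_0^t\eta(s)\,ds\big)>0$. With this in hand the uniform \emph{lower} bound on $R(t)$ is the point where $\Phi$ enters decisively: from $h(\cdot,t)\le R(t)$ and Lemma~\ref{lem42},
\begin{align*}
\Phi(\Omega_0)=\Phi(\Omega_t)=\int_{S^{n-1}}\log h(x,t)\,f(x)\,dx\le(\log R(t))\int_{S^{n-1}}f(x)\,dx,
\end{align*}
so $R(t)\ge\exp\!\big(\Phi(\Omega_0)/\!\int_{S^{n-1}}f\,dx\big)>0$ for every $t$.

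The harder half is the uniform \emph{upper} bound on $R(t)$, and I expect it to be the main obstacle. The mechanism I would use is a dichotomy forced by the rigidity of $\Phi$. By Lemma~\ref{lem51}, $h(x,t)\ge(x\cdot x_{\max})R(t)$ on the hemisphere about the maximum point $x_{\max}=x_{\max}(t)$, hence $h\ge R(t)/2$ on the cap $\{x:x\cdot x_{\max}\ge\tfrac12\}$ and therefore
\begin{align*}
\Phi(\Omega_0)=\Phi(\Omega_t)\ge\big(\log\tfrac{R(t)}{2}\big)\!\!\int_{\{x\cdot x_{\max}\ge 1/2\}}\!\!\!f\,dx+\!\!\int_{\{x\cdot x_{\max}<1/2\}}\!\!\!\log h(x,t)\,f(x)\,dx .
\end{align*}
If $R(t_i)\to\infty$ along some $t_i\to T$, constancy of the left side forces $\int_{\{x\cdot x_{\max}<1/2\}}\log h(\cdot,t_i)\,f\,dx\to-\infty$, which (as $f$ is bounded) forces $r(t_i)\to 0$, i.e. the origin runs into the boundary of a body whose diameter blows up. I would contradict this against the monotone functional: on the one hand $\widetilde{Q}_{k,{n-p}}(\Omega_{t_i})\ge\widetilde{Q}_{k,{n-p}}(\Omega_0)>0$ by Lemma~\ref{lem41}; on the other hand, inserting the barrier bounds on $|Du|$ and the pointwise control of $\rho$ from Lemma~\ref{lem51} into $\widetilde{Q}_{k,{n-p}}(\Omega_{t_i})=\tfrac1{n-p}\int_{S^{n-1}}\rho^{p+1-k}|Du|^{k+1}\,dv$ and splitting the integral over the cap and its complement, the hypothesis $p<n-2$ is precisely what makes the competing powers of $R(t_i)$ and $r(t_i)$ conspire so that $\widetilde{Q}_{k,{n-p}}(\Omega_{t_i})\to 0$, a contradiction. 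Once $R(t)\le C$ is known, the uniform lower bound on $r(t)$ I would obtain either from the same bookkeeping (now run with $r(t_i)\to 0$ alone), or, as a fallback, from a parabolic maximum‑principle argument at a spatial minimum point of $h(\cdot,t)$, where $\nabla h=0$, $(h_{ij})\ge 0$ and the lower barrier bound on $|Du|$ give $\frac{d}{dt}\min_{S^{n-1}}h\ge c\,(\min_{S^{n-1}}h)^{\,a}-\eta(t)\min_{S^{n-1}}h$ for a suitable exponent $a$, after $\eta(t)$ has been bounded above via the control of $R(t)$ and $\widetilde{Q}_{k,{n-p}}$ already obtained. The delicate point throughout — and the step I would spend the most care on — is this power counting: tracking how $|Du|$, $\rho$ and $\sigma_{n-k}$ degenerate when a convex body containing the origin becomes long and thin or presses the origin toward its boundary, and verifying that $p<n-2$ is exactly the threshold that closes every estimate.
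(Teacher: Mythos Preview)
Your reduction to bounding $R(t)=\max_{S^{n-1}}h$ and $r(t)=\min_{S^{n-1}}h$, and your identification of the invariant $\Phi(\Omega_t)=\int_{S^{n-1}}f\log h\,dx$ as the driver, are the same as in the paper. But the core mechanism you propose is different from the paper's and introduces an unnecessary obstacle. The paper obtains \emph{both} bounds from the invariance of $\Phi$ alone (Lemma~\ref{lem42}) together with the pointwise inequality $h(x,t)\ge (x\cdot x_{\max}^t)\,R(t)$ of Lemma~\ref{lem51}; it never appeals to the monotonicity of $\widetilde{Q}_{k,{n-p}}(\Omega_t)$, never needs the barrier estimates on $|Du|$ at this stage, and in particular never uses the hypothesis $p<n-2$ --- note that Lemma~\ref{lem52} is stated without any restriction on $p$. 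For the upper bound the paper inserts $h(x,t)\ge (x\cdot x_{\max}^t)R(t)$ into $\Phi(\Omega_t)=\Phi(\Omega_0)$ on the cap $\{x\cdot x_{\max}^t\ge \tfrac12\}$ and reads off $\log R(t)\le C$ directly. For the lower bound (the ``novel approach'' flagged in the abstract) it argues that if $\min_t h(\cdot,t)$ collapsed to $0$ on a set of positive spherical measure, then $\int f\log h\to-\infty$, contradicting $\Phi(\Omega_t)=\Phi(\Omega_0)$. No dichotomy via $\widetilde{Q}_{k,{n-p}}$ is needed.

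The step you leave open --- the ``power counting'' showing that $R(t_i)\to\infty$ and $r(t_i)\to 0$ force $\widetilde{Q}_{k,{n-p}}(\Omega_{t_i})=\tfrac{1}{n-p}\int_{S^{n-1}}\rho^{p+1-k}|Du|^{k+1}\,dv\to 0$ --- is a genuine gap, and I do not see how to close it. The barrier comparison gives only the crude global bounds $c\,r(t)\le |Du|\le C\,R(t)$, which say nothing about how $|Du(\nu_{\Omega_t}^{-1}(\cdot))|$ distributes over $S^{n-1}$ for a long thin body; meanwhile $\rho^{\,p+1-k}$ is \emph{large} (for $p+1-k>0$) on the large set of directions near the long axis, so there is no evident cancellation forcing the integral to vanish. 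The exponent $p<n-2$ does not emerge naturally from this bookkeeping, and your assertion that it is ``precisely the threshold'' is unsupported. Your fallback maximum--principle argument for $r(t)$ has the same defect: at a spatial minimum of $h$ one has $\nabla h=0$ and $h_{ij}\ge 0$, but the speed in (\ref{eq402}) carries the factor $\sigma_{n-k}(h_{ij}+h\delta_{ij})$, for which you have no lower bound at this stage, so the claimed differential inequality $\tfrac{d}{dt}\min h\ge c(\min h)^a-\eta(t)\min h$ is not available. The cleaner and shorter path is the one the paper takes: extract both bounds from $\Phi$ alone.
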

\begin{proof}
Due to  $\rho(v,t)v=\nabla h(x,t)+h(x,t)x$. Clearly, one sees
\begin{align*}
\min_{S^{n-1}} h(x,t)\leq \rho (v,t)\leq \max_{S^{n-1}} h(x,t).
\end{align*}
This implies that estimate (\ref{eq501}) is equivalent to estimate (\ref{eq502}). Thus we only need to estimate (\ref{eq501}) or (\ref{eq502}).

Here, we construct a concave function $\mathcal{F}(y)=-\frac{2}{n-1}e^{(n-1)y}-ye^{-(n-1)y}(n\geq 2), y\in\mathbb{R}$. Now, we introduce a new auxiliary function
\begin{align*}
\Phi_1(\Omega_t)=c+\mathcal{F}\bigg(\int_{S^{n-1}}\log h(x,t)f(x)dx\bigg),
\end{align*} 
where $c$ is a positive constant (independent of $t$) such that 
\begin{align*}
\Phi_1(\Omega_0)=c+\mathcal{F}\bigg(\int_{S^{n-1}}\log h(x,0)f(x)dx\bigg)=c_0>0.
\end{align*}
Then
\begin{align*}
\frac{\partial}{\partial t}\Phi_1(\Omega_t)
=\mathcal{F}^{\prime}\bigg(\int_{S^{n-1}}\log h(x,t)f(x)dx\bigg)\int_{S^{n-1}}\frac{f(x)}{h}\frac{\partial h}{\partial t}dx.
\end{align*}
From the proof of Lemma \ref{lem42}, we know that $\Phi_1(\Omega_t)$ is also unchanged along the flow (\ref{eq107}) with $t\in[0,T)$. We now denote $h_{\min_t}(x)$ (or $h_{\inf_t}(x)$) by the minimum value or infimum of $h(x,t)$ $w.r.t.$ time $t$. Here, for convenience and without loss of generality, we assume $\int_{S^{n-1}}f(x)dx=1$, since $\Phi_1(\Omega_t)$ is unchanged along the flow (\ref{eq107}) with $t\in[0,T)$ and $\mathcal{F}$ is a concave function, then by Jensen inequality, there is
\begin{align*}
\Phi_1(\Omega_0)=\Phi_1(\Omega_t)=&c+\mathcal{F}\bigg(\int_{S^{n-1}}\log h(x,t)f(x)dx\bigg)=c+\mathcal{F}\bigg(\int_{S^{n-1}}\log h_{\min_t}(x)f(x)dx\bigg)\\
\geq &c+\int_{S^{n-1}}\mathcal{F}[\log h_{\min_t}(x)]f(x)dx.
\end{align*}
Then 
\begin{align*}
\Phi_1(\Omega_0)^{-1}\leq&\frac{1}{c+\int_{S^{n-1}}\mathcal{F}[\log h_{\min_t}(x)]dx}.
\end{align*}
The foregoing inequality shows that $\log h_{\min_t}(x)\nrightarrow -\infty$, i,e. $h_{\min_t}(x)\nrightarrow 0$ (or $ h_{\inf_t}(x)\nrightarrow 0$) for any $x\in S^{n-1}$. In fact, if there is a point $x_0\in S^{n-1}$ such that $h_{\min_t}(x)\rightarrow 0$ when $x\rightarrow x_0$ and denote the neighborhood of $x_0$ being with $U(x_0,\epsilon)\subset S^{n-1}$ and the measure $|U(x_0,\epsilon)|>0$ for any small $\varepsilon>0$. Thus we have
\begin{align*}
&\Phi_1(\Omega_0)^{-1}\\
\leq&\frac{1}{c+\min\limits_{S^{n-1}}f(x)\int_{U(x_0,\epsilon)}\mathcal{F}[\log h_{\min_t}(x)]dx+\int_{S^{n-1}\setminus U(x_0,\epsilon)}\mathcal{F}[\log h_{\min_t}(x)]f(x)dx}\\
=&\frac{1}{\!c+\!\min\limits_{S^{n-1}}\!f(x)\!\int_{U(x_0,\epsilon)}\bigg(\frac{\!-\!2h_{\min_t}(x)^{n-1}}{n-1}\!-\frac{\!\log \!h_{\min_t}(x)\!}{h_{\min_t}(x)^{(n-1)}}\bigg)dx+\int_{S^{n-1}\setminus U(x_0,\epsilon)}\mathcal{F}[\log h_{\min_t}(x)]f(x)dx}\\
\rightarrow&\frac{1}{c-\infty+\int_{S^{n-1}\setminus U(x_0,\epsilon)}\mathcal{F}[\log h_{\min_t}(x)]f(x)dx}=0,
\end{align*}
as $x\rightarrow x_0$. This is a contradictory with $\Phi_1(\Omega_0)^{-1}=\frac{1}{c_0}>0$, namely, there is no point that makes $h_{\min_t}(x)\rightarrow 0$. Thus one can take $\delta>0$ independent of $t$ small enough to draw that $h_{\min_t}(x)\geq\delta$. The same discussion applies to $h_{\inf_t}(x)\geq\delta$. The support function for low one-dimensional convex bodies can be similarly proven. At the same time, we can obtain $\Omega_t$ is a convex body containing the origin in its interior point, i.e. $\Omega_t\in\mathcal{K}_o^n$.

Next, we will derive at the uniform upper bound of $h(x,t)$. We have attained $\Omega_t\in\mathcal{K}_o^n$, thus from Lemma \ref{lem51}, there is
\begin{align*}
h(x,t)\geq x\cdot x^t_{\max} h(x^t_{\max},t), \quad \forall x\in S^{n-1},
\end{align*}
where $x^t_{\max}$ is a point such that $h(x^t_{\max},t)=\max_{S^{n-1}}h(\cdot,t)$. Now, from Lemma \ref{lem42}, we obtain
\begin{align*}
\Phi(\Omega_0)=\Phi(\Omega_t)=&\int_{S^{n-1}}\log h(x,t)f(x)dx\\
\geq &\int_{S^{n-1}}f(x)\log[h(x^t_{\max},t)x\cdot x^t_{\max}]dx\\
\geq&\log h(x^t_{\max},t)\!\int_{S^{n-1}}\!f(x)dx+\int_{\{x\in S^{n-1}:\!x\cdot x^t_{\max}\geq\frac{1}{2}\}}\!f(x)\log(x\cdot x^t_{\max})dx\\
\geq&C\log h(x^t_{\max},t)-c\int_{\{x\in S^{n-1}: x\cdot x^t_{\max}\geq\frac{1}{2}\}}f(x)dx\\
\geq&C\log h(x^t_{\max},t)-c_1.
\end{align*}
This yields
\begin{align*}
\sup h(x^t_{\max},t)\leq e^{\frac{\Phi(\Omega_0)+c_1}{C}}.
\end{align*}
Here $C$, $c$ and $c_1$ are positive constants independent of $t$.
\end{proof}

\begin{lemma}\label{lem53}Let $\Omega_t$ be a smooth strictly convex solution to the flow (\ref{eq107}) in $\mathbb{R}^n$ and $u(X,t)$ be the smooth admissible solution of (\ref{eq101}) in $\Omega_t$, and $f$ be a positive smooth function on $S^{n-1}$. Then there is a positive constant $C$ independent of $t$ such that
\begin{align}\label{eq503}|\nabla h(x,t)|\leq C,\quad\forall(x,t)\in S^{n-1}\times [0,T),\end{align}
and
\begin{align}\label{eq504}|\nabla \rho(v,t)|\leq C,\quad \forall(v,t)\in S^{n-1}\times [0,T).\end{align}
\end{lemma}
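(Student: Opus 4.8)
The plan is to obtain the two gradient bounds directly from the $C^0$ estimates of Lemma \ref{lem52}, using only the classical pointwise identities relating the support function, the radial function and their first derivatives on $S^{n-1}$; no new a priori analysis of the flow is needed.

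First I would prove (\ref{eq503}). Recall from (\ref{eq202}) that, for each fixed $t$, $\overline{\nabla}h(x,t)=\nabla h(x,t)+h(x,t)x=X(x,t)$ is the point of $\partial\Omega_t$ whose outer unit normal is $x$, and that $\nabla h(\cdot,t)$ is tangent to $S^{n-1}$, hence orthogonal to $x$. Writing $v=X(x,t)/|X(x,t)|\in S^{n-1}$, so that $|X(x,t)|=\rho(v,t)$, the Pythagorean relation gives $|\nabla h(x,t)|^2=|X(x,t)|^2-h(x,t)^2=\rho(v,t)^2-h(x,t)^2\le\rho(v,t)^2$. Invoking the uniform upper bound $\rho\le C$ from (\ref{eq502}) yields (\ref{eq503}) with a constant depending only on the one in Lemma \ref{lem52}.

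For (\ref{eq504}) I would use the dual identity. Parametrize $\partial\Omega_t$ by $v\mapsto\rho(v,t)v$; the tangent vectors $\nabla_i\rho\,v+\rho\,e_i$ must be orthogonal to the outer unit normal $x=\alpha_{\Omega_t}(v)$ at that boundary point, which forces $\langle e_i,x\rangle=-(\nabla_i\rho/\rho)\langle v,x\rangle$. Decomposing $x$ in the orthonormal frame $\{v,e_1,\dots,e_{n-1}\}$ and using $|x|=1$ then gives $\langle v,x\rangle^2=\rho^2/(\rho^2+|\nabla\rho|^2)$, and since $h(x,t)=\rho(v,t)\langle v,x\rangle$ this rearranges to $|\nabla\rho(v,t)|^2=\rho(v,t)^2\big(\rho(v,t)^2/h(x,t)^2-1\big)$. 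Applying (\ref{eq501}) and (\ref{eq502}) (so that $\rho^2/h^2\le C^4$ and $\rho\le C$) bounds the right-hand side by $C^2(C^4-1)$, which is (\ref{eq504}).

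Since both steps are algebraic consequences of the already established $C^0$ bounds and of standard convex-geometry identities, there is essentially no obstacle; the only point requiring care is that the identities be applied at the matching pairs $(x,v)$ under the radial Gauss map $\alpha_{\Omega_t}$, which is legitimate because $\Omega_t\in\mathcal{K}_o^n$ for every $t$ by Lemma \ref{lem52} (so $\alpha_{\Omega_t}$ is defined and continuous off a set of measure zero, and $h(\cdot,t)>0$). Equivalently, one could phrase the whole argument by noting that a convex body trapped between $B_{1/C}$ and $B_C$ has uniformly Lipschitz support and radial functions, but the identity-based computation above gives the explicit constants directly.
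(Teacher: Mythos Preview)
Your proof is correct and follows essentially the same approach as the paper: both deduce the gradient bounds directly from the $C^0$ estimates of Lemma \ref{lem52} via the standard identities $\rho^2=h^2+|\nabla h|^2$ and $h=\rho^2/\sqrt{\rho^2+|\nabla\rho|^2}$. The paper simply cites these two identities without derivation, whereas you supply the short convex-geometry computations that justify them.
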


\begin{proof}
The desired results immediately follow from Lemma \ref{lem52} and the following identities (see e.g. \cite{LR})
\begin{align*}
h=\frac{\rho^2}{\sqrt{\rho^2+|\nabla\rho|^2}},\qquad\rho^2=h^2+|\nabla h|^2.\end{align*}
\end{proof}

\begin{lemma}\label{lem54}Let $\Omega_t$ be a smooth strictly convex solution to the flow (\ref{eq107}) in $\mathbb{R}^n$ and $u(X,t)$ be the smooth admissible solution of (\ref{eq101}) in $\Omega_t$, and $f$ be a positive smooth function on $S^{n-1}$. Then there is a positive constant $C$ independent of $t$ such that
\begin{align*}\frac{1}{C}\leq\eta(t)\leq C.\end{align*}
\end{lemma}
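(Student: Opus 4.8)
The plan is to unwind the definition \eqref{eq108} of $\eta(t)$ and bound the numerator and denominator separately using the $C^0$ and $C^1$ estimates already established. The denominator $\int_{S^{n-1}}f(x)\,dx$ is a fixed positive constant depending only on $f$, so it plays no role beyond supplying a uniform positive factor. For the numerator $\int_{S^{n-1}}\rho(v,t)^{p+1-k}|Du(X,t)|^{k+1}\,dv$, I would first invoke Lemma \ref{lem52}: since $\tfrac1C\le\rho(v,t)\le C$ uniformly in $t$, the factor $\rho(v,t)^{p+1-k}$ is pinched between two positive constants (the sign of the exponent $p+1-k$ only decides which of $C^{p+1-k}$ or $C^{-(p+1-k)}$ is the upper and which the lower bound, but both are finite and positive). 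Hence it remains to show that $|Du(X,t)|$, evaluated along $\partial\Omega_t$, is bounded above and below by positive constants independent of $t$.

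For the gradient term $|Du|$ I would argue as follows. Lemma \ref{lem52} (together with Lemma \ref{lem53}) shows that all the bodies $\Omega_t$ lie in a fixed compact family: $\tfrac1C\,\mathcal B\subset\Omega_t\subset C\,\mathcal B$ with origin in the interior, uniformly in $t$. Since $u(\cdot,t)$ is the admissible solution of the $k$-Hessian torsion problem \eqref{eq101} on $\Omega_t$, the a priori regularity theory for \eqref{eq101} (the same estimates of Wang \cite{WXJ,WXJ1} and the Pohozaev/divergence-structure identities recalled in Section \ref{sec2}, already used for the continuity statement in the proof of Lemma \ref{lem38} via \cite[Theorem 3.1]{WXJ1}) gives a bound $|Du|\le C_1$ on $\partial\Omega_t$ depending only on the inner and outer radii, hence uniform in $t$. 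For the lower bound one uses the boundary behaviour of the torsion solution: on a uniformly convex domain with uniformly bounded geometry, Hopf-type boundary estimates for the $k$-Hessian equation force $|Du(X,t)|\ge \tfrac1{C_1}>0$ at every boundary point $X\in\partial\Omega_t$, again with a constant independent of $t$. (Alternatively, one can read the lower bound off the Pohozaev identity: $\widetilde T_k(\Omega_t)=\tfrac1{k(n+2)}\int_{S^{n-1}}h|Du|^{k+1}\,dS_{n-k}$ is bounded below because $\widetilde T_k$ is monotone under inclusion and $\Omega_t\supset\tfrac1C\mathcal B$, and it is bounded above since $\Omega_t\subset C\mathcal B$; combined with the already-established two-sided bounds on $h$, $\sigma_{n-k}$ stays integrable and this pins $|Du|$ from below on a set of positive measure — and then uniform convexity propagates it everywhere.)

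Putting the pieces together: the numerator of $\eta(t)$ is an integral over $S^{n-1}$ of a quantity that is pinched between two positive constants independent of $t$, so it lies in $[\tfrac1{C'},C']$ for a uniform $C'$; dividing by the fixed positive constant $\int_{S^{n-1}}f\,dx$ yields $\tfrac1C\le\eta(t)\le C$ as claimed. The main obstacle here is genuinely the two-sided control of $|Du|$ along the moving boundary — in particular the \emph{lower} bound, which requires knowing that the admissible solutions of \eqref{eq101} enjoy a uniform Hopf lemma over the compact family $\{\Omega_t\}$; the uniform upper bound and all the $\rho$-factors are immediate consequences of Lemmas \ref{lem52}–\ref{lem53}. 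I would therefore organize the write-up as: (i) quote the fixed bounds on $\rho$ from Lemma \ref{lem52}; (ii) state and justify the uniform bounds $\tfrac1{C_1}\le|Du|\le C_1$ on $\partial\Omega_t$ citing \cite{WXJ,WXJ1} and the boundary-gradient estimates; (iii) substitute into \eqref{eq108} and read off the conclusion.
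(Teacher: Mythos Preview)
Your plan is correct and leads to the same conclusion, but the paper argues the two bounds slightly differently, and in each case its choice is a bit more direct.

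For the \emph{lower} bound of $\eta(t)$ the paper does not go through a pointwise lower bound on $|Du|$ at all. It simply observes from \eqref{eq108} and \eqref{eq304} that
\[
\eta(t)=\frac{(n-p)\,\widetilde Q_{k,n-p}(\Omega_t)}{\int_{S^{n-1}}f\,dx},
\]
and then invokes Lemma~\ref{lem41} (monotonicity of $\widetilde Q_{k,n-p}(\Omega_t)$ along the flow) to get $\eta(t)\ge (n-p)\widetilde Q_{k,n-p}(\Omega_0)/\int f$. This avoids any boundary-gradient analysis for the lower bound.

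For the \emph{upper} bound the paper does need $|Du|\le C$ on $\partial\Omega_t$, and for later lemmas it also records $|Du|\ge c$; but instead of citing abstract regularity or a Hopf lemma, it uses an explicit barrier. From Lemma~\ref{lem52} there are radii $0<\delta\le r\le R\le R_0$ with $B_r\subset\Omega_t\subset B_R$; on balls the $k$-Hessian torsion problem has the radial solution $u_R=c_{n,k}(R^2-|X|^2)$ with $|Du_R|=2c_{n,k}R$ on $\partial B_R$, and the comparison/maximum principle for $k$-Hessian equations (\cite{CA}) applied with an interior tangent ball $B_r$ and an exterior ball $B_R$ yields $c\delta\le|Du|\le CR_0$ on $\partial\Omega_t$ with constants independent of $t$. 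Your Hopf-lemma route and your Wang-regularity citation would achieve the same endpoints, but the explicit barrier is shorter and self-contained. Your parenthetical alternative via the Poho\v{z}aev identity only gives an integral lower bound on $|Du|^{k+1}$, and the ``propagation by uniform convexity'' step you mention is not obvious; I would drop that variant.
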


\begin{proof}
From the definition of $\eta(t)$ and Lemma \ref{lem41}, we can directly obtain the lower bound of $\eta(t)$, namely,
\begin{align*}
\eta(t)=\frac{\int_{S^{n-1}}\rho^{p+1-k}|D u(X,t)|^{k+1}dv}{\int_{S^{n-1}}f(x)dx}=\frac{(n-p)\widetilde{Q}_{k,{n-p}}(\Omega_t)}{\int_{S^{n-1}} f(x)dx}\geq\frac{(n-p)\widetilde{Q}_{k,{n-p}}(\Omega_0)}{\int_{S^{n-1}}f(x)dx}.
\end{align*}

Since $\Omega_t$ is a smooth strictly convex solution to the flow (\ref{eq107}) for any $t\in[0,T)$ and we have obtained uniform upper bound and uniform lower bound of $\Omega_t$ in Lemma \ref{lem52}. Thus there exist the balls $B_R$ and $B_r$ with radii of $R\leq R_0<\infty$ and $r\geq \delta>0$ such that $B_r\subset \Omega_t\subset B_R$, for the balls $B_R$ and $B_r$, we have for any $x\in S^{n-1}$,
\begin{align*}
\left\{
\begin{array}{lc}
S_k(D^2u_R(X(x)))=1\ \ \  \text{in} \ \ \ B_R,\\
u_R=0,\ \ \ \ \ \ \ \ \text{on} \ \ \ \ \partial B_R,\\
\end{array}
\right.
\end{align*}
and
\begin{align*}
\left\{
\begin{array}{lc}
S_k(D^2u_r(X(x)))=1\ \ \  \text{in} \ \ \ B_r,\\
u_r=0,\ \ \ \ \ \ \ \ \text{on} \ \ \ \ \partial B_r.\\
\end{array}
\right.
\end{align*}
The analysis of radial symmetric solutions provides an expression for the explicit solution \cite{NMA}, for example, for ball $B_R$, $u_R=c_{n,k}(R^2-|X(x)|^2)$, where $c_{n,k}$ depends on dimension $n$ and $k$, then $|Du_R|=2c_{n,k}R$. Similarly, $|Du_r|=2c_{n,k}r$.

Since $B_r\subset \Omega_t\subset B_R$ and $u=0$ on $\partial\Omega_t$, moreover, $u$ is a smooth admissible solution of (\ref{eq101}) on $\Omega_t$. For any $x\in S^{n-1}$ and $t\in[0,T)$, any point $X(x)\in\partial\Omega_t$, there exists ball $B_r$ such that $B_r\subset\Omega_t$ and $\partial\Omega_t\cap B_r=X(x)$. Because of the same equation and $u(\cdot,t)\geq 0=u_r(\cdot)$ on $\partial B_r$, hence using the maximum principle of $k$-Hessian equation \cite{CA}, we can obtain $u(\cdot,t)\geq u_r(\cdot)$ in $B_r$ and $u(X(x,t),t)=u_r(X(x))$, we have $|Du(X(x,t),t)|\geq|Du_r(X(x))|$. Similarly, we attain the upper bound $|Du(X(x,t),t)|\leq |Du_R(X(x))|$ by comparing it with $u_R$. Thus we obtain
\begin{align*}
c\delta\leq |Du(X(x,t),t)|\leq CR_0,
\end{align*}
where $c$ and $C$ independent of $t$.

The upper bound of $\eta(t)$ can be immediately obtained by upper bound of $|Du(X(x,t),t)|$ and $\rho(v,t)$.
\end{proof}

\subsection{$C^2$ estimate}
In this subsection, we establish the upper bound and the lower bound of principal curvature. This will show that equation (\ref{eq402}) is uniformly parabolic. Firstly, we establish the lower bound of $\sigma_{n-k}(x,t)$.
\begin{lemma}\label{lem55} Let $p< n-2$, under the conditions of Lemma \ref{lem52}, then there is a positive constant $C_0$ independent of $t$ such that
\begin{align*}
\sigma_{n-k}\geq C_0.
\end{align*}
\end{lemma}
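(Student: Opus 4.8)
The plan is to obtain a uniform lower bound for $\sigma_{n-k}(x,t)$ by differentiating the flow equation (\ref{eq402}) in time and using a maximum principle argument on an auxiliary quantity. First I would rewrite the scalar flow (\ref{eq402}) as $\frac{\partial h}{\partial t} = F(x,t)\sigma_{n-k}(\omega_{ij}) - \eta(t)h$ where $F(x,t) = \frac{h^2}{f(x)}\rho^{p-n}|Du(\overline{\nabla}h,t)|^{k+1}$ and $\omega_{ij} = h_{ij}+h\delta_{ij}$; by Lemmas \ref{lem52}, \ref{lem53} and \ref{lem54} together with the $C^0$ bounds on $|Du|$ established in the proof of Lemma \ref{lem54}, the factor $F$ is bounded above and below by positive constants independent of $t$, and the same holds for $\eta(t)$. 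The key structural feature is that $\sigma_{n-k}(\omega_{ij})$ is the $(n-k)$-th elementary symmetric function of the principal curvature radii, so $\sigma_{n-k}$ small corresponds to some principal curvature radius degenerating; since $p<n-2$, the exponent in the dual weight is arranged so that the zeroth-order terms in the evolution of $\sigma_{n-k}$ have a favorable sign.

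The main step is to derive the evolution equation for an appropriate test function. A natural candidate is $G = \frac{\sigma_{n-k}(\omega_{ij})}{h - \varepsilon_0}$ or simply $\log \sigma_{n-k}$ with an added multiple of $|\nabla h|^2$ or $h$, following the standard technique for Gauss-curvature-type flows (as in \cite{CH,LR}). I would compute $\frac{\partial}{\partial t}\sigma_{n-k}$ using $\frac{\partial}{\partial t}\omega_{ij} = (\partial_t h)_{ij} + (\partial_t h)\delta_{ij}$, apply the linearized operator $L = F \cdot \sigma_{n-k}^{pq}(\omega_{ij})(\nabla_p\nabla_q \cdot + \cdot\,\delta_{pq})$ which is elliptic on the space of admissible (convex) bodies, and then examine the spatial minimum of the test function $G$ at a fixed time. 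At such a minimum point the first derivatives vanish and the Hessian is nonnegative, so the elliptic terms drop out or have a definite sign, and one is left with an ordinary differential inequality of the form $\frac{d}{dt}(\min G) \geq -C\min G + c$ (using that $F$, $\eta$ and $|Du|^{k+1}$ and their derivatives are controlled), which by an ODE comparison argument forces $\min G$ to stay bounded below by a positive constant depending only on the initial data and the uniform bounds.

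The hard part will be controlling the terms that arise from differentiating $F(x,t)$ — in particular the term involving the derivatives of $|Du(\overline{\nabla}h,t)|^{k+1}$ with respect to $x$ and $t$, since $u$ itself solves the $k$-Hessian equation (\ref{eq101}) on the moving domain $\Omega_t$ and $Du$ is only controlled through the comparison-with-balls argument of Lemma \ref{lem54}, not by an explicit formula. One would need gradient estimates for $u$ up to the boundary that are uniform in $t$ (available from the regularity theory for the $k$-Hessian equation, e.g.\ \cite{WXJ1}, together with the uniform inner and outer ball conditions), and one must check that the sign condition $p<n-2$ is exactly what makes the coefficient of $\sigma_{n-k}$ in the resulting differential inequality have the right sign to close the estimate. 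Once the lower bound on $\sigma_{n-k}$ is in hand, it serves as the starting point for the full $C^2$ estimate (bounding the principal curvature radii from above and below), which is taken up in the lemmas that follow.
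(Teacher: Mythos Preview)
Your strategy---auxiliary function plus maximum principle at a spatial minimum---matches the paper's, and you correctly identify both the role of the condition $p<n-2$ and the difficulty of differentiating $|Du|^{k+1}$. The paper's execution differs from your sketch in the choice of test function: rather than $\sigma_{n-k}/(h-\varepsilon_0)$ or $\log\sigma_{n-k}$ with an added barrier, it takes
\[
E=\log\bigl(G\sigma_{n-k}\bigr)-A\frac{\rho^2}{2},\qquad G=\frac{h^2}{f}\rho^{p-n}|Du|^{k+1},
\]
i.e.\ the log of the \emph{entire} speed $F=G\sigma_{n-k}$ in the flow equation, minus a large multiple of $\rho^2/2$. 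This choice buys a clean simplification: at the spatial minimum $F_i=0$ and $F_{ij}\ge0$, so the second-order terms $Gd_{ij}F_{ij}$ are nonnegative and the cross terms $(h_t)_i=(F-\eta h)_i$ reduce to $-\eta h_i$; the coefficient $(p+2-n)$ then appears as the sole surviving weight on the dangerous part, which is why $p<n-2$ enters. The subtracted $A\rho^2/2$ provides the strictly positive lower-order term $A\eta\rho^2/2$ that forces $\partial_tE>0$ once $E$ is sufficiently negative. Your alternative auxiliary functions would in principle work but would not decouple the gradient terms as cleanly, leaving more cross terms to estimate.

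One point to tighten: you anticipate needing uniform boundary gradient bounds for $u$, but the computation of $\partial_t|Du|$ produces terms involving $D^2u$ at boundary points (see the paper's formula for $\partial|Du|/\partial t$), so you in fact need a uniform $C^2$ bound on $u$ up to $\partial\Omega_t$. The paper obtains this by combining the $|Du|$ comparison-with-balls estimate already established in Lemma~\ref{lem54} with Schauder-type interior/boundary regularity for the $k$-Hessian equation.
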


\begin{proof}Combining the auxiliary function in \cite{KR0}, we construct an auxiliary function that conforms to the curvature flow (\ref{eq107}) as follows:
\begin{align*}
E=&\log\bigg(\frac{h^2}{f(x)}\rho^{p-n}|Du|^{k+1}\sigma_{n-k}\bigg)-A\frac{\rho^2}{2},
\end{align*}
where $A$ is a positive constant which will be chosen later.

Denote $\frac{h^2}{f(x)}\rho^{p-n}|Du|^{k+1}\sigma_{n-k}=G\sigma_{n-k}=F$ and $\frac{\partial h(x,t)}{\partial t}=h_t$, then $h_t=F-\eta(t)h$ and $\rho_t=\frac{\rho}{h}(F-\eta(t)h)$ by (\ref{eq402}) and (\ref{eq403}). Thus the evolution equation of $E$ is written as
\begin{align*}
\frac{\partial E}{\partial t}=&\frac{1}{F}\frac{\partial F}{\partial t}-A\frac{\partial(\frac{\rho^2}{2})}{\partial t}.
\end{align*}
Now, we compute the evolution equation of $F$,
\begin{align*}
\frac{\partial F}{\partial t}=\sigma_{n-k}\frac{\partial G}{\partial t}+G\frac{\partial\sigma_{n-k}}{\partial t},
\end{align*}
where
\begin{align*}
\frac{\partial G}{\partial t}=\frac{1}{f(x)}\bigg(2h\rho^{p-n}|Du|^{k+1}h_t+(p-n)h^2\rho^{p-n-1}|Du|^{k+1}\rho_t+(k+1)h^2\rho^{p-n}|Du|^k\frac{\partial |Du|}{\partial t}\bigg).
\end{align*}
Since $|Du(X(x,t),t)|=-\langle Du(X(x,t),t),x\rangle$, $X(x,t)=h_ie_i+hx$, then
\begin{align}\label{eq505}
\frac{\partial |Du(X(x,t),t)|}{\partial t}=-[\langle (D^2u)x,(h_{ti}e_i+h_tx)\rangle+\langle D\dot{u},x\rangle].
\end{align}
From $u(X(x,t),t)=0$ on $\partial\Omega_t$, taking the derivative of both sides with respect to $t$, then we obtain
\begin{align*}
\dot{u}+Du \frac{\partial X(x,t)}{\partial t}=0,
\end{align*}
thus
\begin{align}\label{eq506}
\dot{u}=-Du\cdot (h_{ti}e_i+h_tx)=|Du|x\cdot (h_{ti}e_i+h_tx)=|Du|h_t(x).
\end{align}
From (\ref{eq506}), we further calculate
\begin{align}\label{eq507}
\langle D\dot{u},x\rangle=&\langle D(|Du|h_t),x\rangle=(\langle |Du|^{-1}Du D^2u,x\rangle)h_t+\langle|Du|(\nabla h)_t,x\rangle\\
\nonumber=&(\langle |Du|^{-1}Du D^2u,x\rangle)h_t+\langle|Du|(h_ie_i+hx)_t,x\rangle\\
\nonumber=&(\langle |Du|^{-1}Du D^2u,x\rangle)h_t+|Du|h_t.
\end{align}
Substituting (\ref{eq507}) into (\ref{eq505}), we obtain
\begin{align}\label{eq508}
&\frac{\partial |Du(X(x,t),t)|}{\partial t}\\
\nonumber=&-h_{ti}\langle (D^2u)x,e_i\rangle-h_t \langle(D^2u)x,x\rangle -(\langle |Du|^{-1}Du D^2u,x\rangle)h_t-|Du|h_t\\
\nonumber=&-(h_t)_i\langle (D^2u)x,e_i\rangle-\bigg(\langle(D^2u)x,x\rangle +(\langle |Du|^{-1}Du D^2u,x\rangle)+|Du|\bigg)h_t.
\end{align}
Thus combining (\ref{eq508}), we obtain
\begin{align}\label{eq509}
&\frac{\partial G}{\partial t}\sigma_{n-k}\\
\nonumber=&\frac{\sigma_{n-k}}{f(x)}\bigg\{2h\rho^{p-n}|Du|^{k+1}h_t+(p-n)h^2\rho^{p-n-1}|Du|^{k+1}\rho_t\\
\nonumber&+\!(k\!+\!1)h^2\!\rho^{p-n}\!|Du|^k\!\bigg[\!-\!(h_t)_k\langle (D^2u)x,e_k\rangle\!-\!\bigg(\!\langle(\!D^2u)x,x\rangle\!+\!(\!\langle |Du|^{-1}Du D^2u,x\rangle\!)\!+\!|Du|\!\bigg)h_t\!\bigg]\!\bigg\}.
\end{align}

Recall $\sigma_{n-k}(x,t)=\sigma_{n-k}(\omega_{ij}(x,t))$, $\omega_{ij}(x,t)=h_{ij}(x,t)+h(x,t)\delta_{ij}$ and $d_{ij}=\frac{\partial \sigma_{n-k}}{\partial \omega_{ij}}$, then
\begin{align}\label{eq510}
\frac{\partial \sigma_{n-k}}{\partial t}=&d_{ij}\frac{\partial \omega_{ij}}{\partial t}=d_{ij}\nabla_{ij}(h_t)+d_{ij}h_t\delta_{ij}\\
\nonumber=&d_{ij}\nabla_{ij}(F-\eta(t)h)+d_{ij}\delta_{ij}(F-\eta(t)h)\\
\nonumber=&d_{ij}F_{ij}-\eta(t)d_{ij}h_{ij}+d_{ij}\delta_{ij}(F-\eta(t)h)\\
\nonumber=&d_{ij}F_{ij}+Fd_{ij}\delta_{ij}-\eta(t)d_{ij}(h_{ij}+h\delta_{ij})\\
\nonumber=&d_{ij}F_{ij}+Fd_{ij}\delta_{ij}-(n-k)\eta(t)\sigma_{n-k},
\end{align}
where we use the $(n-k)$-degree homogeneity of $\sigma_{n-k}$ in the last equality and obtain $d_{ij}\omega_{ij}=(n-k)\sigma_{n-k}$.

We know that $\rho^2=h^2+|\nabla h|^2$, thus
\begin{align}\label{eq511}
\frac{\partial(\frac{\rho^2}{2})}{\partial t}=&\frac{1}{2}\frac{\partial(h^2+|\nabla h|^2)}{\partial t}=hh_t+\sum h_ih_{it}\\
\nonumber=&h(F-\eta(t)h)+\sum h_i(F_i-\eta(t)h_i)=hF+\sum h_iF_i-\eta(t)\rho^2.
\end{align}

Combining (\ref{eq509}), (\ref{eq510}) and (\ref{eq511}), we get
\begin{align*}
\frac{\partial E}{\partial t}=&\frac{1}{F}\bigg\{\frac{\sigma_{n-k}}{f(x)}\bigg[2h\rho^{n-p}|Du|^{k+1}h_t+(p-n)h^2\rho^{p-n-1}|Du|^k\rho_t\\
&\!+\!(\!k\!+\!1\!)h^2\!\rho^{p-n}\!|Du|^k\!\bigg[\!-\!(h_t)_i\langle (D^2u)x,e_i\rangle\!-\!\bigg(\!\langle(\!D^2u)x,x\rangle\!+\!(\!\langle |Du|^{-1}Du D^2u,x\rangle\!)\!+\!|Du|\!\bigg)h_t\!\bigg]\\
&+G\bigg(d_{ij}F_{ij}+Fd_{ij}\delta_{ij}-(n-k)\eta(t)\sigma_{n-k}\bigg)\bigg\}-A[hF+\sum h_iF_i-\eta(t)\rho^2]\\
=&\frac{\sigma_{n-k}}{f(x)}\bigg[\frac{2h\rho^{p-n}|Du|^{k+1}(F-\eta(t)h)}{F}+\frac{(p-n)h\rho^{p-n}|Du|^{k+1}(F-\eta(t)h)}{F}\\
&-\frac{(k+1)h^2\rho^{p-n}|Du|^k((F-\eta(t)h))_i\langle (D^2u)x,e_i\rangle}{F}\\
&-\frac{(k+1)h^2\rho^{p-n}|Du|^k\bigg(\langle(D^2u)x,x\rangle +(\langle |Du|^{-1}Du D^2u,x\rangle)+|Du|\bigg)(F-\eta(t)h)}{F}\bigg]\\
&+\frac{G}{F}\bigg(d_{ij}F_{ij}+Fd_{ij}\delta_{ij}-(n-k)\eta(t)\sigma_{n-k}\bigg)-A[hF+\sum h_iF_i-\eta(t)\rho^2].
\end{align*}
Suppose the spatial minimum of $E$ is attained at a point $(x_0,t)$, then $F_i=0$, $F_{ij}\geq 0$, thus dropping some positive terms and rearranging terms yield
\begin{align*}
\frac{\partial E}{\partial t}\!\geq &\frac{\sigma_{n-k}}{f(x)}\bigg[\frac{(p\!+\!2\!-\!n)h\rho^{p-n}\!|Du|^{k+1}(e^{E+A\frac{\rho^2}{2}}\!-\!\eta(t)h)}{e^{E+A\frac{\rho^2}{2}}}\!
+\!\frac{(k\!+\!1)h^2\!\rho^{p-n}\!|Du|^k\eta(t)h_i\langle (D^2u)x,e_i\rangle}{e^{E+A\frac{\rho^2}{2}}}\\
&-\frac{(k\!+\!1)h^2\!\rho^{p-n}\!|Du|^k\bigg(\!\langle(D^2u)x,x\rangle \!+\!(\langle |Du|^{-1}Du D^2u,x\rangle)\!+\!|Du|\!\bigg)(e^{E+A\frac{\rho^2}{2}}\!-\!\eta(t)h)}{e^{E+A\frac{\rho^2}{2}}}\bigg]\\
&-(n-k)\eta(t)+A\frac{\eta(t)\rho^2}{2}+A\bigg(\frac{\eta(t)\rho^2}{2}-he^{E+A\frac{\rho^2}{2}}\bigg)\\
\!\geq &\frac{\sigma_{n-k}}{f(x)}\bigg[\frac{(p\!+\!2\!-\!n)h\rho^{p-n}\!|Du|^{k+1}(e^{E+A\frac{\rho^2}{2}}\!-\!\eta(t)h)}{e^{E+A\frac{\rho^2}{2}}}\!\\
&-\frac{(k\!+\!1)h^2\!\rho^{p-n}\!|Du|^k\bigg(2|D^2u|+|Du|\bigg)(e^{E+A\frac{\rho^2}{2}}\!-\!\eta(t)h)}{e^{E+A\frac{\rho^2}{2}}}\bigg]\\
&-(n-k)\eta(t)+A\frac{\eta(t)\rho^2}{2}+A\bigg(\frac{\eta(t)\rho^2}{2}-he^{E+A\frac{\rho^2}{2}}\bigg)\\
=&\frac{\sigma_{n-k}}{f(x)}\frac{e^{E+A\frac{\rho^2}{2}}\!-\!\eta(t)h}{e^{E+A\frac{\rho^2}{2}}}
\bigg[(p+2-n)h\rho^{p-n}\!|Du|^{k+1}-(k+1)h^2\rho^{p-n}\!|Du|^{k+1}\\
&-2(k\!+\!1)h^2\!\rho^{p-n}\!|Du|^k|D^2u|\bigg]+\frac{\eta(t)\rho^2}{2}\bigg(A-\frac{2(n-k)}{\rho^2}\bigg)+A\bigg(\frac{\eta(t)\rho^2}{2}-he^{E+A\frac{\rho^2}{2}}\bigg).\\
\end{align*}

We have obtained the uniform bound of $|Du(X(x,t),t)|$ in proof of Lemma \ref{lem54}, by the virtue of Schauder's theory (see example Chapter 17 in \cite{GI}), we are easy to obtain $|D^2u(X(x,t),t)|\leq \widehat{C}$ ($\widehat{C}$ is a positive constant independent of $t$) on $S^{n-1}\times [0,T)$.

Now, choose $A>\max\frac{2(n-k)}{\rho^2}$. Denote
\begin{align*}
L_1=\frac{e^{E+A\frac{\rho^2}{2}}\!-\!\eta(t)h}{e^{E+A\frac{\rho^2}{2}}},
\end{align*}
\begin{align*}
L_2=\bigg[(p+2-n)h\rho^{p-n}\!|Du|^{k+1}-(k+1)h^2\rho^{p-n}\!|Du|^{k+1}-2(k\!+\!1)h^2\!\rho^{p-n}\!|Du|^k|D^2u|\bigg],
\end{align*}
\begin{align*}
L_3=\frac{\eta(t)\rho^2}{2}\bigg(A-\frac{3(n-k)}{\rho^2}\bigg),
\end{align*}
\begin{align*}
L_4=A\bigg(\frac{\eta(t)\rho^2}{2}-he^{E+A\frac{\rho^2}{2}}\bigg).
\end{align*}
Thus when $p<n-2$, if $E$ becomes appropriately negative, namely
\begin{align*}
E<\min\bigg\{\log\frac{\eta(t)\rho^2}{2h}-(n-k),\log\eta(t)h-(n-k)\bigg\}.
\end{align*}
Hence there are $L_1<0,L_2<0,L_3>0,L_4>0$. Then
\begin{align*}
\frac{\partial E}{\partial t}\geq \frac{\sigma_{n-k}}{f(x)}L_1L_2+L_3+L_4>0,
\end{align*}
thus $E$ has the uniform lower bound. Therefore we obtain the uniform lower bound of $\sigma_{n-k}$.
\end{proof}

\begin{lemma}\label{lem56} Let $p< n-2$, under the conditions of Lemma \ref{lem52}, then there is a positive constant $C_1$ independent of $t$ such that
\begin{align*}
\sigma_{n-k}\leq C_1.
\end{align*}
\end{lemma}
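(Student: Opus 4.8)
Since the matrix of principal curvature radii $\omega_{ij}=h_{ij}+h\delta_{ij}$ stays positive definite along the flow, an upper bound $\sigma_{n-k}(\omega_{ij})\le C_{1}$ will follow once we bound the largest eigenvalue $\lambda_{\max}(\omega_{ij})$ from above, because $\sigma_{n-k}(\omega_{ij})\le\binom{n-1}{n-k}\lambda_{\max}(\omega_{ij})^{n-k}$. The two-sided bound $C_{0}\le\sigma_{n-k}\le C_{1}$ would by itself \emph{not} control $\lambda_{\max}$, so $\lambda_{\max}$ (equivalently $\operatorname{tr}\omega=\Delta h+(n-1)h$, whose $h$-part is already bounded by Lemma \ref{lem52}) is the genuinely right quantity to estimate. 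The plan is thus to run a maximum-principle argument for $\lambda_{\max}(\omega_{ij})$ along the flow, in the same spirit as the proof of Lemma \ref{lem55}.

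Concretely, I would consider an auxiliary function built from $\lambda_{\max}$, for instance
\begin{align*}
W(x,t)=\log\lambda_{\max}\bigl(\omega_{ij}(x,t)\bigr)-A\tfrac{\rho^{2}}{2},
\end{align*}
with a constant $A>0$ to be chosen (a different barrier term may ultimately be needed; at a spatial maximum one rotates the frame so that $\omega_{ij}$ is diagonal and works with the smooth function $\log\omega_{11}$). Using $\partial_{t}\omega_{ij}=\nabla_{ij}F+F\delta_{ij}-\eta\omega_{ij}$ — which follows from (\ref{eq402}) exactly as in the derivation of (\ref{eq510}) — together with the commutation identities for covariant derivatives on $S^{n-1}$, I would compute $\partial_{t}W$ and evaluate it at a point where $W(\cdot,t)$ attains its spatial maximum. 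There $\nabla W=0$ and $\nabla^{2}W\le0$, which let one eliminate the third-order derivatives of $h$ in favour of $\nabla\rho$, $\nabla^{2}\rho$ and terms quadratic in $\nabla\omega$; the latter are absorbed by the concavity of $\log\sigma_{n-k}$ (equivalently of $\sigma_{n-k}^{1/(n-k)}$) on the positive cone, as in the classical $C^{2}$ estimates for Hessian-type flows. Inserting the bounds of Lemmas \ref{lem52}--\ref{lem54} and the Schauder bound $|D^{2}u|\le\widehat{C}$ (obtained as in the proof of Lemma \ref{lem55}), using the identities $\rho\rho_{i}=h_{k}\omega_{ki}$ and $\overline{\nabla}h=X$ (so that $\bigl|\nabla|Du|\bigr|\le C|D^{2}u|\,\lambda_{\max}$ via $\nabla_{i}(\overline{\nabla}h)=\omega_{ij}e_{j}$), and choosing $A$ suitably, one should arrive at a differential inequality at the spatial maximum that forces $\lambda_{\max}$ to be bounded there, the crucial favourable sign of the zeroth-order term coming — as in Lemma \ref{lem55} — from the hypothesis $p<n-2$. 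A maximum-principle/continuity argument (with $W$ bounded at $t=0$, since the initial hypersurface is smooth and strictly convex) then yields $\lambda_{\max}\le C$ uniformly in $t\in[0,T)$, and hence $\sigma_{n-k}\le C_{1}$.

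The main obstacle — and the reason this estimate is harder than its analogue for Gauss-curvature or $L_p$-Minkowski flows — is the presence of the extra factors $\rho^{p-n}$ and $|Du|^{k+1}$ inside the speed $F$. Differentiating $F$ twice produces terms containing $\nabla^{2}\rho$, $\nabla|Du|$ and $\nabla\sigma_{n-k}$, and at the spatial maximum each of these must be traced — through $\rho\rho_{i}=h_{k}\omega_{ki}$, $\overline{\nabla}h=X$ and the flow equation — back to quantities controlled by $\lambda_{\max}$ and the already-established $C^{0}$, $C^{1}$ and $|D^{2}u|$ bounds, after which the resulting cross terms have to be shown not to overwhelm the good concavity term once $A$ is fixed; keeping careful track of signs here (where $p<n-2$ enters) is the delicate part. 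A Tso-type speed estimate — applying the maximum principle to $F/(h-\varepsilon_{0})$ with $\varepsilon_{0}=\tfrac12\min h>0$ — is a tempting shortcut, but taming the $\rho^{p-n}|Du|^{k+1}$ factors still appears to require control of $\operatorname{tr}\omega$ in the end, so estimating $\lambda_{\max}$ directly seems the cleaner route.
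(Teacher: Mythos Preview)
Your route is genuinely different from the paper's, and as sketched it has a gap. The paper does \emph{not} bound $\lambda_{\max}(\omega_{ij})$ first; it runs precisely the Tso-type speed estimate you set aside. The auxiliary function is
\[
M=\frac{1}{1-\beta\rho^{2}/2}\cdot\frac{G\sigma_{n-k}}{h}=\frac{F}{h\,(1-\beta\rho^{2}/2)},
\]
with $\beta>0$ small enough that the denominator is uniformly positive. The mechanism you missed is that the $\rho^{2}/2$ barrier, placed in the denominator, produces in $(\partial_{t}-Gd_{ij}\nabla_{ij})M$ the term $-\tfrac{\beta}{1-\beta\rho^{2}/2}\,M\,Gd_{ij}\omega_{mi}\omega_{mj}$, and the Newton--Maclaurin-type inequality $d_{ij}\omega_{mi}\omega_{mj}\ge(n-k)\sigma_{n-k}^{1+1/(n-k)}$ converts this into a dominant $-C_{3}M^{2+1/(n-k)}$. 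The factors $\rho^{p-n}$ and $|Du|^{k+1}$ in $G$ are already two-sided bounded by Lemmas~\ref{lem52}--\ref{lem54} (and the bound on $|D^{2}u|$), so they only enter the constants in front of $M$ and $M^{2}$; no control of $\operatorname{tr}\omega$ is needed, contrary to what you assert. One arrives at $\partial_{t}M\le C_{1}M+C_{2}M^{2}-C_{3}M^{2+1/(n-k)}$, which forces $M$, hence $\sigma_{n-k}$, to stay bounded. (The hypothesis $p<n-2$ is carried over from Lemma~\ref{lem55} but does not play the sign role you attribute to it here.)

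Your own barrier $-A\rho^{2}/2$ has the wrong sign for an upper bound. In $(\partial_{t}-Gd_{ij}\nabla_{ij})\bigl(-A\rho^{2}/2\bigr)$ the same computation gives $+AGd_{ij}\omega_{mi}\omega_{mj}$, which is large and positive whenever any principal radius blows up, so at a spatial maximum of $W$ it pushes the wrong way; the subtractive $-A\rho^{2}/2$ is tailored to lower bounds, which is exactly why it appears in Lemma~\ref{lem55}. Without a correctly signed barrier you have no term that beats $(n{+}1{-}k)G\sigma_{n-k}$ in the evolution of $\omega_{11}$---the concavity of $\sigma_{n-k}$ only handles the gradient-squared terms, not this zeroth-order one. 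In the paper's scheme, bounding individual principal radii is postponed to Lemma~\ref{lem57} and \emph{uses} the $\sigma_{n-k}$ bound of Lemma~\ref{lem56} as input, not the other way around.
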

\begin{proof}
Combing the curvature flow (\ref{eq107}), we considering a following auxiliary function which was first introduced by Ivaki \cite{IMN} and subsequently applied to Orlicz-Minkowski flows \cite{BP}.

\begin{align*}
M=\frac{1}{1-\beta\frac{\rho^2}{2}}\frac{G\sigma_{n-k}}{h},
\end{align*}
where $\beta$ is a positive constant such that $2\beta\leq\rho^2\leq\frac{2}{\beta}$ for all $t\in[0,T)$ (know from Lemma \ref{lem52}). Suppose $(x_1,t)$ is a  spatial maximum value point of $M$. Then at point $(x_1,t)$,
\begin{align}\label{eq512}
\nabla_iM=0, \quad \text{i.e.}\quad \frac{\beta}{1-\beta\frac{\rho^2}{2}}\nabla_i\bigg(\frac{\rho^2}{2}\bigg)\frac{G\sigma_{n-k}}{h}+\nabla_i\bigg(\frac{G\sigma_{n-k}}{h}\bigg)=0,
\end{align}
and
\begin{align}\label{eq513}
\nabla_{ij}M\leq 0.
\end{align}
Now, we estimate $M$, from (\ref{eq513}), we obtain
\begin{align}\label{eq514}
\frac{\partial M}{\partial t}\leq &\frac{\partial M}{\partial t}-Gd_{ij}\nabla_{ij}M\\
\nonumber=&\frac{\partial \bigg(\frac{1}{1-\beta\frac{\rho^2}{2}}\frac{G\sigma_{n-k}}{h}\bigg)}{\partial t}-Gd_{ij}\nabla_{ij}\bigg(\frac{1}{1-\beta\frac{\rho^2}{2}}\frac{G\sigma_{n-k}}{h}\bigg)\\
\nonumber=&\frac{1}{1-\beta\frac{\rho^2}{2}}\bigg[\frac{\partial\bigg(\frac{G\sigma_{n-k}}{h}\bigg)}{\partial t}-Gd_{ij}\nabla_{ij}\bigg(\frac{G\sigma_{n-k}}{h}\bigg)\bigg]\\
\nonumber&+\frac{\beta}{\bigg(1-\beta\frac{\rho^2}{2}\bigg)^2}\frac{G\sigma_{n-k}}{h}\bigg[\frac{\partial(\frac{\rho^2}{2})}{\partial t}-Gd_{ij}\nabla_{ij}\bigg(\frac{\rho^2}{2}\bigg)\bigg]\\
\nonumber&-2Gd_{ij}\frac{\beta}{\bigg(1-\beta\frac{\rho^2}{2}\bigg)^2}\nabla_j\bigg(\frac{G\sigma_{n-k}}{h}\bigg)\nabla_i\bigg(\frac{\rho^2}{2}\bigg)\\
\nonumber&-2Gd_{ij}\frac{\beta^2}{\bigg(1-\beta\frac{\rho^2}{2}\bigg)^3}\frac{G\sigma_{n-k}}{h}\nabla_i\bigg(\frac{\rho^2}{2}\bigg)\nabla_j\bigg(\frac{\rho^2}{2}\bigg)\\
\nonumber=&\frac{1}{1-\beta\frac{\rho^2}{2}}\bigg[\frac{\partial\bigg(\frac{G\sigma_{n-k}}{h}\bigg)}{\partial t}-Gd_{ij}\nabla_{ij}\bigg(\frac{G\sigma_{n-k}}{h}\bigg)\bigg]\\
\nonumber&+\frac{\beta}{\bigg(1-\beta\frac{\rho^2}{2}\bigg)^2}\frac{G\sigma_{n-k}}{h}\bigg[\frac{\partial(\frac{\rho^2}{2})}{\partial t}-Gd_{ij}\nabla_{ij}\bigg(\frac{\rho^2}{2}\bigg)\bigg]\\
\nonumber&-2Gd_{ij}\frac{\beta}{\bigg(1-\beta\frac{\rho^2}{2}\bigg)^2}\nabla_i\bigg(\frac{\rho^2}{2}\bigg)\bigg[\nabla_j\bigg(\frac{G\sigma_{n-k}}{h}\bigg)
+\frac{\beta}{1-\beta\frac{\rho^2}{2}}\frac{G\sigma_{n-k}}{h}\nabla_j\bigg(\frac{\rho^2}{2}\bigg)\bigg].
\end{align}
From (\ref{eq512}), we can simplify (\ref{eq514}) to
\begin{align}\label{eq515}
\frac{\partial M}{\partial t}\leq &\frac{1}{1-\beta\frac{\rho^2}{2}}\bigg[\frac{\partial\bigg(\frac{G\sigma_{n-k}}{h}\bigg)}{\partial t}-Gd_{ij}\nabla_{ij}\bigg(\frac{G\sigma_{n-k}}{h}\bigg)\bigg]\\
\nonumber&+\frac{\beta}{\bigg(1-\beta\frac{\rho^2}{2}\bigg)^2}\frac{G\sigma_{n-k}}{h}\bigg[\frac{\partial(\frac{\rho^2}{2})}{\partial t}-Gd_{ij}\nabla_{ij}\bigg(\frac{\rho^2}{2}\bigg)\bigg].
\end{align}
Now, we calculate
\begin{align*}
&\frac{\partial\bigg(\frac{G\sigma_{n-k}}{h}\bigg)}{\partial t}-Gd_{ij}\nabla_{ij}\bigg(\frac{G\sigma_{n-k}}{h}\bigg)\\
=&\frac{\sigma_{n-k}\frac{\partial G}{\partial t}+G\frac{\partial \sigma_{n-k}}{\partial t}}{h}-\frac{G\sigma_{n-k}h_t}{h^2}-Gd_{ij}\frac{\nabla_{ij}[G\sigma_{n-k}]}{h}+Gd_{ij}\frac{G\sigma_{n-k}\nabla_{ij}h}{h^2}\\
&+2Gd_{ij}\sigma_{n-k}\frac{\nabla_i[G\sigma_{n-k}]\nabla_jh}{h^2}-2Gd_{ij}\frac{[G\sigma_{n-k}]\nabla_ih\nabla_jh}{h^3}\\
=&\frac{\sigma_{n-k}\frac{\partial G}{\partial t}+Gd_{ij}[(G\sigma_{n-k}-\eta(t)h)_{ij}+h_t\delta_{ij}]}{h}-\frac{G\sigma_{n-k}h_t}{h^2}-Gd_{ij}\frac{\nabla_{ij}[G\sigma_{n-k}]}{h}\\
&+Gd_{ij}\frac{G\sigma_{n-k}\nabla_{ij}h}{h^2}-2Gd_{ij}\frac{[G\sigma_{n-k}]\nabla_ih\nabla_jh}{h^3}\\
=&\frac{\frac{\partial G}{\partial t}\sigma_{n-k}-Gd_{ij}[\eta(t)\omega_{ij}-G\sigma_{n-k}\delta_{ij}]}{h}-\frac{G\sigma_{n-k}(G\sigma_{n-k}-\eta(t)h)}{h^2}\\
&+Gd_{ij}\frac{G\sigma_{n-k}\nabla_{ij}h}{h^2}-2Gd_{ij}\frac{[G\sigma_{n-k}]\nabla_ih\nabla_jh}{h^3}\\
=&\frac{\frac{\partial G}{\partial t}\sigma_{n-k}-(n-k)\eta(t)G\sigma_{n-k}+G^2\sigma_{n-k}d_{ij}\delta_{ij}}{h}-\frac{(G\sigma_{n-k})^2}{h^2}+\frac{G\sigma_{n-k}\eta(t)}{h}\\
&+Gd_{ij}\frac{G\sigma_{n-k}\nabla_{ij}h}{h^2}-2Gd_{ij}\frac{[G\sigma_{n-k}]\nabla_ih\nabla_jh}{h^3}.
\end{align*}

From the definition of $G$ and (\ref{eq508}), we know that
\begin{align*}
&\frac{\partial G}{\partial t}=\frac{1}{f(x)}\bigg(2h\rho^{p-n}|Du|^{k+1}h_t+(p-n)h^2\rho^{p-n-1}|Du|^{k+1}\rho_t+(k+1)h^2\rho^{p-n}|Du|^k\frac{\partial|Du|}{\partial t}\bigg)\\
=&\frac{1}{f(x)}\bigg[2h\rho^{p-n}|Du|^{k+1}(G\sigma_{n-k}-\eta(t)h)+(p-n)h^2\rho^{p-n-1}|Du|^{k+1}\frac{\rho}{h}(G\sigma_{n-k}-\eta(t)h)\\
&-(k+1)h^2\rho^{p-n}|Du|^k\bigg((h_t)_i\langle (D^2u)x,e_i\rangle\!+\!\bigg(\!\langle(D^2u)x,x\rangle \!+\!(\langle |Du|^{-1}Du D^2u,x\rangle)\!+\!|Du|\!\bigg)h_t\!\bigg)\!\bigg]\\
=&\frac{1}{f(x)}\bigg[(p+2-n)h\rho^{p-n}|Du|^{k+1}(G\sigma_{n-k}-\eta(t)h)\\
&-(k+1)h^2\rho^{p-n}|Du|^k\bigg((h_t)_i\langle (D^2u)x,e_i\rangle\!+\!\bigg(\!\langle(D^2u)x,x\rangle \!+\!(\langle |Du|^{-1}Du D^2u,x\rangle)\!+\!|Du|\!\bigg)h_t\!\bigg)\!\bigg].
\end{align*}

Recall that $\rho^2=h^2+|\nabla h|^2$, then
\begin{align*}
&\frac{\partial(\frac{\rho^2}{2})}{\partial t}-Gd_{ij}\nabla_{ij}\bigg(\frac{\rho^2}{2}\bigg)\\
=&hh_t+\nabla_mh\nabla_m(h_t)-Gd_{ij}\bigg(h\nabla_{ij}h+\nabla_ih\nabla_jh+\nabla_mh\nabla_j\nabla_{mi}h+\nabla_{mi}h\nabla_{mj}h\bigg)\\
=&h(G\sigma_{n-k}-\eta(t)h)+[\sigma_{n-k}\nabla_mG\nabla_mh+G\nabla_m\sigma_{n-k}\nabla_mh-\eta(t)|\nabla h|^2]-Gd_{ij}h(\omega_{ij}-h\delta_{ij})\\
&-Gd_{ij}\nabla_ih\nabla_jh-Gd_{ij}(\omega_{mij}-h_m\delta_{ij})\nabla_mh-Gd_{ij}(\omega_{mi}-h\delta_{mi})(\omega_{mj}-h\delta_{mj})\\
=&(n+1-k)hG\sigma_{n-k}-\eta(t)\rho^2+\sigma_{n-k}\nabla_mh\nabla_mG-Gd_{ij}\omega_{mi}\omega_{mj},
\end{align*}
where we use the Codazzi equation $\omega_{imj}=\omega_{ijm}$ and the $(n-k)$-homogeneity of $\sigma_{n-k}$ in the last equality. Here
\begin{align*}
\nabla_mG=\frac{-f_m}{f^2}h|Du|^{k+1}+\frac{1}{f}h_m|Du|^{k+1}+(k+1)\frac{h}{f}|Du|^k|Du|_m,
\end{align*}
since $|Du|^2=Du\cdot Du$, then $\nabla_m|Du|^2=2\nabla_m Du\cdot Du$, thus
\begin{align*}\nabla_m|Du|=|Du|^{-1}\nabla_m Du\cdot Du\leq |D^2u|.\end{align*}
Hence we can obtain $\nabla_mG\leq \widetilde{C}$.

Substitute the above calculations into (\ref{eq515}) and we use a property $d_{ij}\omega_{im}\omega_{jm}\geq (n-k)(\sigma_{n-k})^{1+\frac{1}{n-k}}$ of $\sigma_{n-k}$ (see \cite{AB0} for details), thus
\begin{align*}
&\frac{\partial M}{\partial t}\\
\leq &\frac{1}{1-\beta\frac{\rho^2}{2}}\bigg[\frac{\frac{\partial G}{\partial t}\sigma_{n-k}-(n-k)\eta(t)G\sigma_{n-k}+G^2\sigma_{n-k}d_{ij}\delta_{ij}}{h}-\frac{(G\sigma_{n-k})^2}{h^2}+\frac{G\sigma_{n-k}\eta(t)}{h}\\
&+Gd_{ij}\frac{G\sigma_{n-k}\nabla_{ij}h}{h^2}-2Gd_{ij}\frac{[G\sigma_{n-k}]\nabla_ih\nabla_jh}{h^3}\bigg]\\
&+\frac{\beta}{\bigg(1-\beta\frac{\rho^2}{2}\bigg)^2}\frac{G\sigma_{n-k}}{h}\bigg[(n+1-k)hG\sigma_{n-k}-\eta(t)\rho^2+\sigma_{n-k}\nabla_mh\nabla_mG-Gd_{ij}\omega_{mi}\omega_{mj}\bigg]\\
=&\frac{1}{1-\beta\frac{\rho^2}{2}}\bigg\{\frac{\sigma_{n-k}}{f(x)}\bigg[(p+2-n)\rho^{p-n}|Du|^{k+1}(G\sigma_{n-k}-\eta(t)h)\\
&-(k\!+\!1)h\!\rho^{p-n}\!|Du|^k\bigg(\!(h_t)_i\langle (D^2u)x,e_i\rangle\!+\!\bigg(\langle(D^2u)x,x\rangle \!+\!(\langle |Du|^{-1}Du D^2u,x\rangle)\!+\!|Du|\bigg)h_t\!\bigg)\!\bigg]\\
&-\frac{(n-k)\eta(t)G\sigma_{n-k}}{h}+\frac{G^2\sigma_{n-k}d_{ij}\delta_{ij}}{h}-\frac{(G\sigma_{n-k})^2}{h^2}+\frac{G\sigma_{n-k}\eta(t)}{h}\\
&+Gd_{ij}\frac{G\sigma_{n-k}(\omega_{ij}-h\delta_{ij})}{h^2}-2Gd_{ij}\frac{[G\sigma_{n-k}]\nabla_ih\nabla_jh}{h^3}\bigg\}\\
&+\frac{\beta}{\bigg(1-\beta\frac{\rho^2}{2}\bigg)^2}\frac{G\sigma_{n-k}}{h}\bigg[(n+1-k)hG\sigma_{n-k}-\eta(t)\rho^2+\sigma_{n-k}\nabla_mh\nabla_mG-Gd_{ij}\omega_{mi}\omega_{mj}\bigg]\\
\leq&\frac{1}{1-\beta\frac{\rho^2}{2}}\bigg\{\frac{F}{Gf(x)}\bigg[(p+2-n)\rho^{p-n}|Du|^{k+1}(F-\eta(t)h)\\
&-(k+1)h\rho^{p-n}|Du|^k\bigg((G\sigma_{n-k}-\eta(t)h)_i\langle (D^2u)x,e_i\rangle\\
&+\bigg(\langle(D^2u)x,x\rangle +(\langle |Du|^{-1}Du D^2u,x\rangle)+|Du|\bigg)F-\eta(t)h\bigg)\bigg]\\
&-\frac{(n-k)\eta(t)F}{h}+\frac{FGd_{ij}\delta_{ij}}{h}-\frac{F^2}{h^2}+\frac{F\eta(t)}{h}\\
&+\frac{(n-k)FG}{h^2}-2Gd_{ij}\frac{F\nabla_ih\nabla_jh}{h^3}\bigg\}\\
&+\frac{\beta}{\bigg(1-\beta\frac{\rho^2}{2}\bigg)^2}\bigg[(n+1-k)F^2+\frac{F^2}{hG}\nabla_mh\nabla_mG-\frac{(n-k)FG}{h}\bigg(\frac{F}{G}\bigg)^{1+\frac{1}{n-k}}\bigg]\\
\leq&\frac{1}{1-\beta\frac{\rho^2}{2}}\bigg\{\frac{F}{Gf(x)}\bigg((p+2-n)\rho^{p-n}F|Du|^{k+1}+(2|D^2u|+|Du|)F\bigg)\\
&+\frac{FGd_{ii}}{h}+\frac{F\eta(t)}{h}+\frac{(n-k)FG}{h^2}\bigg\}\\
&+\frac{\beta}{\bigg(1-\beta\frac{\rho^2}{2}\bigg)^2}\bigg[(n+1-k)F^2+\frac{F^2}{hG}\nabla_mh\nabla_mG-\frac{n-k}{hG^{\frac{1}{n-k}}}F^{2+\frac{1}{n-k}}\bigg]\\
=&\bigg(Gd_{ii}+\eta(t)+\frac{(n-k)G}{h}\bigg)\bigg[\frac{1}{1-\beta\frac{\rho^2}{2}}\frac{F}{h}\bigg]\\
&+\bigg\{\frac{h(1-\beta\frac{\rho^2}{2})}{Gf(x)}\bigg(\!(p+2-n)\rho^{p-n}|Du|^{k+1}+2|D^2u|+|Du|\bigg)\\
&+\beta h^2\bigg((n+1-k)+\frac{\nabla_mh\nabla_mG}{hG}\!\bigg)\bigg\}\bigg[\frac{1}{1-\beta\frac{\rho^2}{2}}\frac{F}{h}\!\bigg]^2\\
&-\beta h\bigg(1-\beta\frac{\rho^2}{2}\bigg)^\frac{1}{n-k}(n-k)\bigg(\frac{h}{G}\bigg)^{\frac{1}{n-k}}\bigg[\frac{1}{1-\beta\frac{\rho^2}{2}}\frac{F}{h}\bigg]^{2+\frac{1}{n-k}}.
\end{align*}
Here
\begin{align*}
d_{ij}=\frac{\partial\sigma_{n-k}(\omega_{ij}(x))}{\partial\omega_{ij}}=\sum_{p=1}^{n}\frac{\partial\sigma_{n-k}(\omega_{ij}(x))}{\partial\lambda_p}\frac{\partial\lambda_p}{\partial \omega_{ij}}=\sum_{p=1}^{n}\sigma_{k-1}^{(p)}(\omega_{ij})v_p^iv_p^j,
\end{align*}
since $\Omega_t$ is a smooth strictly convex body with uniform bound, it's not difficult to see that $d_{ii}$ has uniform upper bound. Taking
\begin{align*}
P_1=&Gd_{ii}+\eta(t)+\frac{(n-k)G}{h}\leq C_2,\\
P_2=&\frac{h(1-\beta\frac{\rho^2}{2})}{Gf(x)}\bigg(\!(p+2-n)\rho^{p-n}|Du|^{k+1}+2|D^2u|+|Du|\bigg)\\
&+\beta h^2\bigg((n+1-k)+\frac{\nabla_mh\nabla_mG}{hG}\!\bigg)\leq C_3,\\
P_3=&\beta h\bigg(1-\beta\frac{\rho^2}{2}\bigg)^\frac{1}{n-k}(n-k)\bigg(\frac{h}{G}\bigg)^{\frac{1}{n-k}}\leq C_4.
\end{align*}
Thus at $x_1$, there exists some positive constants $C_2$, $C_3$ and $C_4$ independent of $t$ such that
\begin{align*}
\frac{\partial M}{\partial t}\leq C_1M+C_2M^2-C_3M^{2+\frac{1}{n-k}}<0
\end{align*}
provided $M$ is sufficiently large. Thus $M(x,t)$ is uniformly bounded from above, from this we can get the uniformly upper bound of $\sigma_{n-k}$ which is depends on $f$ and $n$.
\end{proof}

From \cite{UR}, we know that the eigenvalues of $\{\omega_{ij}\}$ and $\{\omega^{ij}\}$ are respectively the principal radii and principal curvatures
of $\Omega_t$, where $\{\omega^{ij}\}$ is the inverse matrix of $\{\omega_{ij}\}$. Therefore to derive a positive
upper bound of principal curvatures of $\Omega_t$ at $X(x,t)$, it is equivalent to estimate the upper bound of the eigenvalues of $\{\omega^{ij}\}$.

\begin{lemma}\label{lem57} Let $p< n-2$, under the conditions of Lemma \ref{lem52}, there exists a positive constant $C$ independent of $t$ such that
\begin{align*}
\frac{1}{C}\leq\kappa_i(\cdot,t)\leq C,\qquad i=1,\cdots,n-1.
\end{align*}
\end{lemma}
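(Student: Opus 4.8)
The plan is to derive a uniform two-sided $C^2$ bound for $h(\cdot,t)$ by combining a maximum-principle argument with the pinching of $\sigma_{n-k}$ already obtained in Lemmas \ref{lem55} and \ref{lem56}. Since the eigenvalues of $\{\omega_{ij}\}=\{h_{ij}+h\delta_{ij}\}$ are the principal radii of curvature of $\partial\Omega_t$ and the $\kappa_i$ are their reciprocals, the assertion is equivalent to a uniform bound $\tfrac1C\le\lambda_i\le C$ for the eigenvalues $\lambda_1,\dots,\lambda_{n-1}$ of $\{\omega_{ij}\}$; the upper bound on the $\lambda_i$ is the lower bound on the $\kappa_i$, and the lower bound on the $\lambda_i$ is exactly the uniform parabolicity of (\ref{eq402}). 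Throughout I would use, as fixed background data, the estimates already established along the flow: $\tfrac1C\le h,\rho\le C$ (Lemma \ref{lem52}), $|\nabla h|\le C$ (Lemma \ref{lem53}), $\tfrac1C\le\eta(t)\le C$ and $c\le|Du|\le C$ (Lemma \ref{lem54}), and $|D^2u|\le\widehat C$ from the Schauder estimate for (\ref{eq101}) exactly as in the proof of Lemma \ref{lem55}; the hypothesis $p<n-2$ enters through the sign of $p+2-n$ just as there.

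\emph{Main step: the lower bound $\lambda_i\ge\tfrac1C$, i.e. $\kappa_i\le C$.} Following the scheme used for Lemmas \ref{lem55}--\ref{lem56}, I would study an auxiliary function of the form
\[
W(x,t)=\log\lambda_{\max}\{\omega^{ij}(x,t)\}+\Psi(h(x,t),\rho(v,t)),
\]
where $\{\omega^{ij}\}$ is the Weingarten (inverse) matrix and $\Psi$ is a combination of $h$ and $\rho^2$ with large coefficients to be chosen. At a spatial maximum point $(x_0,t)$ of $W$ one may, by Hamilton's perturbation trick, replace $\log\lambda_{\max}\{\omega^{ij}\}$ by a smooth function agreeing with it there, and assume $\{\omega_{ij}\}$ is diagonal at $(x_0,t)$ with $\omega_{11}$ its smallest diagonal entry. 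Using (\ref{eq402}) together with $\partial_t\omega_{ij}=\nabla_{ij}(F-\eta h)+(F-\eta h)\delta_{ij}$, where $F=G\sigma_{n-k}$, differentiating the identity $\sigma_{n-k}(\omega_{ij})=F/G$ once and twice, commuting covariant derivatives on $S^{n-1}$ via the Ricci identity, and using the concavity of $\sigma_{n-k}^{1/(n-k)}$ on $\Gamma_{n-k}$ to absorb the dangerous third-order and quadratic curvature terms, one should obtain a differential inequality which, at the maximum point and whenever $\lambda_{\max}\{\omega^{ij}\}(x_0,t)$ is large, forces $\partial_tW(x_0,t)\le\partial_tW-Gd_{ij}\nabla_{ij}W<0$, provided $\Psi$ is chosen with coefficients large enough to dominate the contributions of $\nabla\Psi$, $\nabla^2\Psi$ and of the $u$-dependent pieces of $F$ (namely $\nabla|Du|$, $D^2u$, $\dot u$ and their derivatives, all bounded by the data above). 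The maximum principle then gives $W\le C$, hence $\lambda_{\max}\{\omega^{ij}\}\le C$, i.e. $\kappa_i\le C$ and $\lambda_i\ge\tfrac1C$.

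\emph{Reverse bound: the upper bound $\lambda_i\le C$, i.e. $\kappa_i\ge\tfrac1C$.} This is now elementary. With $\lambda_j\ge\tfrac1C$ for all $j$ from the main step, for each fixed $j$ the cofactor satisfies $\partial\sigma_{n-k}/\partial\lambda_j=\sigma_{n-k-1}(\lambda_1,\dots,\widehat{\lambda_j},\dots,\lambda_{n-1})\ge c_0:=\binom{n-2}{n-k-1}C^{-(n-k-1)}>0$, and since $\sigma_{n-k}(\lambda)=\lambda_j\,\sigma_{n-k-1}(\widehat{\lambda_j})+\sigma_{n-k}(\widehat{\lambda_j})\ge c_0\lambda_j$ while $\sigma_{n-k}=\sigma_{n-k}(\lambda)\le C_1$ by Lemma \ref{lem56}, we get $\lambda_j\le C_1/c_0$ for every $j$, that is $\kappa_j\ge c_0/C_1$. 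Combining the two estimates yields $\tfrac1C\le\kappa_i\le C$, so in particular (\ref{eq402}) is uniformly parabolic.

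I expect the genuine difficulty to lie entirely in the main step, and precisely in the coupling with the auxiliary $k$-Hessian equation (\ref{eq101}) on the moving domain: differentiating $\log\lambda_{\max}\{\omega^{ij}\}$ along the flow produces first and second spatial derivatives of $|Du|$ and of $D^2u$, as well as terms in $\dot u$, so one must verify that the interior elliptic regularity for (\ref{eq101}) --- which in the proof of Lemma \ref{lem55} already yields uniform bounds on $|Du|$ and $|D^2u|$ from the purely geometric estimates of Lemmas \ref{lem52}--\ref{lem54} --- can be bootstrapped to the uniform bounds on $\nabla|Du|$, $\nabla^2|Du|$ and $\dot u$ needed here, and that every such term enters the inequality for $W$ only as a lower-order, bounded contribution that the large coefficients in $\Psi$ can swallow. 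The non-smoothness of $\lambda_{\max}$ at eigenvalue multiplicities and the invocation of concavity of $\sigma_{n-k}^{1/(n-k)}$ are handled in the standard way and are not the essential obstacle.
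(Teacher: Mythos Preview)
Your two–step architecture --- a maximum-principle argument to bound $\kappa_{\max}$ (equivalently, the largest eigenvalue of $\{\omega^{ij}\}$), followed by the elementary algebraic bound $\lambda_{\max}\le C_1/c_0$ from $\sigma_{n-k}\le C_1$ and the lower bound on the other $\lambda_j$ --- is exactly the paper's. The second step matches the paper's argument verbatim.

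For the first step, the paper works with the quotient $\omega^{11}/h$ rather than your $\log\lambda_{\max}\{\omega^{ij}\}+\Psi$, but that is cosmetic. The substantive difference is in how the dangerous term $\nabla_{11}G$ (arising from $\partial_t\omega_{11}=\nabla_{11}F+\cdots$ with $F=G\sigma_{n-k}$) is disposed of. The paper does \emph{not} try to bound the pieces of $\nabla_{11}G$ individually. Instead it combines two structural inequalities: the inverse-concavity estimate of Andrews,
\[
(d_{ij,mn}+2d_{im}\omega^{nj})\nabla_1\omega_{ij}\nabla_1\omega_{mn}\ \ge\ \tfrac{n+1-k}{n-k}\,\sigma_{n-k}^{-1}(\nabla_1\sigma_{n-k})^2,
\]
which is sharper than the plain concavity of $\sigma_{n-k}^{1/(n-k)}$ you invoke and is what actually controls the third-order terms here; and, crucially, the Guan--Ma type convexity
\[
\nabla_{ij}G^{\frac{1}{n+1-k}}+G^{\frac{1}{n+1-k}}\delta_{ij}>0,
\]
which after expanding is exactly $\nabla_{11}G-\tfrac{n-k}{n+1-k}\tfrac{(\nabla_1G)^2}{G}+(n+1-k)G>0$. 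This packages the entire second derivative of $G$ --- including all the $|Du|$-dependence --- into a single favorable sign and produces the dominant $-c_0(\omega^{11})^2$ term without ever needing bounds on $\nabla|Du|$ or $\nabla^2|Du|$.

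Your proposed alternative, to obtain such bounds by ``bootstrapping'' the interior regularity of the $k$-Hessian equation, has a genuine gap: boundary $C^{2,\alpha}$ (and higher) estimates for the solution $u$ of $S_k(D^2u)=1$ on $\Omega_t$ depend on the principal curvatures of $\partial\Omega_t$, which is precisely what Lemma~\ref{lem57} is meant to establish, so the bootstrap is circular. Nor can these contributions be treated as bounded lower-order terms to be swallowed by a large $\Psi$: via the chain rule one has $\nabla_i|Du|(\overline\nabla h)=|Du|^{-1}\langle (D^2u)\,\omega_{ij}e_j,\,Du\rangle$, so $\nabla G$ and $\nabla^2 G$ already carry factors of $\omega_{ij}$ and $\nabla\omega_{ij}$ and are not a priori uniformly bounded. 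The paper's use of the convexity of $G^{1/(n+1-k)}$ is exactly the device that sidesteps this coupling, and it is the ingredient your plan is missing.
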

\begin{proof} For any fixed $t\in[0,T)$, we suppose that the spatial maximum of eigenvalue of matrix $\{\frac{\omega^{ij}}{h}\}$ attained at a point $x_2$ in the direction of the unit vector $e_1\in T_{x_2}S^{n-1}$. By rotation, we also choose the orthonormal vector field such that $\omega_{ij}$ is diagonal and the maximum eigenvalue of $\{\frac{\omega^{ij}}{h}\}$ is $\frac{\omega^{11}}{h}$.

Firstly, we calculate the evolution equation of $\omega_{ij}$ and $\omega^{ij}$. For convenience, we set $G=\frac{h^2\rho^{p-n}|Du|^{k+1}}{f(x)}$, then $h_t=G\sigma_{n-k}-\eta(t)h$. Since $\omega_{ij}=\nabla_{ij}h+h\delta_{ij}$, we obtain
\begin{align*}
\frac{\partial \omega_{ij}}{\partial t}=&\nabla_{ij}(h_t)+h_t\delta_{ij}\\
=&\nabla_{ij}[G\sigma_{n-k}-\eta(t)h]+\bigg(G\sigma_{n-k}-\eta(t)h\bigg)\delta_{ij}\\
=&\sigma_{n-k}\nabla_{ij}G+\nabla_iG\nabla_j\sigma_{n-k}+\nabla_i\sigma_{n-k}\nabla_jG+G\nabla_{ij}\sigma_{n-k}+G\sigma_{n-k}\delta_{ij}-\eta(t)\omega_{ij},
\end{align*}
where
\begin{align*}
\nabla_i\sigma_{n-k}=d_{mn}\nabla_i(\omega_{mn}),
\end{align*}
and
\begin{align*}
\nabla_{ij}\sigma_{n-k}=d_{mn,ls}\nabla_j(\omega_{ls})\nabla_i(\omega_{mn})+d_{mn}\nabla_{ij}(\omega_{mn}).
\end{align*}
By the Codazzi equation and the Ricci identity, we have
\begin{align*}
d_{mn}\nabla_{ij}(\omega_{mn})=&d_{mn}\nabla_{nj}(\omega_{mi})\\
=&d_{mn}\nabla_{jn}(\omega_{mi})+d_{mn}\omega_{pm}\nabla_{nj}R_{pi}+d_{mn}\delta_{pi}\nabla_{nj}R_{pm}\\
=&d_{mn}\nabla_{mn}\omega_{ij}+d_{mn}\omega_{mn}\delta_{ij}-d_{mn}\omega_{jm}\delta_{in}+d_{mn}\omega_{in}\delta_{mj}-d_{mn}\omega_{ij}\delta_{mn}\\
=&d_{mn}\nabla_{mn}\omega_{ij}+(n-k)\sigma_{n-k}\delta_{ij}-d_{mn}\delta_{mn}\omega_{ij}.
\end{align*}
Then
\begin{align*}
&\frac{\partial \omega_{ij}}{\partial t}\\
=&\sigma_{n-k}\nabla_{ij}G+\nabla_iG\nabla_j\sigma_{n-k}+\nabla_i\sigma_{n-k}\nabla_jG+(n+1-k)G\sigma_{n-k}\delta_{ij}-\eta(t)\omega_{ij}\\
&+G\bigg(d_{mn,ls}\nabla_j(\omega_{ls})\nabla_i(\omega_{mn})+d_{mn}\nabla_{mn}\omega_{ij}-d_{mn}\delta_{mn}\omega_{ij}\bigg).
\end{align*}
Hence
\begin{align}\label{eq516}
&\frac{\partial \omega_{ij}}{\partial t}-Gd_{mn}\nabla_{mn}\omega_{ij}\\
\nonumber=&\sigma_{n-k}\nabla_{ij}G+\nabla_iG\nabla_j\sigma_{n-k}+\nabla_i\sigma_{n-k}\nabla_jG+(n+1-k)G\sigma_{n-k}\delta_{ij}-\eta(t)\omega_{ij}\\
\nonumber&+G\bigg(d_{mn,ls}\nabla_j(\omega_{ls})\nabla_i(\omega_{mn})-d_{mn}\delta_{mn}\omega_{ij}\bigg).
\end{align}

Since $\frac{\partial \omega^{ij}}{\partial t}=-(\omega^{ij})^2\frac{\partial \omega_{ij}}{\partial t}$ and $\nabla_{mn}\omega^{ij}=2(\omega^{ij})^3\nabla_m\omega_{ij}\nabla_n\omega_{ij}-(\omega^{ij})^2\nabla_{mn}\omega_{ij}$, thus there is the following evolution equation by (\ref{eq516}),
\begin{align}\label{eq517}
&\frac{\partial \omega^{ij}}{\partial t}-Gd_{mn}\nabla_{mn}\omega^{ij}\\
\nonumber=&-(\omega^{ij})^2\sigma_{n-k}\nabla_{ij}G-(\omega^{ij})^2\nabla_iG\nabla_j\sigma_{n-k}
-(\omega^{ij})^2\nabla_i\sigma_{n-k}\nabla_jG\\
\nonumber&-(n+1-k)(\omega^{ij})^2G\sigma_{n-k}\delta_{ij}+\eta(t)\omega^{ij}\\
\nonumber&-G(\omega^{ij})^2\bigg(d_{mn,ls}\nabla_j(\omega_{ls})\nabla_i(\omega_{mn})-d_{mn}\delta_{mn}\omega_{ij}\bigg)-2Gd_{mn}(\omega^{ij})^3\nabla_m\omega_{ij}\nabla_n\omega_{ij}.
\end{align}

At $x_2$, we get
\begin{align}\label{eq518}
\nabla_i\frac{\omega^{11}}{h}=0,\quad \text{i.e.},\quad \omega^{11}\nabla_i\omega_{11}=-\frac{\nabla_ih}{h},
\end{align}
\begin{align*}
\nabla_{ij}\omega_{11}=&\frac{\omega_{11}\nabla_ih \nabla_jh}{h^2}-\frac{\omega_{11}\nabla_{ij}h+\nabla_ih\nabla_j\omega_{11}}{h}\\
=&2\frac{\omega_{11}\nabla_ih \nabla_jh}{h^2}-\frac{\omega_{11}\nabla_{ij}h}{h}.
\end{align*}
And
\begin{align}\label{eq519}
\nabla_{ij}\frac{\omega^{11}}{h}\leq0.
\end{align}
Now, from (\ref{eq516}) and (\ref{eq519}), we compute the following evolution equation as
\begin{align}\label{eq520}
&\frac{\partial(\frac{\omega^{11}}{h})}{\partial t}\leq\frac{\partial(\frac{\omega^{11}}{h})}{\partial t}-Gd_{ij}\nabla_{ij}\frac{\omega^{11}}{h}\\
\nonumber=&\frac{\frac{\partial \omega^{11}}{\partial t}}{h}-\frac{\omega^{11}h_t}{h^2}-Gd_{ij}\bigg(\frac{2(\omega^{11})^3\nabla_i\omega_{11}\nabla_j\omega_{11}-(\omega^{11})^2\nabla_{ij}\omega_{11}}{h}\\
\nonumber&+\frac{(\omega^{11})^2\nabla_i\omega_{11}\nabla_jh}{h^2}+\frac{(\omega^{11})^2\nabla_j\omega_{11}\nabla_ih-\omega^{11}\nabla_{ij}h}{h^2}
\nonumber+\frac{2\omega^{11}\nabla_ih\nabla_jh}{h^3}\bigg)\\
\nonumber=&\frac{-(\omega^{11})^2\nabla_{ij}G\sigma_{n-k}-(\omega^{11})^2\nabla_iG\nabla_j\sigma_{n-k}-(\omega^{11})^2\nabla_jG\nabla_i\sigma_{n-k}}{h}\\
\nonumber&-\frac{(n+1-k)(\omega^{11})^2G\sigma_{n-k}\delta_{ij}-\eta(t)\omega^{11}}{h}\\
\nonumber&-\frac{G(\omega^{11})^2\bigg(d_{ij,ls}\nabla_j(\omega_{ls})\nabla_i(\omega_{11})+d_{ij}\nabla_{ij}\omega_{11}-d_{ij}\delta_{ij}\omega_{11}\bigg)}{h}-\frac{\omega^{11}h_t}{h^2}\\
\nonumber&-Gd_{ij}\bigg(\frac{2(\omega^{11})^3\nabla_i\omega_{11}\nabla_j\omega_{11}-(\omega^{11})^2\nabla_{ij}\omega_{11}}{h}+\frac{(\omega^{11})^2\nabla_i\omega_{11}\nabla_jh}{h^2}\\
\nonumber&+\frac{(\omega^{11})^2\nabla_j\omega_{11}\nabla_ih-\omega^{11}\nabla_{ij}h}{h^2}+\frac{2\omega^{11}\nabla_ih\nabla_jh}{h^3}\bigg)\\
\nonumber=&\frac{-(\omega^{11})^2\nabla_{ij}G\sigma_{n-k}}{h}-\frac{(\omega^{11})^2(\nabla_iG\nabla_j\sigma_{n-k}+\nabla_jG\nabla_i\sigma_{n-k})}{h}\\
\nonumber&-\frac{(n+1-k)(\omega^{11})^2G\sigma_{n-k}\delta_{ij}-\eta(t)\omega^{11}}{h}\\
\nonumber&-\frac{G(\omega^{11})^2\bigg(d_{ij,ls}\nabla_j(\omega_{ls})\nabla_i(\omega_{11})+2d_{ij}\omega^{11}\nabla_i\omega_{11}\nabla_j\omega_{11}\bigg)}{h}\\
\nonumber&+G(\omega^{11})^2\frac{d_{ij}\delta_{ij}\omega_{11}}{h}-\frac{\omega^{11}h_t}{h^2}-Gd_{ij}\bigg(\frac{(\omega^{11})^2\nabla_i\omega_{11}\nabla_jh}{h^2}\\
\nonumber&+\frac{(\omega^{11})^2\nabla_j\omega_{11}\nabla_ih-\omega^{11}\nabla_{ij}h}{h^2}+\frac{2\omega^{11}\nabla_ih\nabla_jh}{h^3}\bigg).
\end{align}

By the reverse concavity of $(\sigma_{n-k})^{\frac{1}{n-k}}$ in \cite{AB0}, we have
\begin{align}\label{eq521}
(d_{ij,mn}+2d_{im}\omega^{nj})\nabla_1\omega_{ij}\nabla_1\omega_{mn}\geq \frac{n+1-k}{n-k}\frac{(\nabla_1\sigma_{n-k})^2}{\sigma_{n-k}}.
\end{align}
Moreover, according to Schwartz inequality, the following result is true,
\begin{align}\label{eq522}
2|\nabla_1\sigma_{n-k}\nabla_1G|\leq\frac{n+1-k}{n-k}\frac{G(\nabla_1\sigma_{n-k})^2}{\sigma_{n-k}}+\frac{n-k}{n+1-k}\frac{\sigma_{n-k}(\nabla_1G)^2}{G}.
\end{align}
Thus at point $x_2$, substituting (\ref{eq518}), (\ref{eq521}) and (\ref{eq522}) into (\ref{eq520}), we get
\begin{align}\label{eq523}
\nonumber&\frac{\partial(\frac{\omega^{11}}{h})}{\partial t}-G\sigma_{n-k}d_{ij}\nabla_{ij}\frac{\omega^{11}}{h}\\
\nonumber\leq&-\frac{(\omega^{11})^2\nabla_{ij}G\sigma_{n-k}}{h}+\frac{(\omega^{11})^2}{h}\bigg(\frac{n+1-k}{n-k}\frac{G(\nabla_1\sigma_{n-k})^2}{\sigma_{n-k}}
+\frac{n-k}{n+1-k}\frac{\sigma_{n-k}(\nabla_1G)^2}{G}\bigg)\\
\nonumber&-\frac{(n+1-k)(\omega^{11})^2G\sigma_{n-k}\delta_{ij}}{h}+2\frac{\eta(t)\omega^{11}}{h}\\
\nonumber&-\frac{G(\omega^{11})^2}{h}\frac{n+1-k}{n-k}\frac{(\nabla_1\sigma_{n-k})^2}{\sigma_{n-k}}+\frac{(n-k)G\sigma_{n-k}(\omega^{11})^2}{h}\\
\nonumber&-Gd_{ij}\bigg(\frac{(\omega^{11})^2\nabla_i\omega_{11}\nabla_jh}{h^2}+\frac{(\omega^{11})^2\nabla_j\omega_{11}\nabla_ih-\omega^{11}\nabla_{ij}h}{h^2}+\frac{2\omega^{11}\nabla_ih\nabla_jh}{h^3}\bigg)\\
\nonumber\leq&-\frac{(\omega^{11})^2\nabla_{ij}G\sigma_{n-k}}{h}+\frac{(\omega^{11})^2\sigma_{n-k}}{h}\bigg(\frac{n-k}{n+1-k}\frac{(\nabla_1G)^2}{G}\bigg)\\
\nonumber&-\frac{(n+1-k)(\omega^{11})^2G\sigma_{n-k}\delta_{ij}}{h}+2\frac{\eta(t)\omega^{11}}{h}\\
\nonumber&+\frac{(n-k)G\sigma_{n-k}(\omega^{11})^2}{h}+(n-k)G\sigma_{n-k}(\omega^{11})^2\bigg(\frac{\omega_{11}-h}{h^2}\bigg)\\
\leq&-\frac{(\omega^{11})^2}{h}\bigg[\nabla_{11}G\sigma_{n-k}-\frac{n-k}{n+1-k}\sigma_{n-k}\frac{(\nabla_1G)^2}{G}+(n+1-k)G\sigma_{n-k}\bigg]\\
\nonumber&+2\frac{\eta(t)\omega^{11}}{h}-(k-n-1)\frac{\omega^{11}}{h^2}G\sigma_{n-k}.
\end{align}
By using (see \cite{ZR})
\begin{align*}
\nabla_i\nabla_jG^{\frac{1}{n+1-k}}+G^{\frac{1}{n+1-k}}\delta_{ij}>0,
\end{align*}
we have
\begin{align}\label{eq524}
\frac{1}{n+1-k}\nabla_1\nabla_1G+\frac{1}{n+1-k}\bigg(\frac{1}{n+1-k}-1\bigg)\frac{(\nabla_1G)^2}{G}+G>0.
\end{align}
Inserting (\ref{eq524}) into (\ref{eq523}), by the uniform bounds on $f$, $h$, $\lambda(t)$, $|Du|$ and $\sigma_{n-k}$, we conclude there exists $c_0,c>0$ such that
\begin{align*}
\frac{\partial(\frac{\omega^{11}}{h})}{\partial t}-Gd_{ij}\nabla_{ij}\frac{\omega^{11}}{h}\leq -c_0\frac{(\omega^{11})^2}{h}+c\frac{\omega^{11}}{h}.
\end{align*}

Therefore $\omega^{11}(x,t)$ has a uniform upper bound, which means that the principal radii are bounded from below by a positive constant $c_1$. In addition, from Lemma \ref{lem56}, we know that for minimal eigenvalue $\lambda_{\min}=\frac{1}{\kappa_{\max}}$ of $\frac{\omega^{11}(x,t)}{h}$ at point $x_2$,
\begin{align*}
C_1\geq\sigma_{n-k}=&\lambda_{\max}\sigma_{n-k-1}(\lambda|\lambda_{\max})+\sigma_{n-k}(\lambda|\lambda_{\max})\\
\geq &\lambda_{\max}\sigma_{n-k-1}(\lambda|\lambda_{\max})\\
\geq &C^{n-k-1}_{n-1}\lambda_{\min}^{n-k-1}\lambda_{\max}\\
\geq &C^{n-k-1}_{n-1}c_1^{n-k-1}\lambda_{\max}
\end{align*}
for constant $C$. Consequently, the principal radii of curvature has uniform upper and lower bounds. This completes the proof of Lemma \ref{lem57}.
\end{proof}

\vskip 0pt
\section{\bf The convergence of the flow}\label{sec6}

With the help of priori estimates in the section \ref{sec5}, the long-time existence and asymptotic behaviour of flow (\ref{eq107}) are obtained, we also complete the proof of Theorem \ref{thm13}.
\begin{proof}[Proof of Theorem \ref{thm13}] Since equation (\ref{eq402}) is parabolic, we can get its short time existence. Let $T$ be the maximal time such that $h(\cdot, t)$ is a smooth strictly convex solution to equation (\ref{eq402}) for all $t\in[0,T)$. Lemmas \ref{lem52}-\ref{lem56} enable us to apply Lemma \ref{lem57} to equation (\ref{eq402}), thus we can deduce an uniformly upper bound and an uniformly lower bound for the biggest eigenvalue of $\{(h_{ij}+h\delta_{ij})(x,t)\}$. This implies
\begin{align*}
C^{-1}I\leq (h_{ij}+h\delta_{ij})(x,t)\leq CI,\quad \forall (x,t)\in S^{n-1}\times [0,T),
\end{align*}
where $C>0$ is independent of $t$. This shows that equation (\ref{eq402}) is uniformly parabolic. Estimates for the higher derivatives follow from the standard regularity theory of uniformly parabolic equations
Krylov \cite{KR}. Hence we obtain the long time existence and regularity of solutions for the flow
(\ref{eq107}). Moreover, we obtain
\begin{align}\label{eq601}
\|h\|_{C^{l,m}_{x,t}(S^{n-1}\times [0,T))}\leq C_{l,m},
\end{align}
where $C_{l,m}$ ($l, m$ are nonnegative integers pairs) are independent of $t$, then $T=\infty$. Using the parabolic comparison principle, we can attain the uniqueness of smooth non-even solutions $h(\cdot,t)$ of equation (\ref{eq402}).

From the property of non-decreasing of $\widetilde{Q}_{k,{n-p}}(\Omega_t)$ in Lemma \ref{lem41}, we know that

\begin{align}\label{eq602}
\frac{\partial\widetilde{Q}_{k,{n-p}}(\Omega_t)}{\partial t}\geq 0.
\end{align}
Based on (\ref{eq602}), there exists a $t_0$ such that
\begin{align*}
\frac{\partial\widetilde{Q}_{k,{n-p}}(\Omega_t)}{\partial t}\bigg|_{t=t_0}=0,
\end{align*}
this yields
\begin{align*}
\tau (h^2+|\nabla h|^2)^{\frac{p-n}{2}}h|Du|^{k+1}\sigma_{n-k}=f.
\end{align*}
Let $\Omega=\Omega_{t_0}$, thus the support function of $\Omega$ satisfies equation (\ref{eq106}).

In view of (\ref{eq601}), applying the Arzel$\grave{a}$-Ascoli theorem \cite{BRE} and a diagonal argument, we can extract
a subsequence of $t$, it is denoted by $\{t_j\}_{j \in \mathbb{N}}\subset (0,+\infty)$, and there exists a smooth function $\bar{h}(x)$ such that
\begin{align}\label{eq603}
\|h(x,t_j)-\bar{h}(x)\|_{C^l(S^{n-1})}\rightarrow 0,
\end{align}
uniformly for each nonnegative integer $l$ as $t_j \rightarrow \infty$. This reveals that $\bar{h}(x)$ is a support function. Let us denote by $\Omega$ the convex body determined by $\bar{h}(x)$. Thus $\Omega$ is a smooth strictly convex body containing the origin in its interior point.

Moreover, by (\ref{eq601}) and the uniform estimates in Section \ref{sec5}, we conclude that $\widetilde{Q}_{k,{n-p}}(\Omega_t)$ is a bounded function in $t$ and $\frac{\partial \widetilde{Q}_{k,{n-p}}(\Omega_t)}{\partial t}$ is uniformly continuous. Thus for any $t>0$, by the monotonicity of $\widetilde{Q}_{k,{n-p}}(\Omega_t)$ in Lemma \ref{lem41}, there is a constant $C>0$ independent of $t$, such that
\begin{align*}
\int_0^t\bigg(\frac{\partial\widetilde{Q}_{k,{n-p}}(\Omega_t)}{\partial t}\bigg)dt=\widetilde{Q}_{k,{n-p}}(\Omega_t)-\widetilde{Q}_{k,{n-p}}(\Omega_0))\leq C,
\end{align*}
this gives
\begin{align}\label{eq604}
\lim_{t\rightarrow\infty}\widetilde{Q}_{k,{n-p}}(\Omega_t)-\widetilde{Q}_{k,{n-p}}(\Omega_0)=
\int_0^\infty\frac{\partial}{\partial t}\widetilde{Q}_{k,{n-p}}(\Omega_t)dt\leq C.
\end{align}
The left hand side of (\ref{eq604}) is bounded, therefore there is a subsequence $t_j\rightarrow\infty$ such that
\begin{align*}
\frac{\partial}{\partial t}\widetilde{Q}_{k,{n-p}}(\Omega_{t_j})\rightarrow 0 \quad\text{as}\quad  t_j\rightarrow\infty.
\end{align*}
The proof of Lemma \ref{lem41} shows that
\begin{align}\label{eq605}
&\frac{\partial\widetilde{Q}_{k,{n-p}}(\Omega_t)}{\partial t}\bigg|_{t=t_j}\\
\nonumber=&\frac{(p+1-k)(k+2)}{n-p}\bigg[\int_{S^{n-1}}\rho^{2p+1-n-k}\frac{h}{f(x)}|Du|^{2(k+1)}\sigma_{n-k}dv\\
\nonumber&-\frac{\int_{S^{n-1}}\rho(v,t)^{p+1-k}|Du|^{k+1}dv}{\int_{S^{n-1}}f(x)dx}\int_{S^{n-1}}\rho(v,t)^{p+1-k}|Du|^{k+1}dv\bigg]\geq0.
\end{align}
Taking the limit $t_j\rightarrow\infty$, by the equality condition of (\ref{eq605}), it means that there has
\begin{align*}
\tau[(h^\infty)^2+|\nabla h^\infty|^2]^{\frac{p-n}{2}}h^\infty|Du(X^\infty)|^{k+1}\sigma_{n-k}(h^\infty_{ij}+h^\infty\delta_{ij})=f(x),
\end{align*}
which satisfies (\ref{eq106}). From (\ref{eq601}) and the Arzel$\grave{\textrm{a}}$-Ascoli theorem, we know taht $h^\infty$ is
the support function and the convex body determined by $h^\infty$ is denoted as $\Omega^\infty$. Here $X^{\infty}=\overline{\nabla} h^{\infty}$ and $\frac{1}{\tau}=\lim_{t_j\rightarrow\infty}\eta(t_j)$. This completes the proof of Theorem \ref{thm13}.
\end{proof}


\begin{thebibliography}{11}

{\small

\bibitem{AB0} B. Andrews, J. McCoy and Y. Zheng, {\it Contracting convex hypersurfaces by curvature}, Calc. Var. Partial Differential Equations, {\bf 47} (2013), 611-665.

\bibitem{BE0} M. Belloni and B. Kawohl, {\it A direct uniquenesss proof for equations involving the $p$-Laplace
operator}, Manuscripta Math., {\bf 109} (2002), 229-231.

\bibitem{BE} C. Berg, {\it Corps convexes et potentiels sph$\acute{e}$riques}, Mat.-Fys. Medd. Danske Vid. Selsk., {\bf 37} (1969), 1-64.

\bibitem{BO} K. J. B\"{o}r\"{o}czky, E. Lutwak, D. Yang and G. Y. Zhang, {\it The logarithmic Minkowski problem}, J. Amer. Math. Soc., {\bf 26}
(2013), 831-852.

\bibitem{BR} B. Brandolini, C. Nitsch, P. Salani and C. Trombetti, {\it Serrin-type overdetermined problems: an alternative proof}, Arch. Rational Mech. Anal., {\bf 190} (2008), 267-280.

\bibitem{BRE} H. Brezis, {\it Functional Analysis, Sobolev spaces and partial differential equations}, Springer, 2010.

\bibitem{BP} P. Bryan, M. N. Ivaki and J. Scheue, {\it Orlicz-Minkowski flows}, Calc. Var. Partial Differential Equations, {\bf 60} (2021), 1-25.

\bibitem{CA} L. Caffarelli, L. Nirenerg and J. Spruck, {\it The Dirichlet problem for nonlinear second-order elliptic equations. III. Functions of the eigenvalues of the Hessian}, Acta Math., {\bf 155} (1985), 261-301.

\bibitem{CCQ} C. Q. Chen, Y. Huang and Y. M. Zhao, {\it Smooth solution to the $L_p$ dual Minkowski problem}, Math. Ann., {\bf 373} (2019), 953-976.

\bibitem{CH} H. D. Chen and Q. R. Li, {\it The $L_p$ dual Minkowski problem and related parabolic flows}, J. Funct. Anal., {\bf 281} (2021), 109139.

\bibitem{CZM} Z. M. Chen and Q. Y. Dai, {\it The $L_p$ Minkowski problem for torsion}, J. Math. Anal. Appl., {\bf 488} (2020), 124060.

\bibitem{CO1} T. H. Colding and W. P. Minicozzi, {\it The singular set of mean curvature flow with generic singularities}, Invent. math., {\bf 204} (2016), 443-471.

\bibitem{CO2} T. H. Colding and W. P. Minicozzi, {\it Generic mean curvature flow I; generic singularities}, Anna. Math., {\bf 175} (2012), 755-833.

\bibitem{CA0} A. Colesanti, {\it Brunn-Minkowski inequalities for variational functionals and related problems}, Adv. Math., {\bf 194} (2005), 105-140.

\bibitem{CA1} A. Colesanti and M. Fimiani, {\it The Minkowski problem for torsional rigidity}, Indiana Univ. Math. J., {\bf 59} (2010), 1013-1039.

\bibitem{CO} A. Colesanti, K. Nystr\"{o}m, P. Salani, J. Xiao, D. Yang and G. Y. Zhang, {\it The Hadamard variational formula and the Minkowski problem for $p$-capacity}, Adv. Math., {\bf 285} (2015), 1511-1588.

\bibitem{FI1} W. J. Firey, {\it Christoffel's problem for general convex bodies}, Mathematika, {\bf 15} (1968), 7-21.

\bibitem{FI2} W. J. Firey, {\it Shapes of worn stones}, Mathematika, {\bf 21} (1974), 1-11.

\bibitem{GI} D. Gilbarg and N. S. Trudinger, {\it Elliptic partial differential equations of second order}, Reprint of the 1998 Edition, Classics in Mathematics, Springer, Berlin, 2001.

\bibitem{GB} B. Guan and P. F. Guan, {\it Convex hypersurfaces of prescribed curvatures}, Ann. Math., {\bf 156} (2002), 655-673.

\bibitem{GM} P. F. Guan and X. N. Ma, {\it The Christoffel-Minkowski problem. I. Convexity of solutions of a Hessian equation}, Invent. Math., {\bf 151} (2003), 553-577.
\bibitem{GP} P. F. Guan and C. Xia, {\it $L_p$ Christoffel-Minkowski problem: the case $1<p<k+1$}, Calc. Var. Partial Differential Equations, {\bf 57} (2018), 1-23.

\bibitem{HC1} C. Haberl, E. Lutwak, D. Yang and G. Y. Zhang, {\it The even Orlicz Minkowski problem}, Adv. Math., {\bf 224} (2010), 2485-2510.

\bibitem{HC2} C. Haberl and F. E. Schuster, {\it Asymmetric affine $L_p$ Sobolev inequalities}, J. Funct. Anal., {\bf 257} (2009), 641-658.

\bibitem{HC} C. Q. Hu, X. N. Ma and C. L. Shen, {\it On the Christoffel-Minkowski problem of Firey's $p$-sum}, Calc. Var. Partial Differential Equations, {\bf 21} (2005), 137-155.

\bibitem{HH} H. X. Hu and S. Q. Zhou, {\it Brunn-Minkowski inequality for variational functional involving the $p$-Laplacian operator}, Acta. Math. Sci. Ser. B Engl. Ed., {\bf 29} (2009), 1143-1154.

\bibitem{HJ01} J. R. Hu and J. Q. Liu, {\it On the $L_p$ torsional Minkowski problem for $0<p<1$}, Adv. Appl. Math., {\bf 128} (2021), 102188.

\bibitem{HJ} J. R. Hu, J. Q. Liu and D. Ma, {\it A Gauss curvature flow to the Orlicz-Minkowski problem for torsional rigidity}, J. Geom. Anal., {\bf 32} (2022), 1-28.

\bibitem{HJ2} J. R. Hu and P. Zhang, {\it The functional Orlicz-Brunn-Minkowski inequality for $q$-torsional rigidity}, Mathematika, {\bf 69} (2023), 934-956.

\bibitem{HY} Y. Huang, E. Lutwak, D. Yang and G. Y. Zhang, {\it Geometric measures in the dual Brunn-Minkowski theory and their associated Minkowski problems}, Acta Math., {\bf 216} (2016), 325-388.

\bibitem{HY0} Y. Huang, C. Z. Song and L. Xu, {\it Hadamard variational formulas for $p$-torsion and $p$-eigenvalue with applications}, Geom. Dedicata, {\bf 197} (2018), 61-76.

\bibitem{HY1} Y. Huang, D. M. Xi and Y. M. Zhao, {\it The Minkowski problem in Gaussian probability space}, Adv. Math., {\bf 385} (2021), 1-36.

\bibitem{HUI} G. Huisken, {\it Flow by mean curvature of convex surfaces into spheres}, J. Differential Geom., {\bf 20} (1984), 237-266.

\bibitem{IMN} M. N. Ivaki, {\it Deforming a hypersurface by principal radii of curvature and support function}, Calc. Var. Partial Differential Equations, {\bf 58} (2019), 1-18.

\bibitem{JE0} D. Jerison, {\it Prescribing harmonic measure on convex domains}, Invent. Math., {\bf 105} (1991), 375-400.

\bibitem{JE} D. Jerison, {\it A Minkowski problem for electrostatic capacity}, Acta Math., {\bf 176} (1996), 1-47.

\bibitem{KR0} H. Kr\"{o}ner and J. Scheuer, {\it Expansion of pinched hypersurfaces of the Euclidean and hyperbolic space by high powers of curvature}, Math. Nachr., {\bf 292} (2019), 1514-1529.

\bibitem{KR} N. V. Krylov, {\it Nonlinear elliptic and parabolic equations of the second order}, Dordrecht, Holland: D. Reidel Publishing Company, 1987.

\bibitem{LN} N. Li and B. C. Zhu,  {\it The Orlicz-Minkowski problem for torsional rigidity}, J. Differential Equations, {\bf 269} (2020), 8549-8572.

\bibitem{LR} Q. R. Li, W. M. Sheng and X. J. Wang, {\it Flow by Gauss curvature to the Aleksandrov and dual Minkowski problems}, J. Eur. Math. Soc., {\bf 22} (2019), 893-923.

\bibitem{LJQ} J. Q. Liu, {\it The $L_p$-Gaussian Minkowski problem}, Calc. Var. Partial Differential Equations, {\bf 61} (2022), 1-23.

\bibitem{LYN} Y. N. Liu and J. Lu, {\it A flow method for the dual Orlicz-Minkowski problem}, Trans. Amer. Math. Soc., {\bf 373} (2020), 5833-5853.

\bibitem{LE0} E. Lutwak, {\it The Brunn-Minkowski-Firey theory I: Mixed volumes and the Minkowski problem}, J. Differential Geom., {\bf 38} (1993), 131-150.

\bibitem{LE01} E. Lutwak, D. Yang and G. Y. Zhang, {\it Sharp affine $L_p$ Sobolev inequalities}, J. Differential Geom., {\bf 62} (2002), 17-38.

\bibitem{LE011} E. Lutwak, D. Yang and G. Y. Zhang, {\it $L_p$ dual curvature measures}, Adv. Math, {\bf 329} (2018), 85-132.

\bibitem{LE02} E. Lutwak, D. M. Xi, D. Yang and G. Y. Zhang, {\it Chord measures in integral geometry and their Minkowski problems}, Comm. Pure Appl. Math., {\bf 77} (2024), 3277-3330.

\bibitem{NMA} M. A. Navarro and J. S$\acute{\textrm{a}}$nchez, {\it Sharp estimates of semistable radial solutions of $k$-Heissian equations}, Proc. R. Soc. Edinb. A, DOI:10.1017/prm.2019.14.

\bibitem{PU} P. Pucci and J. Serrin, {\it A general variational identity}, Indiana Univ. Math. J., {\bf 35} (1986), 681-703.

\bibitem{RE} R. C. Reilly, {\it On the Hessian of a function and the curvatures of its graph}, Michigan Math. J., {\bf 20} (1973), 373-383.

\bibitem{Sa} P. Salani, {\it Convexity of solutions and Brunn-Minkowski inequalities for Hessian equation in $\mathbb{R}^3$}, Adv. Math., {\bf 229} (2012), 1924-1948.

\bibitem{SC} R. Schneider, {\it Convex bodies: the Brunn-Minkowski theory}, Cambridge University Press, Cambridge, 2014.

\bibitem{UR} J. I. E. Urbas, {\it An expansion of convex hypersurfaces}, J. Differential Geom., {\bf 33} (1991), 91-125.

\bibitem{WXJ} X. J. Wang, {\it A class of fully nonlinear elliptic equations and related functionals}, Indiana Univ. Math. J., {\bf 43} (1994), 25-54.

\bibitem{WXJ1} X. J. Wang, {\it The $k$-Hessian equations}, Geometric Analysis and PDEs, {\bf 1} (2009), 177-252.

\bibitem{XJ} J. Xiao, {\it Prescribing capacitary curvature measures on planar convex domains}, J. Geom. Anal., {\bf 30} (2020), 2225-2240.

\bibitem{ZX0} X. Zhao and P. B. Zhao, {\it Flow by Gauss curvature to the Orlicz chord Minkowski problem}, Ann. Mat. Pura Appl., {\bf 203} (2024), 2405-2424.

\bibitem{ZX} X. Zhao and P. B. Zhao, {\it Flow by Gauss curvature to the Orlicz Minkowski problem for $q$-torsional rigidity}, (2024), arXiv: 2404.19266v2.

\bibitem{ZX1} X. Zhao and P. B. Zhao, {\it The dual Minkowski problem for $q$-torsional rigidity}, (2024), arXiv: 2411.00779v3.

\bibitem{ZX2} X. Zhao and P. B. Zhao, {\it The Minkowski problem for $k$-torsional rigidity}, (2025), arXiv: 2505.24494v2.

\bibitem{ZR} R. J. Zhang, {\it A curvature flow approach to $L_p$ Christoffel-Minkowski problem for $1<p<k+1$}, Results Math., {\bf 79} (2024), 1-22.

\bibitem{ZG} G. X. Zhu, {\it The centro-affine Minkowski problem}, J. Differential Geom., {\bf 101} (2015), 159-174.

}
\end{thebibliography}
\end{document}